\newcommand{\N}{\mathbb{N}}
\newcommand{\R}{\mathbb{R}}
\newcommand{\C}{\mathbb{C}}
\newcommand{\F}{\mathbb{F}}
\newcommand{\G}{\mathbb{G}}
\newcommand{\W}{\mathbb{W}}
\newcommand{\T}{\mathbb{T}}
\newcommand{\mc}{\mathcal}
\newcommand{\mf}{\mathfrak}
\newcommand{\ip}[2]{\langle #1,#2 \rangle}
\newcommand{\Tr}{\text{Tr}}
\newcommand{\Irr}{\text{Irr}}
\newcommand{\Pol}{\text{Pol}}
\newtheorem{thm}{Theorem}[section]
\newtheorem{cor}[thm]{Corollary}
\newtheorem{lem}[thm]{Lemma}
\newtheorem{prop}[thm]{Proposition}
\theoremstyle{definition}
\newtheorem{defn}[thm]{Definition}
\newtheorem{rem}[thm]{Remark}
\newtheorem{rems}[thm]{Remarks}
\newtheorem{notat}[thm]{Notation}
\newtheorem{assumption}[thm]{Assumption}
\numberwithin{equation}{section}
\begin{document}

\title[$L_p$-Representations]
{$L_p$-Representations of Discrete Quantum Groups} 

\date{\today}

\author{Michael Brannan$^1$ and Zhong-Jin Ruan$^2$}

\address{Michael Brannan and Zhong-Jin Ruan: Department of Mathematics, University  of Illinois at Urbana-Champaign, Urbana, IL 61801, USA}
\email{mbrannan@illinois.edu, ruan@illinois.edu}

\keywords{Quantum group, $L_p$-representation, property of rapid decay, Haagerup property, exotic completions.}
\thanks{2010 \it{Mathematics Subject Classification:}
\rm{Primary 46L65, 20G42; Secondary 46L54, 22D25}}
\thanks{$^1$ The first author was partially supported by an NSERC Postdoctoral Fellowship. $^2$ The second author was partially supported by the Simons Foundation.}

\begin{abstract}
Given a locally compact quantum group $\G$, we define and study representations and C$^\ast$-completions of the convolution algebra $L_1(\G)$ associated with various linear subspaces of the multiplier algebra $C_b(\G)$.  For discrete quantum groups $\G$, we investigate the left regular representation, amenability and the Haagerup property in this framework.  When $\G$ is unimodular and discrete, we study in detail the C$^\ast$-completions of $L_1(\G)$ associated with the non-commutative $L_p$-spaces $L_p(\G)$.  As an application of this theory, we characterize (for each $p \in [1,\infty)$) the positive definite functions on unimodular orthogonal and unitary free quantum groups $\G$ that extend to states on the $L_p$-C$^\ast$-algebra of $\G$.  Using this result, we construct uncountably many new examples of exotic quantum group norms for compact quantum groups. 
\end{abstract}
\maketitle

\section{Introduction} \label{section:intro}

In the theory of operator algebras, the C$^\ast$-algebras associated with locally compact groups play a prominent role, providing many interesting examples and phenomena which motivate the general theory.  Two particularly important C$^\ast$-algebras associated with any locally compact group $G$ are the \textit{full} and \textit{reduced} C$^\ast$-algebras, $C^*(G)$ and $C^*_{\lambda}(G)$, respectively.  In general, there always exists a natural surjective $\ast$-homomorphism ${\hat \pi}_\lambda:C^*(G) \to C^*_\lambda(G)$.  Moreover, it is known that $G$ is amenable if and only if $C^*(G)$ and $C^*_\lambda(G)$ are isomorphic, which happens if and only if ${\hat \pi}_\lambda$ is injective.  When $G$ is non-amenable, there generally exist many intermediate quotient C$^\ast$-algebras 
\[C^*(G) \to A \to C^*_\lambda(G).\] 

Note that both the full and reduced C$^\ast$-algebras of a locally compact group $G$ carry additional structure coming from the underlying group $G$.  More precisely, they are 
Hopf C$^\ast$-algebras in the sense of \cite{VaVan} and the corresponding quotient map ${\hat \pi}_\lambda:C^*(G) \to C^*_\lambda(G)$ is a morphism in the category of Hopf C$^\ast$-algebras.  Therefore, it is natural to investigate the existence and structure of the intermediate (or \textit{exotic}) Hopf C$^\ast$-algebra quotients $C^*(G) \to A \to C^*_\lambda(G)$. 

In a recent paper \cite{BrGu12}, Brown and Guentner made a significant contribution in this direction for discrete groups $G$.  They showed that for any ideal $D \subset \ell_\infty(G)$, one can construct a corresponding Hopf C$^\ast$-algebra $C^*_D(G)$--the \textit{$D$-C$^\ast$-algebra of $G$}--by completing the group algebra $\C[G]$ in a suitable C$^\ast$-norm constructed from $D$.  When $D =\ell_\infty(G)$, the corresponding ideal completion of $\C[G]$ is the full C$^\ast$-algebra $C^*(G)$ and when $D = C_c(G)$ (the ideal of finitely supported functions) or when $D=\ell_p(G)$ for $1 \le p \le 2$, one recovers the reduced C$^\ast$-algebra
$C^*_{\lambda}(G)$.  For intermediate ideals $C_c(G) \subsetneq D \subsetneq \ell_\infty(G)$  (e.g. $D = C_0(G)$ or $\ell_p(G)$ for $2 < p < \infty$), the resulting ideal completions $C^*_D(G)$ turn out to be quite mysterious and interesting intermediate Hopf C$^\ast$-algebras.  In particular, Brown and Guentner showed that for a free group $\F_k$ on $2 \le k < \infty$ generators, there exists a $p \in (2, \infty)$ such that the  intermediate Hopf C$^\ast$-quotient $C^*(\F_k) \to C^*_{\ell_p}(\F_k) \to C^*_\lambda (\F_k)$ is exotic.

In a follow-up paper \cite{Ok12}, Okayasu gave a detailed study of the structure of  $C^*_{\ell_p}(\F_k)$.  The main result in \cite{Ok12} (which was also obtained  independently by Higson and Ozawa) is that for all $2 < p < \infty$, the Hopf C$^\ast$-algebras $C^*_{\ell_p}(\F_k)$ are mutually non-isomorphic (as Hopf C$^\ast$-algebras).  Okayasu obtains this result by first showing that $\F_k$ satisfies a suitable $\ell_q$ version of the property of rapid decay for $1 \le q \le 2$ with respect to the standard conditionally negative definite word length function on $\F_k$.  This $\ell_q$ property of rapid decay is then used to characterize, for each $2 \le p < \infty$, the positive definite functions on $\F_k$ that extend to states on $C^\ast_{\ell_p}(\F_k)$ \cite[Theorem 3.4]{Ok12}.  The non-isomorphism result follows from this characterization.  Another remarkable consequence of this characterization is that each  C$^\ast$-algebra $C^*_{\ell_p}(\F_k)$ admits a unique trace \cite[Corollary 3.9]{Ok12}.   

In this paper, our goal is to extend the theory of ideal completions to the general context of locally compact quantum groups.  More generally, given a locally compact quantum group $\G$ and a linear subspace $D$ of the multiplier C$^\ast$-algebra of ``bounded continuous functions'' $C_b(\G) = M(C_0(\G))$, we define and study \textit{$D$-representations} of $\G$.  Roughly speaking, these are representations $\pi:L_1(\G)\to \mc B(H)$ with the property that for each pair of vectors $\xi,\eta$ in a dense subspace of $H$,  the corresponding coefficient function $\varphi^\pi_{\xi,\eta}$ belongs to $D$.   If the subspace $D$ is sufficiently large, one can then define a C$^\ast$-norm $\|\omega\|_D = \sup_\pi\{\|\pi(\omega)\|\}$ on $L_1(\G)$, where the supremum runs over all $D$-representations $\pi$.  The closure $C^*_D(\G)$ of $L_1(\G)$ in this new norm is called the \textit{$D$-C$^\ast$-algebra of $\G$}.

With an eye towards studying concrete examples, we focus for a large part of this paper on discrete quantum groups.  In the discrete case, we generalize many of the known results for $D$-C$^\ast$-algebras of discrete groups. In particular we show that whenever $D$ is a subalgebra of $C_b(\G)$ containing $C_c(\G)$ (the algebra of ``finitely supported functions on $\G$'') then $C^*_D(\G)$ carries a coproduct $\hat \Delta_D$ making it a compact quantum group C$^\ast$-algebra (Proposition \ref{prop:cqg_structure}).  Using Pontryagin duality, we can interpret $C^*_D(\G)$ as a C$^\ast$-completion of the algebra $\Pol(\hat \G)$ of  polynomial functions on the dual compact quantum group $\hat \G$ and therefore Proposition \ref{prop:cqg_structure} provides a novel approach to the construction of exotic quantum group norms for compact quantum groups in the sense of Kyed and Soltan \cite{KySo}.   Pursuing this avenue (with inspiration from the work of Brown-Guentner and Okayasu), we consider unimodular discrete quantum groups and their corresponding non-commutative $L_p$-spaces $L_p(\G)$.  We show that in the quantum realm, $L_p$-C$^\ast$-algebras of unimodular discrete quantum groups yield new and interesting examples of exotic compact quantum group norms.  More precisely, we study in detail Van Daele and Wang's unimodular orthogonal free quantum groups (see \cite{VaWa}) and prove the following theorem. 

\begin{thm}[Theorems \ref{thm:free_orthogonal} and \ref{thm:uniquetrace} and Remark \ref{rem:exotic}] \label{thm:exotic}
Let $N\ge 3$, $F \in \text{GL}_N(\C)$ a multiple of a unitary matrix with $F\bar F \in \R 1$, and let $\F O_F$ be the corresponding unimodular orthogonal free quantum group with dual compact quantum group $O^+_F = \widehat {\F O_F}$.
\begin{enumerate}
\item  For each $2 <p < \infty$, the canonical sequence of quotient maps 
 \[C_u(O^+_F) \to C^*_{L_{p}(\F O_F)}(\F O_F) \to C(O^+_F) \]
from the universal quantum group C$^\ast$-algebra $C_u(O^+_F)$ to the reduced quantum group C$^\ast$-algebra $C(O^+_F)$ (extending the identity map on $L_1(\F O_F)$) are not injective.  In particular, each $L_p$-C$^\ast$-algebra norm defines an exotic quantum group norm on $\Pol(O^+_F)$ in the sense of Kyed and Soltan \cite{KySo}. 
\item For each $2 < p \ne p' < \infty$, the Hopf C$^\ast$-algebras $C^*_{L_{p}(\F O_F)}(\F O_F)$ and $C^*_{L_{p'}(\F O_F)}(\F O_F)$ are not isomorphic (as Hopf C$^\ast$-algebras).  Moreover, $C^*_{L_{p}(\F O_F)}(\F O_F)$ is not isomorphic (as an abstract C$^\ast$-algebra) to either $C_u(O^+_F)$ or $C(O^+_F)$.
\item For each $2 \le p < \infty$,  the C$^\ast$-algebra $C^*_{L_{p}(\F O_F)}(\F O_F)$ admits a unique trace.
\end{enumerate}
\end{thm}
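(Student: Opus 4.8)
The plan is to transplant the Brown--Guentner--Okayasu program from the free groups $\F_k$ to the quantum group $\F O_F$, exploiting the fact that $O^+_F = \widehat{\F O_F}$ has the representation theory of $SU(2)$: when $F\bar F \in \R 1$ the irreducible corepresentations $(u_n)_{n\ge 0}$ are indexed by $\N$ with the $SU(2)$ fusion rules, and their dimensions are $d_n = [n+1]_q$, where $q \in (0,1)$ is the deformation parameter determined by $F$ (for $N \ge 3$ the $d_n$ grow exponentially). Unimodularity means $d_n$ is also the size of the $n$-th matrix block, so on $L_\infty(\F O_F) = \ell^\infty\text{-}\bigoplus_n M_{d_n}(\C)$ the Plancherel trace weights the $n$-th block by $d_n$. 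I would organize the proof into three stages: (i) an $L_q$-version of the property of rapid decay for $\F O_F$ relative to the length $\ell(u_n)=n$; (ii) the consequent characterization of the positive-definite functions on $\F O_F$ extending to states on $C^*_{L_p(\F O_F)}(\F O_F)$, which is the content of Theorem~\ref{thm:free_orthogonal}; and (iii) the deduction of parts (1)--(3), using Proposition~\ref{prop:cqg_structure} to regard each $C^*_{L_p(\F O_F)}(\F O_F)$ as a compact quantum group C$^\ast$-algebra.

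For stage (i) I would start from the known property of rapid decay of the unimodular $\F O_F$ (due to Vergnioux) with respect to $\ell$, and upgrade it to the estimate actually needed: for an exponent $q \in [1,2]$ (the conjugate of the target $p$) and $x \in \Pol(O^+_F)$ supported on corepresentations of length at most $m$, one bounds the reduced norm $\|x\|_{C(O^+_F)}$ by a polynomial in $m$ times $\|x\|_{L_q(\F O_F)}$. Because a state on $C^*_D(\F O_F)$ is controlled by its coefficient functions lying in $D$, and because traciality of the Haar state lets one reduce questions of extendibility to \emph{central} functions $\varphi = (\varphi_n)_{n\ge 0}$, this $L_q$-estimate converts membership of $\varphi$ in $L_p(\F O_F)$ into a uniform bound over all $L_p$-representations. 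Carrying this out should yield Theorem~\ref{thm:free_orthogonal} in the form: a central positive-definite $\varphi$ extends to a state on $C^*_{L_p(\F O_F)}(\F O_F)$ if and only if $\varphi \in L_p(\F O_F)$, which for central functions reads $\sum_{n \ge 0} d_n^2\,|\varphi_n|^p < \infty$ (the factor $d_n^2$ being the Plancherel weighting above). I expect stages (i)--(ii) to be the main obstacle: the exponential growth of $d_n$ forces one to track the rapid-decay constants and the non-commutative $L_p$-estimates very carefully, and the reduction to central functions must be justified at the von Neumann algebra level rather than pointwise.

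Granting Theorem~\ref{thm:free_orthogonal}, the three parts follow by exploiting how the $L_p$-summability condition varies with $p$. For (1), the counit $\varepsilon$ corresponds to $\varphi_n \equiv 1$, for which $\sum_n d_n^2 = \infty$; hence $\varepsilon$ does not factor through $C^*_{L_p(\F O_F)}(\F O_F)$, so $C_u(O^+_F) \to C^*_{L_p(\F O_F)}(\F O_F)$ is not injective. On the other side, using the explicit families of central positive-definite functions available on $\F O_F$ (such as those witnessing its Haagerup property) one produces a $\varphi$ with $\sum_n d_n^2|\varphi_n|^p < \infty$ but $\sum_n d_n^2|\varphi_n|^2 = \infty$; this extends to a state on $C^*_{L_p(\F O_F)}(\F O_F)$ but not on $C(O^+_F) = C^*_{L_2(\F O_F)}(\F O_F)$, so $C^*_{L_p(\F O_F)}(\F O_F) \to C(O^+_F)$ is not injective. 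Since both quotient maps are proper, each $L_p$-norm is strictly between the universal and reduced norms on $\Pol(O^+_F)$, which is exactly the exotic-norm statement of Remark~\ref{rem:exotic}.

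For (2), the set of positive-definite functions that extend to states is an invariant of the Hopf C$^\ast$-algebra, and by the characterization this set is precisely the $L_p$-class. Since $O^+_F$ is rigid, any Hopf C$^\ast$-isomorphism $C^*_{L_p(\F O_F)}(\F O_F) \to C^*_{L_{p'}(\F O_F)}(\F O_F)$ restricts to a Hopf $\ast$-automorphism of the canonical dense core $\Pol(O^+_F)$; such an automorphism permutes the irreducibles preserving fusion and dimension, hence fixes the length grading $n \mapsto d_n$ and therefore preserves the summability condition $\sum_n d_n^2|\varphi_n|^{p}$. As the $L_p$- and $L_{p'}$-classes of positive-definite functions genuinely differ for $p \ne p'$ (by a separating $\varphi$ as above), this is impossible, giving the Hopf non-isomorphism. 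For the stronger abstract C$^\ast$-statement I would use the trace structure: by part (3), $C^*_{L_p(\F O_F)}(\F O_F)$ has a unique trace, whereas $C_u(O^+_F)$ carries at least two (the counit $\varepsilon$ and the Haar state $h$), so $C^*_{L_p(\F O_F)}(\F O_F) \not\cong C_u(O^+_F)$; and although $C(O^+_F)$ also has a unique trace, its (Haar) trace is faithful, while the unique trace on $C^*_{L_p(\F O_F)}(\F O_F)$ has kernel equal to the nonzero kernel of $C^*_{L_p(\F O_F)}(\F O_F) \to C(O^+_F)$ from part (1), hence is non-faithful, so the two are not isomorphic as C$^\ast$-algebras. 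Finally, for (3), a tracial state on $C^*_{L_p(\F O_F)}(\F O_F)$ restricts to a \emph{central} positive-definite function, which by Theorem~\ref{thm:free_orthogonal} lies in $L_p(\F O_F)$; traciality together with the $SU(2)$ fusion rules and the $L_q$-rapid-decay estimate forces this function to factor through the regular representation, where the Haar state is the unique trace, so the trace must be the Haar state (Theorem~\ref{thm:uniquetrace}).
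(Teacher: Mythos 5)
Your overall architecture (an $L_q$-property of rapid decay for $\F O_F$, then a characterization of the positive definite functions extending to states on $C^*_{L_p}(\F O_F)$, then the deductions) is exactly the paper's, and your deductions of parts (1) and (2) are essentially sound in outline --- including your alternative routes to the abstract non-isomorphisms via trace-counting and faithfulness of the unique trace, which are legitimate substitutes for the paper's ``no characters'' and ``simplicity of $C(O^+_F)$'' arguments. But there are two genuine problems. First, your stated characterization --- ``a central positive definite $\varphi$ extends to a state on $C^*_{L_p}(\F O_F)$ if and only if $\varphi\in L_p(\F O_F)$, i.e.\ $\sum_n d_n^2|\varphi_n|^p<\infty$'' --- is false in the ``only if'' direction, and it is not what the rapid decay estimate yields. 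Pairing $\varphi$ against $x=p_n|\varphi|^{p-2}\varphi^*$ and applying the Haagerup inequality gives $\|p_n\varphi\|_p^p\le D_N(n+1)\|p_n\varphi\|_p^{p-1}$, i.e.\ only the linear growth bound $\sup_n(n+1)^{-1}\|p_n\varphi\|_p<\infty$; the correct characterization (Theorem \ref{thm:characterization_p_continuity}) is this growth condition, equivalently that $r^\ell\varphi\in L_p(\F O_F)$ for all $r\in(0,1)$, which is strictly weaker than $\varphi\in L_p(\F O_F)$ (already for $p=2$ there are positive definite functions in $B_\lambda(\F O_F)$ with $\|p_n\varphi\|_2\sim n$, hence not square-integrable). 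Your separating examples $\varphi_{r_0}\sim r_0^\ell$ decay exponentially, so for them the two conditions happen to coincide and parts (1)--(2) survive once the characterization is corrected, but as stated your key intermediate theorem is wrong.

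Second, your proof of part (3) is missing its essential mechanism. The assertion that ``traciality together with the $SU(2)$ fusion rules and the $L_q$-rapid-decay estimate forces this function to factor through the regular representation'' is not an argument: a hypothetical tracial state corresponds to a central positive definite $\varphi$ satisfying only the linear growth bound, and nothing you have written rules out $\varphi_n\ne 0$ for $n\ge 1$. The paper's proof requires the ``conjugation by generators'' map $\Phi(\omega)=\frac{1}{2N}\sum_{i,j}(\omega_{ij}\star\omega\star\omega_{ij}^\sharp+\omega_{ij}^\sharp\star\omega\star\omega_{ij})$, the interpolated contraction estimate $\|\Phi\|_{L_{q,0}\to L_{q,0}}\le C(N)^{2(1-1/q)}<1$ (interpolating between $\|\Phi\|_{L_{1,0}\to L_{1,0}}\le 1$ and the Vaes--Vergnioux $L_2$ bound), and then the Haagerup inequality of Corollary \ref{cor:Lqestimate_operatornorm} to convert the $L_q$-contraction into convergence $\|\Phi^k(\omega_x)-\langle\omega_x,p_0\rangle\omega_{p_0}\|_{C^*_p(\F O_F)}\to 0$, beating the polynomial factor $(n(x)+2k+1)^{1+1/p}$ by the geometric decay $C(q,N)^k$. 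Since every trace is $\Phi$-invariant, this forces it to equal the Haar state. Without this (or an equivalent Powers-type averaging argument) part (3) is unproved, and your trace-based proofs of $C^*_p\not\cong C_u(O^+_F)$ and $C^*_p\not\cong C(O^+_F)$ in part (2) inherit the gap; they can be repaired independently by the counit/no-characters argument and by the simplicity of $C(O^+_F)$, respectively.
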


The results of the above theorem precisely parallel what is known about the $\ell_p$-C$^\ast$-algebras of free groups from \cite{Ok12}.  Moreover, our general strategy for proving the above results is inspired by \cite{Ok12}: We first prove a non-commutative $L_q$-version of the property of rapid decay for the discrete quantum groups $\F O_F$ (Proposition \ref{prop:qHaagerup}) and use this geometric-analytic property in conjunction with the Haagerup property for $\F O_F$ \cite{BrAP, DeFrYa} to characterize the positive definite functions on $\F O_F$ that extend to states on the corresponding $L_p$-C$^\ast$-algebra (Theorem \ref{thm:characterization_p_continuity}).  The estimates given by our $L_q$-property of rapid decay also enable us to show, using a quantum ``conjugation by generators'' technique, that these C$^\ast$-algebras admit a unique tracial state.  We point out, however, that the non-commutative nature of the spaces $L_p(\G)$ in the quantum setting make some of the required techniques quite different from the corresponding ones for discrete  groups.  In particular, we are required to make extensive use of complex interpolation techniques and some notions from the theory of operator spaces to eventually arrive at Theorem \ref{thm:exotic}.  

Using the above tools developed to study the $L_p$-C$^\ast$-algebras of $\F O_F$, we also investigate Van Daele and Wang's unimodular unitary free quantum groups $\F U_F$ \cite{VaWa}, and obtain similar results there (see Corollary \ref{cor:exoticU}).  

Along the way to proving the above theorems , we also derive several general  results on $D$-C$^\ast$-algebras of locally compact quantum groups which are of independent interest.  In particular, we characterize the Haagerup property of a  locally compact quantum group $\G$ in terms of its $C_0(\G)$-C$^\ast$-algebra (Theorem \ref{thm:HAP}).  We characterize the amenability of a unimodular discrete quantum group $\G$ in terms of the canonical quotient map from the universal C$^\ast$-algebra of $\G$ to $C^*_{L_p(\G)}(\G)$ being isometric for some finite $1  < p < \infty$ (Theorem \ref{thm:amenability}).   We also introduce and develop some basic properties of the $D$-Fourier and $D$-Fourier-Stieltjes algebras $A_D(\G)$ and $B_D(\G)$ associated with a locally compact quantum group $\G$.  These are natural ``$D$-analogues'' of the usual Fourier and Fourier-Stieltjes algebras for locally compact groups introduced in \cite{Ey}.  We note here that similar objects were considered in the context of ideal completions for locally compact groups in the work of Kaliszewski, Landstad and Quigg \cite{KaLaQu}.
          
This paper is organized as follows.  Section \ref{section:prelims} contains a brief review of some aspects of the theory of locally compact quantum groups and their Hilbert space representations.  Section \ref{section:dreps} introduces the notion of a $D$-representation of a locally compact group $\G$ and the corresponding $D$-C$^\ast$-algebra of $\G$, where $D$ is any linear subspace of $C_b(\G)$.  Here we introduce the $D$-Fourier(-Stieltjes) algebras, prove that $C^*_D(\G)$ carries the structure of a compact quantum group whenever $\G$ is discrete and $D$ is a subalgebra of $L_\infty(\G)$, and study the Haagerup property of locally compact quantum groups in terms of $C_0(\G)$-representations.  In Section \ref{section:unimodular}, we restrict our attention to unimodular discrete quantum groups and study the ideals $L_p(\G)$  and the corresponding $L_p$-C$^\ast$-algebras.  In the final Section \ref{section:applications}, we recall some facts about the (unimodular) free orthogonal/unitary quantum groups and then perform a detailed analysis of their $L_p$-C$^\ast$-algebras. 

\subsection*{Acknowledgements}  The authors wish to thank Quanhua Xu for fruitful conversations related to complex interpolation at an early stage of this work and to thank the referee for helpful suggestions.

\section{Preliminaries} \label{section:prelims}

In the following, we write $\otimes$ for the minimal tensor product of C$^\ast$-algebras or  tensor product of Hilbert spaces, $\overline{\otimes}$ for the spatial tensor product of von Neumann algebras and $\odot$ for the algebraic tensor product.  All inner products are taken to be conjugate-linear in the second variable.

\subsection{Locally compact quantum groups}

Let us first recall from  \cite{KuVa1} and \cite{KuVa2} that 
a  (von Neumann algebraic) \textit{locally compact quantum group} is 
a   quadruple $\G = (M,\Delta, h_L, h_R)$, where 
$M$ is a von Neumann algebra,  $\Delta:M \to M\overline{\otimes}M$ is a co-associative coproduct, i.e. a 
unital normal $\ast$-homomorphism such that 
\[
(\iota \otimes \Delta) \circ \Delta = (\Delta \otimes \iota) \circ \Delta,
\]
and $h_L$ and $h_R$ are normal faithful semifinite weights on $M$ such that 
\[ 
 (\iota  \otimes h_L)\Delta(x)  = h_L(x) 1 \quad \mbox{and}  \quad
 (h_R \otimes \iota)\Delta(x)  = h_R(x) {1}
\qquad (x\in M^+). 
\]
We call $h_L$ and $h_R$ the 
\textit{left Haar weight} and the \textit{right Haar weight} of $\G$, respectively, and 
we write  $L_\infty(\G) $  for the quantum group von Neumann algebra $M$.

Associated with  each  locally compact quantum group $\G$,  there is a \emph{reduced} quantum group 
 C*-algebra
 $C_0(\G)\subseteq L_\infty(\G)$  with coproduct  
  \[
 \Delta: x\in C_0(\G) \mapsto \Delta(x) \in M(C_0(\G)\otimes C_0(\G)) \subseteq L_\infty(\G) \overline {\otimes} L_\infty(\G).
 \]
Here we let  $M(B)$ denote the \emph{multiplier algebra } of a C*-algebra $B$.
Therefore, $(C_0(\G), \Delta)$ together with $h_L$ and $h_R$ restricted to $C_0(\G)$  is a   
C*-algebraic  locally compact quantum group.

Using the left Haar weight $h_L$, we can  apply the GNS construction to obtain an inner product  
\[
\langle {\Lambda(x) | \Lambda(y)} \rangle = h_L(y^* x)
\]
on  $ \mf N_{h_L}=  \{x \in L_\infty(\G) : h_L(x^*x) < \infty \}$, 
and thus obtain  a Hilbert space $L_2(\G)= L_2(\G, h_L)$.  Here, $\Lambda: \mf N_{h_L} \to L_2(\G)$ is the canonical injection.
The quantum group von Neumann algebra  $L_\infty(\G)$ is standardly represented on $L_2(\G)$ via  the unital normal
$\ast$-homomorphism $\pi : L_\infty(\G) \to \mc B(L_2(\G))$ satisfying  $\pi(x)\Lambda(y) = \Lambda(xy)$.
There exists a  (left) \textit{fundamental unitary operator } $W$ on $L_2(\G)\otimes L_2(\G)$, 
which satisfies the pentagonal relation
\[
W_{12} W_{13} W_{23} = W_{23} W_{12}.
\]
Here, and later,
we use the standard leg number notation: $W_{12} = W\otimes I, W_{13}
= \Sigma_{23} W_{12} \Sigma_{23}$ and $W_{23} = 1 \otimes W$, where
 $\Sigma: L_2(\G) \otimes L_2(\G)  \to L_2(\G) \otimes L_2(\G) $ is the flip map.
In this case, the coproduct  $\Delta$ on $L_\infty(\G)$ can be expressed as $\Delta(x) = W^*(1\otimes x)W$.

Let  $L_1(\G) = L_\infty(\G)_*$ be the predual of $L_\infty(\G)$.
Then the   pre-adjoint  of  $\Delta$  induces on $L_1(\G)$ a completely contractive Banach algebra product
\[
\star = \Delta_*: \omega \otimes \omega' \in L_1(\G) \widehat \otimes L_1(\G) \mapsto \omega \star \omega' = (\omega \otimes \omega' ) \Delta \in L_1(\G),
\]
where we let  $\widehat \otimes$ denote the operator space projective tensor product.
Using the Haar weights, one can construct an antipode $S$ on $L_\infty(\G)$ satisfying $(S \otimes \iota)W = W^*$. Since, in general, $S$ is 
unbounded on $L_\infty(\G)$, we  can not use $S$ to define an involution on $L_1(\G)$.
However, we can consider a dense subalgebra $L_1^\sharp (\G)$ of $L_1(\G)$, which is defined to be 
the collection of all  $\omega\in L_1(\G)$ such that there exists $\omega^\sharp \in L_1(\G)$ with
$\ip{\omega^\sharp}{x} = \overline{ \ip{\omega}{S(x)^*} }$ for each $x\in \text{Dom}(S)$.
It is known from  \cite{Ku} and \cite[Section~2]{KuVa2} that 
 $L_1^\sharp(\G)$ is an involutive Banach algebra with  involution  $\omega\mapsto\omega^\sharp$
 and norm $\|\omega\|_\sharp = \mbox{max}\{\|\omega\|, \|\omega^\sharp\|\}$.

\subsection{Representations of locally compact quantum groups} 

A \emph{representation} $(\pi, H)$ of a locally compact quantum group $\G$ is  a non-degenerate completely contractive homomorphism  $\pi: L_1(\G) \to \mc B(H)$   whose restriction to $L_1^\sharp(\G)$  is a  $\ast$-homomorphism.
It is shown in \cite[Corollary 2.13]{Ku}  that each representation $(\pi, H)$ of $\G$ corresponds uniquely to a unitary operator $U_\pi$ 
 in $M(C_0(\G) \otimes \mc K(H))\subseteq L_\infty (\G) \overline{\otimes} \mc B(H)$ such that 
 \[
 (\Delta \otimes \iota)  U_\pi =  U_{\pi, 13}  U_{\pi, 23}.
 \]
The correspondence  is given by 
\[
\pi(\omega)  = (\omega \otimes \iota) U_\pi \in  \mc B(H) \qquad (\omega \in L_1(\G)).
\]  
We call $U_\pi $ the \emph{unitary representation} of $\G$ associated with $\pi$. 

Given two representations $(\pi,H_\pi) $ and $ (\sigma ,H_\sigma)$, we can obtain new representations by forming their tensor product and direct sum.  The unitary operator
\[ 
U_{\pi \otop \sigma}  :=  U_{\pi,12} U_{\sigma,13} \in M(C_0(\G) \otimes\mc K(H_\pi \otimes H_\sigma))
\]  
determines a representation 
\[
\pi \otop  \sigma: \omega \in  L_1(\G) \mapsto \mc (\omega \otimes \iota)U_{\pi \otop \sigma}  \in \mc B(H_\pi \otimes H_\sigma).
\]
We call the representation $({\pi \otop \sigma}, H_\pi \otimes H_\sigma)$  the  \textit{tensor product}  of $\pi$ and $\sigma$.
The \emph{direct sum}  $(\pi\oplus \sigma, H_\pi \oplus H_\sigma)$ of $\pi$ and $\sigma$ is given by the representation
\[
\pi\oplus \sigma : \omega\in L_1(\G) \to \pi(\omega) \oplus \sigma(\omega)\in \mc B(H_\pi \oplus H_\sigma).
\]
We often use the notion $U_\pi \oplus U_\sigma$ for the corresponding unitary operator $U_{\pi \oplus \sigma}$.

For representations $(\pi,H)$ of $\G$,  one of course has all of the usual notions of (ir)reducibility, intertwiners, unitary equivalence etc. that can be considered for representations of general Banach algebras. 
 
 The \emph{left regular representation} is defined by 
 \[
 \lambda : \omega \in L_1(\G) \mapsto (\omega \otimes \iota )W\in \mc B(L_2(\G)).
 \]
This is an injective  completely contractive  homomorphism from $L_1(\G)$ into $\mc B(L_2(\G))$ such that 
$\lambda(\omega^\sharp) = \lambda(\omega)^*$ for all $\omega\in L_1^\sharp(\G)$.
Therefore,  $\lambda$ is a $*$-homomorphism when restricted to $L_1^\sharp(\G)$.
The   norm closure of $\lambda(L_1(\G))$ gives us the reduced   quantum group  C$^\ast$-algebra $C_0(\hat \G)$  and the $\sigma$-weak closure of
$\lambda(L_1(\G))$ gives us the quantum group  von Neumann algebra $L_\infty(\hat \G)$.  
 There is a coproduct on $L_\infty(\hat \G)$ given by 
 \[
\hat{\Delta}:  \hat x \in L_\infty(\hat \G) \to \hat\Delta(\hat x) = \hat W^*(1\otimes \hat x) \hat W \in 
L_\infty(\hat \G)\overline{\otimes} L_\infty(\hat \G),
\]
 where   $\hat W = \Sigma W^*\Sigma$.
We can find 
Haar weights $\hat h_L$ and $  \hat h_R$ to turn $\hat \G = (L_\infty(\hat \G) , \hat \Delta, \hat h_L, \hat h_R)$
 into a locally compact quantum group -- the dual quantum group to $\G$.  
 Repeating  this argument for the dual quantum group $\hat \G$ , we get  the left regular representation 
 \[
 \hat \lambda : \hat\omega \in L_1(\hat \G) \mapsto (\hat \omega \otimes \iota )\hat W=  (\iota \otimes \hat \omega )W^*
 \in \mc B(L_2(\G)).
 \]
 It turns out that   $C_0(\G)$ and   $L_\infty(\G)$ are just 
  the norm and $\sigma$-weak closure  of  $\hat \lambda(L_1(\hat \G))$ in $\mc B(L_2(\G))$, respectively.
 This gives us   the  quantum group version of Pontryagin duality $\hat{\hat{\G}}= \G$.
 

There is  a \emph{universal representation} 
 $\pi_u: L_1(\G) \to \mc B(H_u)$ and
 we   obtain   the  {\it   universal quantum group C$^\ast$-algebra}   $C_u(\hat \G) = \overline{\pi_u(L_1(\G))}^{\|\cdot\|}$.  
 By the universal property,   every representation $\pi: L_1(\G) \to \mc B(H)$  uniquely corresponds to a surjective 
  $\ast$-homomorphism $\hat \pi$ from $C_u(\hat \G)$
  onto the C*-algebra $C_\pi(\hat \G) = \overline{\pi(L_1(\G))}^{\|\cdot\|}$ such that $\pi = \hat \pi \circ \pi_u .$
  In particular,   the left regular representation  $(\lambda, L_2(\G))$ uniquely determines a surjective $\ast$-homomorphism $\hat \pi_\lambda$ from $C_u(\hat \G)$ onto  $ C_0(\hat \G)$.  Considering the duality, we can obtain the universal quantum group C$^\ast$-algebra $C_u(\G) $ and we denote by $ \pi_{\hat \lambda} : C_u(\G) \to C_0(\G)$ the canonical surjective $\ast$-homomorphism.  As shown in \cite{Ku}, $C_u(\G)$ admits a coproduct $\Delta_u:C_u(\G) \to M(C_u(\G) \otimes C_u(\G))$ which turns $(C_u(\G), \Delta_u)$ into a C$^\ast$-algebraic locally compact quantum group with left and right Haar weights given by $h_L \circ \pi_{\hat \lambda}$ and $h_R \circ \pi_{\hat \lambda}$, respectively.    
A locally compact quantum group $\G$ is called \textit{co-amenable} if $\pi_{\hat \lambda}$ is an isomorphism, and $\G$ is called \textit{amenable} if
$L_\infty(\G)$ admits a \textit{left invariant mean}, i.e.  there is a state $m \in L_\infty(\G)^*$ such that 
\[m\big((\omega \otimes \iota)\Delta(x)\big) = \omega(1) m(x)  \qquad (\omega \in L_1(\G), \ x \in L_\infty(\G)).
\]   

It is shown in \cite{Ku} that the fundamental unitary $W$ of $\G$ admits a ``semi-universal'' version $\text{\reflectbox{$\W$}}\in M(C_0(\G) \otimes C_u(\hat \G))$ which has the property that for each representation $\pi: L_1(\G) \to \mc B(H)$, the corresponding unitary operator $U_\pi$ can be expressed as   
\[(\iota \otimes \hat \pi)\text{\reflectbox{$\W$}} = U_\pi.
\] 
Moreover, $\text{\reflectbox{$\W$}}$ satisfies  the following relations
\begin{align} \label{bichar_relation}
(\Delta \otimes \iota) \text{\reflectbox{$\W$}} = \text{\reflectbox{$\W$}}_{13}\text{\reflectbox{$\W$}}_{23} ~\mbox{and} ~ 
(\iota \otimes \hat \Delta_u) \text{\reflectbox{$\W$}} = \text{\reflectbox{$\W$}}_{13} \text{\reflectbox{$\W$}}_{12}.
\end{align} 

  
\subsection{Compact and discrete quantum groups}  A locally compact quantum group is called 
\textit{compact} if $C_0(\G)$ is unital, or equivalently $h_L = h_R$ is a bi-invariant state (after an appropriate normalization).  In this case, we will write $C(\G)$ for $C_0(\G)$.
 We say $\G$ is \textit{discrete} if $\hat \G$ is a compact quantum group, or equivalently, if $L_1(\G)$ is unital.  Note that a discrete quantum group is always co-amenable, and compact quantum groups are always amenable.
In particular, we always have $C_u(\G) = C_0(\G)$ for discrete quantum groups.

Let $\G$ be a discrete quantum group.  Denote by $\Irr(\hat \G)$ the collection of equivalence classes of finite dimensional irreducible representations of $\hat \G$.  For each $\alpha \in \Irr(\hat  \G)$, select a representative unitary representation $(\hat U^\alpha,H_\alpha)$.  Then $\hat U^\alpha$ can be identified with a unitary matrix 
\[
\hat U^\alpha = [\hat u_{ij}^\alpha] \in M_{d_\alpha}( C(\hat \G)),
\]
where $d_\alpha = \dim H_\alpha$.  The linear subspace $\Pol(\hat \G) \subseteq C(\hat \G)$ spanned by $\{\hat u_{ij}^\alpha : \alpha \in \Irr(\hat \G), \ 1 \le i,j \le d_\alpha\}$ is a dense Hopf-$\ast$-subalgebra  of $C(\hat \G)$.  The  coproduct on $\Pol(\hat \G)$ is given by  restricting $\hat \Delta $ to $\Pol(\hat \G)$, and we have 
\[
\hat \Delta  (\hat u^\alpha_{ij} ) = \sum_{k=1}^{d_\alpha} \hat u^\alpha_{ik} \otimes \hat u^\alpha_{kj}.
\]
We call $\Pol(\hat \G)$ the \textit{algebra of polynomial functions on $\hat \G$}.  

The Haar state $\hat h =  \hat h_L = \hat h_R$ is always faithful when restricted
to $\Pol(\hat \G)$, and  $C_u(\hat \G)$ can be identified with the universal enveloping C$^\ast$-algebra of $\Pol(\hat \G)$.  This allows us to regard $\Pol(\hat \G)$ as a dense $\ast$-subalgebra of $C_u(\hat \G)$, and the semi-universal fundamental unitary of 
the discrete quantum group $\G$ is given by 
\begin{align} \label{bichar_compact} \text{\reflectbox{$\W$}} = \bigoplus_{\alpha \in \Irr(\hat \G)} (\hat U^\alpha)^* =  \bigoplus_{\alpha \in \Irr(\hat \G)} \sum_{1 \le i,j \le d_\alpha} e_{ij}^{\alpha}  \otimes \hat  u_{ji}^{\alpha \ast}  \in M(C_0( \G) \otimes C_u(\hat \G)). 
\end{align}  
In general,  for any discrete quantum group $\G$, we have
\[C_0(\G) =c_0 - \bigoplus_{\alpha \in \Irr(\hat \G)} \mc B(H_\alpha)  \quad \mbox{and} \quad L_\infty(\G) = \prod_{\alpha \in \Irr(\hat \G)} \mc B(H_\alpha).
\]  
Denote by $C_c(\G)$ the (algebraic) direct sum $\bigoplus_{\alpha \in \Irr(\hat \G)} \mc B(H_\alpha)$.  Then 
$C_c(\G)$  forms a common core for the Haar weights $h_L$ and $h_R$.   

For each $\alpha \in \Irr(\hat \G)$, denote by $p_\alpha \in L_\infty(\G)$ the minimal central projection whose support is $\mc B(H_\alpha)$.  
Let $\Tr_{\alpha}$
be the canonical trace  on  $\mc B(H_\alpha)$ (with $\Tr_{\alpha}(1) = d_\alpha$).
One can then find positive invertible matrices $Q_\alpha \in \mc B(H_\alpha)$ which satisfy 
\[
m_\alpha =\Tr_\alpha(Q_\alpha) = \Tr_\alpha(Q_\alpha^{-1}) \quad \mbox{and} \quad 
\Delta(Q_\alpha) = Q_\alpha \otimes Q_\alpha.
\]  The left and right  Haar weights on $\G$ are given by the formulas 
 \begin{align} \label{eqn:weights}
 h_L(p_\alpha x)= m_\alpha \Tr_\alpha(Q_\alpha^{-1} p_\alpha x)  \quad \mbox{and} 
  \qquad h_R(p_\alpha x) = m_\alpha \Tr_\alpha(Q_\alpha p_\alpha x). 
\end{align}  See \cite[Equations (2.12)-(2.13)]{PoWo}.
Using the left Haar weight $h_L$ (and the fact that $L_\infty(\G)$ is a direct product of full matrix algebras), we have 
\[
L_2(\G) = \ell_2- \bigoplus_{\alpha\in \Irr(\hat \G)}  L_2(\mc B(H_\alpha), h_{L}|_{\mc B(H_\alpha)})
= {\frak N}_{h_L}. 
\]

As in the case of discrete groups, for each $\omega \in L_1(\G)$, we can find a unique $x \in L_\infty(\G)$ such that $\omega = \omega_x$, where we define $\omega_x  (y) = h_L(yx)$ for each $y \in L_\infty(\G)$.  Indeed, for each $\alpha \in \Irr(\hat \G)$ let $\tilde{x}_\alpha \in \mc B(H_\alpha)$ be the unique element such that $\omega|_{\mc B(H_\alpha)} = \Tr_\alpha(\cdot \ \tilde{x}_\alpha).$  Then one readily computes \[\omega = \omega_x \quad \text{where} \quad x = \Big(\frac{\tilde{x}_\alpha Q_\alpha}{\Tr_\alpha(Q_\alpha)} \Big)_{\alpha \in \Irr(\hat \G)}\in L_\infty(\G).\]  Moreover, we have $\|x\|_{L_\infty(\G)} \le \|\omega\|_{L_1(\G)}$.  This yields a linear identification of $L_1(\G)$ with the subspace \[{\frak M}_{h_L} := \Big\{x \in L_\infty(\G): \omega_x \in L_1(\G)\Big\} \subseteq L_\infty(\G),\]  
Using the above identification, we can transfer the convolution product on $L_1(\G)$ to ${\frak M}_{h_L}$.  Indeed, given $x, y\in {\frak M}_{h_L} $, there exists a unique element $z \in {\frak M}_{h_L} $ such that $\omega_z = \omega_x \star \omega_y$.
We write  $z =  x \star  y$ and call $z$ the \textit{convolution product } of $x$ and $y$.
Since $C_c(\G) \subseteq  {\frak M}_{h_L} $, 
we can naturally identify  $C_c(\G)$ with  a subspace ${\mathcal A}(\G) = \{ \omega_x : x \in C_c(\G)\} $ in $L_1(\G)$. 
In fact,   ${\mathcal A}(\G)$ is a norm dense Hopf  $\ast$-subalgebra of $L_1(\G)$ (see \cite {EfRu94}  and \cite {PoWo}).
Therefore, we can conclude that  the  convolution product is closed on $C_c(\G)$, i.e for any $x, y\in C_c(\G)$, we have $z = x \star y \in C_c(\G)$.  The above convolution product for $C_c(\G)$ was also considered by Vergnioux in \cite[Section 4.2]{Ve}.  We note that Vergnioux attributes the above convolution product to Podl\`es and Woronowicz \cite{PoWo}, but the reader should be aware that the convolution product considered here and in \cite{Ve} does not appear explicitly in \cite{PoWo}.  Indeed, \cite[Section 2]{PoWo} considers the \textit{dual} setting and constructs a convolution product on the Hopf $\ast$-algebra $\Pol(\hat \G)$.

\section{$D$-representations and associated C$^\ast$-algebras} \label{section:dreps}

\subsection{Coefficient functions of representations} 
Let $\pi: L_1(\G) \to \mc B(H)$ be a representation and let $(U_\pi, H)$ be the associated unitary representation.
For each $\xi, \eta \in H$, we let $\omega_{\xi, \eta} \in \mc B(H)_*$ denote
 the $\sigma$-weakly continuous linear functional $x \mapsto \langle x\xi | \eta \rangle$ and 
we call 
\[
\varphi^\pi_{\xi,\eta} = ( \iota \otimes \omega_{\xi,\eta})U_\pi =
 ( \iota \otimes \omega_{\xi,\eta}\circ \hat \pi)  \text{\reflectbox{$\W$}}   \in C_b(\G) \subseteq L_\infty(\G) 
\] 
a \textit{coefficient function} of the representation  $\pi$.  Equivalently, $\varphi^\pi_{\xi,\eta}$ is determined by the dual pairing
\[\langle \omega, \varphi^\pi_{\xi,\eta} \rangle = \langle \pi(\omega)\xi|\eta \rangle \qquad (\omega \in L_1(\G)). \]  
One can of course slice representations by more general bounded linear functionals 
 $\omega \in \mc B(H)_*$ and we also call $\varphi^\pi_{\omega} =( \iota \otimes \omega)(U_\pi) \in C_b(\G) \subseteq 
 L_\infty(\G)$ a coefficient function of $\pi$.  

Despite being unbounded in general, the antipode $S$ of $\G$ behaves quite nicely with respect to coefficient functions of representations of $\G$.  Indeed, for any unitary representation $(U_\pi,H)$ of $\G$, one has $U_\pi^* = (S \otimes \iota)U_\pi$, and consequently for any pair of vectors $\xi,\eta \in H$, $\varphi^\pi_{\xi,\eta} \in \text{Dom}(S)$ and $S(\varphi^\pi_{\xi,\eta}) = (\varphi^\pi_{\eta,\xi})^*$.  See for example 
 \cite[Proposition 4.4]{BrDaSa}. 

An important notion in this paper will be that of a positive definite function on $\G$.  

\begin{defn}
An element $y\in C_b(\G)$ is called a (\emph{norm one}) \emph{positive definite function}  if there exist a  representation $(\pi, H)$ and a (unit) vector $\xi\in H$ such that $y = \varphi^\pi_{\xi, \xi}$.  
\end{defn}

\begin{rem} \label{rem:pd}
In \cite{DaSa}, Daws and Salmi considered various quantum group generalizations of the notion of a positive definite function on a group.  The reader should be warned that our notion of positive definiteness differs from the one in \cite{DaSa}.  More precisely, an element $y \in C_b(\G)$ is a positive definite function in our sense if and only if $y^*$ is a \textit{Fourier-Stieltjes transform of a positive measure} in the sense of \cite{DaSa}.
\end{rem}

The following lemma is a straightforward consequence of the definitions.

\begin{lem} \label{lem:products_coefficients}
Let $(\pi,H_\pi)$ and $  (\sigma , H_\sigma)$  be two representations of $\G$.  Then we have 
\[
\varphi^{\pi \otop \sigma}_{\omega \otimes \omega'}= \varphi^{\pi}_{\omega}\varphi^{\sigma}_{\omega'} 
~\mbox{and} ~ \varphi^{\pi \oplus \sigma}_{\omega \oplus \omega'}= \varphi^{\pi}_{\omega} + \varphi^{\sigma}_{\omega'} 
\qquad (\omega \in \mc B(H_\pi)_*, \omega' \in \mc B(H_\sigma)_*).\]
\end{lem}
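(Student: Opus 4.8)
The plan is to prove the two identities by exploiting the explicit formula for coefficient functions in terms of the semi-universal fundamental unitary $\text{\reflectbox{$\W$}}$, together with the defining relations for the tensor product and direct sum unitaries. Recall that for a representation $(\pi,H_\pi)$ and $\omega \in \mc B(H_\pi)_*$ we have $\varphi^\pi_\omega = (\iota \otimes \omega)U_\pi$, and that the tensor product is defined via $U_{\pi \otop \sigma} = U_{\pi,12}U_{\sigma,13} \in M(C_0(\G) \otimes \mc K(H_\pi \otimes H_\sigma))$. The key computational idea is that slicing this product in the last two legs by $\omega \otimes \omega'$ factors through the two unitaries independently because $U_{\pi,12}$ acts only on the first and second legs while $U_{\sigma,13}$ acts only on the first and third.

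For the tensor product identity, I would compute directly:
\[
\varphi^{\pi \otop \sigma}_{\omega \otimes \omega'} = (\iota \otimes \omega \otimes \omega')\big(U_{\pi,12}U_{\sigma,13}\big).
\]
Since $U_{\pi,12}$ is supported on legs $1,2$ and $U_{\sigma,13}$ on legs $1,3$, I would expand this as a product in $C_b(\G) \subseteq L_\infty(\G)$, using that the slice map $(\iota \otimes \omega \otimes \omega')$ applied to a product of operators living on complementary (third) legs multiplies the partial slices. Concretely, writing the result as the product of $(\iota \otimes \omega)U_\pi = \varphi^\pi_\omega$ (slicing the second leg) and $(\iota \otimes \omega')U_\sigma = \varphi^\sigma_{\omega'}$ (slicing the third leg), both landing in the same copy of $L_\infty(\G)$ in the first leg, yields $\varphi^\pi_\omega \varphi^\sigma_{\omega'}$. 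The only care needed is to justify that the $\sigma$-weak slice of $U_{\pi,12}U_{\sigma,13}$ by a product functional $\omega \otimes \omega'$ genuinely splits as the product of the individual slices; this is a standard leg-numbering manipulation, valid because the two unitaries share only the first leg and the functionals $\omega,\omega'$ act on disjoint Hilbert space tensor factors.

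For the direct sum identity, the computation is even more transparent: the unitary $U_{\pi \oplus \sigma}$ decomposes as $U_\pi \oplus U_\sigma$ acting on $C_0(\G) \otimes \mc K(H_\pi \oplus H_\sigma)$ in block-diagonal fashion, and a functional of the form $\omega \oplus \omega' \in \mc B(H_\pi \oplus H_\sigma)_*$ restricts to $\omega$ on the $\mc B(H_\pi)$-block and $\omega'$ on the $\mc B(H_\sigma)$-block while annihilating the off-diagonal corners. Slicing in the second leg then gives
\[
\varphi^{\pi \oplus \sigma}_{\omega \oplus \omega'} = (\iota \otimes (\omega \oplus \omega'))(U_\pi \oplus U_\sigma) = (\iota \otimes \omega)U_\pi + (\iota \otimes \omega')U_\sigma = \varphi^\pi_\omega + \varphi^\sigma_{\omega'},
\]
the additivity coming from the direct-sum structure of the functional. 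I expect the main (minor) obstacle to be purely bookkeeping: correctly tracking which legs each operator and each functional act on in the tensor-product case, and confirming that the slice maps interact multiplicatively across complementary legs; there is no deep difficulty here, and the whole lemma is, as the statement says, a straightforward consequence of the definitions.
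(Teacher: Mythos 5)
Your computation is correct and is exactly the ``straightforward consequence of the definitions'' that the paper asserts without writing out: slicing $U_{\pi,12}U_{\sigma,13}$ by $\iota\otimes\omega\otimes\omega'$ factors into the product of the individual slices (in the order $\varphi^{\pi}_{\omega}\varphi^{\sigma}_{\omega'}$, consistent with the analogous calculation the paper does carry out in the proof of Proposition \ref{prop:cqg_structure}), and the block-diagonal structure of $U_\pi\oplus U_\sigma$ gives the additive formula. Nothing is missing.
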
 


Since $L_\infty(\G)$ is the dual space of $L_1(\G)$, the 
 product on $L_1(\G)$ induces a natural  $L_1(\G)$-bimodule action  on $L_\infty(\G)$ given by 
\[\omega \star x = (\iota \otimes \omega) \Delta(x)  \quad \mbox{and} \quad x \star \omega = (\omega \otimes \iota ) \Delta(x)   \qquad (\omega \in L_1(\G), \ x \in L_\infty(\G)).
\]

\begin{lem} \label{lem:leftright_action}
For any representation $(\pi,  H)$,  $\omega \in L_1^\sharp( \G)$ and  $\ \xi, \eta \in H$,   we have 
\[
\varphi^\pi_{\pi(\omega)\xi, \eta} = \omega \star \varphi^\pi_{\xi, \eta} \quad \text{and} \quad \varphi^\pi_{\xi, \pi(\omega)\eta} =  \varphi^\pi_{\xi, \eta}\star \omega^\sharp.
\]  
\end{lem}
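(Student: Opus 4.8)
The plan is to verify both identities by testing against an arbitrary $\rho \in L_1(\G)$ through the defining dual pairing $\langle \rho, \varphi^\pi_{\xi,\eta} \rangle = \langle \pi(\rho)\xi | \eta \rangle$, and then to transfer everything onto the single coefficient function $\varphi^\pi_{\xi,\eta}$ by exploiting that $\pi$ is an algebra homomorphism together with the duality between the $L_1(\G)$-bimodule action on $L_\infty(\G)$ and the convolution product. The one preliminary fact I would record is this module--convolution duality: for $x \in L_\infty(\G)$ and $\sigma, \rho \in L_1(\G)$,
\[
\langle \rho, \sigma \star x \rangle = \langle \rho \star \sigma, x \rangle \qquad \text{and} \qquad \langle \rho, x \star \sigma \rangle = \langle \sigma \star \rho, x \rangle.
\]
Both follow by unwinding $\sigma \star x = (\iota \otimes \sigma)\Delta(x)$, $x \star \sigma = (\sigma \otimes \iota)\Delta(x)$ and $\rho \star \sigma = (\rho \otimes \sigma)\Delta$: slicing one leg of $\Delta(x)$ and pairing the remaining leg with $\rho$ is the same as pairing $\Delta(x)$ against the appropriate tensor functional on $L_1(\G) \widehat\otimes L_1(\G)$.

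For the first identity, fix $\rho \in L_1(\G)$. Since $\pi$ is a homomorphism, $\pi(\rho)\pi(\omega) = \pi(\rho \star \omega)$, and therefore
\[
\langle \rho, \varphi^\pi_{\pi(\omega)\xi, \eta} \rangle = \langle \pi(\rho)\pi(\omega)\xi | \eta \rangle = \langle \pi(\rho \star \omega)\xi | \eta \rangle = \langle \rho \star \omega, \varphi^\pi_{\xi,\eta} \rangle.
\]
On the other hand, the left module--convolution duality gives $\langle \rho, \omega \star \varphi^\pi_{\xi,\eta} \rangle = \langle \rho \star \omega, \varphi^\pi_{\xi,\eta} \rangle$. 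As $\rho$ is arbitrary and $L_\infty(\G) = L_1(\G)^*$ separates points, the functions $\varphi^\pi_{\pi(\omega)\xi,\eta}$ and $\omega \star \varphi^\pi_{\xi,\eta}$ coincide. I note that this half uses no hypothesis on $\omega$ beyond $\omega \in L_1(\G)$.

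For the second identity the involution enters, and this is where the hypothesis $\omega \in L_1^\sharp(\G)$ is genuinely needed. Since the inner product is conjugate-linear in the second variable, $\langle \pi(\rho)\xi | \pi(\omega)\eta \rangle = \langle \pi(\omega)^* \pi(\rho)\xi | \eta \rangle$. Because $\pi$ restricts to a $\ast$-homomorphism on $L_1^\sharp(\G)$, we have $\pi(\omega)^* = \pi(\omega^\sharp)$, so
\[
\langle \rho, \varphi^\pi_{\xi, \pi(\omega)\eta} \rangle = \langle \pi(\omega^\sharp)\pi(\rho)\xi | \eta \rangle = \langle \pi(\omega^\sharp \star \rho)\xi | \eta \rangle = \langle \omega^\sharp \star \rho, \varphi^\pi_{\xi,\eta} \rangle.
\]
Meanwhile the right module--convolution duality, applied with $\sigma = \omega^\sharp$, yields $\langle \rho, \varphi^\pi_{\xi,\eta} \star \omega^\sharp \rangle = \langle \omega^\sharp \star \rho, \varphi^\pi_{\xi,\eta} \rangle$, and arbitrariness of $\rho$ again finishes the argument.

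I do not expect a serious obstacle here: the content is bookkeeping once the duality identities are in place. The one point demanding care is the direction of those identities---keeping straight which leg of $\Delta(x)$ is sliced by $\sigma$ as opposed to $\rho$, and correspondingly whether $\rho \star \sigma$ or $\sigma \star \rho$ appears---since a slip there would misalign the two sides. The other point worth flagging explicitly is that $\omega \in L_1^\sharp(\G)$ is used only in the second identity, precisely to replace the Hilbert-space adjoint $\pi(\omega)^*$ by $\pi(\omega^\sharp)$.
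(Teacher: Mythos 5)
Your proof is correct and follows essentially the same route as the paper's: test against an arbitrary $\rho \in L_1(\G)$, use that $\pi$ is a homomorphism (and a $\ast$-homomorphism on $L_1^\sharp(\G)$ for the second identity), and transfer via the duality between the bimodule actions and convolution. The extra observations you record --- the explicit duality identities and the fact that $\omega \in L_1^\sharp(\G)$ is only needed for the second equality --- are accurate and consistent with the paper's conventions.
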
  

\begin{proof}Let $\omega' \in L_1(\G)$.  Then 
\begin{align*}\langle \omega', \varphi^\pi_{\pi(\omega)\xi, \eta}\rangle& = \langle \pi(\omega')\pi(\omega)\xi| \eta \rangle= \langle \omega' \star \omega, \varphi^\pi_{\xi, \eta}\rangle = \langle \omega', \omega\star \varphi^\pi_{\xi, \eta} \rangle. 
  \end{align*}
This proves the first equality.  The second equality is proved in the same fashion:
\[\langle \omega', \varphi^\pi_{\xi, \pi(\omega)\eta}\rangle = \langle \pi(\omega)^*\pi(\omega')\xi| \eta \rangle= \langle \omega^\sharp \star \omega', \varphi^\pi_{\xi, \eta}\rangle = \langle \omega',  \varphi^\pi_{\xi, \eta} \star \omega^\sharp \rangle.\]
\end{proof}

\subsection{$D$-Representations}
Let us begin with  the definition of $D$-representations.

\begin{defn}
Let $\G$ be a  locally compact quantum group and let 
$D$ be a (not necessarily closed) linear subspace of $C_b(\G)$.  
A representation $(\pi, H)$ of $\G$  is called a \textit{$D$-representation} if there is a dense subspace $H_0 \subseteq H$ such that for all pairs $\xi, \eta \in H_0$,  the coefficient function $\varphi^\pi_{\xi, \eta}$ belongs to  $D$.  
 \end{defn}

We can easily obtain the following functorial properties of $D$-representations.

 \begin{lem} \label{lemproduct} 
 Let $\G$ be a locally compact quantum group.
 \begin{itemize}
 \item [(1)] The direct sum of a  family  of $D$-representations is a $D$-representation.
 \item [(2)] If $D$ is a subalgebra of $C_b(\G)$, the tensor product of two $D$-representations is a $D$-representation.
 \item[(3)] If $D$ is a two-sided ideal in $C_b(\G)$, the tensor product of a $D$-representation with an arbitrary representation is again a $D$-representation.
 \end{itemize}
\end{lem}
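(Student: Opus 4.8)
The plan is to reduce each of the three assertions to Lemma \ref{lem:products_coefficients}, by choosing the witnessing dense subspace $H_0$ to consist of algebraically simple vectors (finitely supported sums in the direct sum case, finite sums of simple tensors in the tensor product cases) and then invoking exactly the closure property of $D$ that is hypothesized---being a linear subspace in (1), a subalgebra in (2), an ideal in (3). The substantive content is carried entirely by Lemma \ref{lem:products_coefficients}; the rest is bookkeeping.

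For (1), let $\{(\pi_i, H_i)\}_{i \in I}$ be a family of $D$-representations with associated dense subspaces $H_{0,i} \subseteq H_i$, and set $\pi = \bigoplus_i \pi_i$ on $H = \bigoplus_i H_i$. I would take $H_0$ to be the algebraic direct sum of the $H_{0,i}$, i.e.\ the finitely supported vectors $\xi = (\xi_i)$ with $\xi_i \in H_{0,i}$; this is dense in $H$. Since $\pi(\omega) = \bigoplus_i \pi_i(\omega)$, for finitely supported $\xi = (\xi_i), \eta = (\eta_i) \in H_0$ we get $\la \pi(\omega)\xi \mid \eta \ra = \sum_i \la \pi_i(\omega)\xi_i \mid \eta_i \ra$, so (by the finite-direct-sum case of Lemma \ref{lem:products_coefficients}) $\varphi^\pi_{\xi,\eta} = \sum_i \varphi^{\pi_i}_{\xi_i,\eta_i}$ is a \emph{finite} sum of elements of $D$; as $D$ is a linear subspace, it lies in $D$.

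For (2) and (3) the key preliminary observation is that vector functionals factor, $\omega_{\xi_1 \otimes \xi_2,\, \eta_1 \otimes \eta_2} = \omega_{\xi_1,\eta_1} \otimes \omega_{\xi_2,\eta_2}$ as normal functionals on $\mc B(H_\pi \otimes H_\sigma)$, so that Lemma \ref{lem:products_coefficients} gives, for simple tensors, $\varphi^{\pi \otop \sigma}_{\xi_1 \otimes \xi_2,\, \eta_1 \otimes \eta_2} = \varphi^\pi_{\xi_1,\eta_1}\, \varphi^\sigma_{\xi_2,\eta_2}$. In case (2), with $\pi,\sigma$ both $D$-representations carrying dense subspaces $H_{0,\pi}, H_{0,\sigma}$, I would take $H_0 = H_{0,\pi} \odot H_{0,\sigma}$, which is dense in $H_\pi \otimes H_\sigma$. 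Expanding general $\xi,\eta \in H_0$ into finite sums of simple tensors and using linearity of $\xi \mapsto \varphi^{\pi \otop \sigma}_{\xi,\eta}$ and conjugate-linearity of $\eta \mapsto \varphi^{\pi \otop \sigma}_{\xi,\eta}$ reduces matters to the simple-tensor formula, where each product $\varphi^\pi_{\xi_1,\eta_1} \varphi^\sigma_{\xi_2,\eta_2}$ lies in $D$ because $D$ is a subalgebra; the finite sum then stays in $D$ as it is a linear subspace. In case (3), $\sigma$ is arbitrary, so I would take $H_0 = H_{0,\pi} \odot H_\sigma$, still dense in $H_\pi \otimes H_\sigma$; now $\varphi^\pi_{\xi_1,\eta_1} \in D$ while $\varphi^\sigma_{\xi_2,\eta_2} \in C_b(\G)$, so each product lies in $D$ by the ideal property $D \cdot C_b(\G) \subseteq D$, and finite sums again remain in $D$.

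There is no genuine obstacle here: the only points demanding mild care are the factorization of vector functionals and the density of the algebraic direct sum and algebraic tensor product. The one place to stay attentive is the infinite direct sum in (1)---restricting $H_0$ to finitely supported vectors is precisely what keeps $\varphi^\pi_{\xi,\eta}$ a finite sum and hence inside the merely linear subspace $D$, since $D$ is not assumed closed.
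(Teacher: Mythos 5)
Your proof is correct and follows essentially the same route as the paper's: choose the algebraic direct sum (resp.\ algebraic tensor product) of the given dense subspaces as the witnessing dense subspace, and reduce to Lemma \ref{lem:products_coefficients} together with the hypothesized closure property of $D$ (linear subspace, subalgebra, or two-sided ideal). The only cosmetic difference is that the paper also records the opposite-order tensor product $\sigma\otop\pi$ in part (3), which your argument covers anyway since $D$ is a two-sided ideal.
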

\begin{proof} We provide a proof for (1) and (3).
The argument for (2) is similar.
Let  $\{(\pi_\alpha, H_\alpha)\}$  be a family of $D$-representations with norm dense subspaces  $H_{\alpha, 0}\subseteq H_\alpha$.
It is known from  Lemma \ref {lem:products_coefficients} that  $(\oplus_\alpha \pi_\alpha, \oplus _\alpha H_\alpha)$ is again a representation
with the associated unitary operator given by  $\oplus_\alpha U_\alpha$.
In this case, $H_0 = \{\xi = (\xi_\alpha):  \xi_\alpha \in H_{\alpha, 0}  \mbox{ and  finitely many } 
~ \xi_\alpha \neq 0\}$ is a norm dense subspace of 
$ \oplus _\alpha H_\alpha$ such that 
\[
\varphi ^{\oplus_\alpha \pi_\alpha}_{\xi, \eta} = \sum_\alpha \varphi^{\pi_\alpha}_{\xi_\alpha, \eta_\alpha} \in D
\]
for all $\xi = (\xi_\alpha), \eta = (\eta_\alpha)  \in H_0$.
This  shows that $\oplus_\alpha \pi_\alpha$ is  a $D$-representation.  

Let $D$ be a two-sided ideal, $(\pi,H)$ a $D$-representation with norm dense subspace $H_0$, and $(\sigma, K)$ another representation.  Then Lemma \ref{lem:products_coefficients} implies that $(\pi \otop \sigma, H \otimes K)$ and $(\sigma \otop \pi, K \otimes H)$ are $D$-representations with dense subspaces $H_0 \odot K$ and $K \odot H_0$, respectively.
\end{proof}


Clearly every representation of $\G$ is an $C_b(\G)$-representation.  Using the language of \cite{DaFiSkWh}, the $C_0(\G)$-representations are precisely the  \textit{mixing} representations of $\G$ and arise in the study of the Haagerup property.  We will discuss this further in Section \ref{section:HAP}.
When $\G$ is a discrete quantum group, we get $L_\infty(\G) = C_b(\G)$.  In this setting another important space to consider is the two-sided ideal $D=C_c(\G)$ in
$L_\infty(\G)$ and the corresponding class of $C_c(\G)$-representations.  The following proposition shows that for discrete quantum groups, the left regular representation $(\lambda, L_2(\G))$ is the prototypical example of a $C_c(\G)$-representation.  

\begin{prop} \label{prop:regular_is_C00}  If  $\G$ is a discrete quantum group, 
the left regular representation $(\lambda ,L_2(\G))$ is a $C_c(\G)$-representation.
\end{prop}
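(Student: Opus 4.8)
The plan is to exhibit one dense subspace $H_0 \subseteq L_2(\G)$ for which every coefficient function $\varphi^\lambda_{\xi,\eta}$ with $\xi,\eta \in H_0$ has finite central support, i.e.\ lies in $C_c(\G) = \bigoplus^{\mathrm{alg}}_\alpha \mc B(H_\alpha)$. The natural candidate is $H_0 = \Lambda(C_c(\G))$. Since each block $\mc B(H_\alpha)$ is finite-dimensional, every element of $\mc B(H_\alpha)$ lies in $\mf N_{h_L}$ and $\Lambda(\mc B(H_\alpha)) = p_\alpha L_2(\G)$; hence $H_0 = \bigoplus_{\alpha}^{\mathrm{alg}} p_\alpha L_2(\G)$ is exactly the algebraic direct sum of the blocks, which is dense in $L_2(\G) = \ell_2\text{-}\bigoplus_\alpha p_\alpha L_2(\G)$. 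Because $(\xi,\eta) \mapsto \varphi^\lambda_{\xi,\eta}$ is linear in $\xi$ and conjugate-linear in $\eta$, it suffices to treat vectors $\xi \in p_\alpha L_2(\G)$ and $\eta \in p_\beta L_2(\G)$ supported in single blocks and then show $\varphi^\lambda_{\xi,\eta} \in C_c(\G)$.

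For such vectors I would compute the coefficient function blockwise using the explicit form of the fundamental unitary. Since $U_\lambda = W = (\iota \otimes \hat\pi_\lambda)\text{\reflectbox{$\W$}}$ and $\text{\reflectbox{$\W$}}$ is given by \eqref{bichar_compact}, writing $v_{ji}^\gamma = \hat\pi_\lambda(\hat u_{ji}^\gamma) \in C_0(\hat\G) \subseteq \mc B(L_2(\G))$ for the reduced matrix coefficients yields
\[
\varphi^\lambda_{\xi,\eta} = (\iota \otimes \omega_{\xi,\eta})(W) = \bigoplus_{\gamma \in \Irr(\hat\G)} \sum_{i,j=1}^{d_\gamma} \langle \xi | v_{ji}^\gamma \eta \rangle\, e_{ij}^\gamma,
\]
so that $p_\gamma \varphi^\lambda_{\xi,\eta}$ is nonzero only when $\langle \xi | v_{ji}^\gamma \eta\rangle \ne 0$ for some $i,j$. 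The remaining task is therefore to verify that this pairing vanishes for all but finitely many $\gamma$.

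The key input is the Peter--Weyl structure of $L_2(\G)$ together with the fusion rules of $\hat\G$. Under the identification of $p_\beta L_2(\G)$ with the $\beta$-isotypic component of $L_2(\hat\G)$, left multiplication by the $\gamma$-matrix coefficients $v_{ji}^\gamma$ sends the block $p_\beta L_2(\G)$ into the sum of the blocks $p_\delta L_2(\G)$ with $\delta$ contained in $\gamma \otimes \beta$. Thus $v_{ji}^\gamma \eta \in \bigoplus_{\delta \subseteq \gamma \otimes \beta} p_\delta L_2(\G)$, and the pairing with $\xi \in p_\alpha L_2(\G)$ can be nonzero only if $\alpha \subseteq \gamma \otimes \beta$, equivalently (by Frobenius reciprocity) only if $\gamma \subseteq \alpha \otimes \overline{\beta}$. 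As $\alpha \otimes \overline{\beta}$ decomposes into finitely many irreducibles, only finitely many $\gamma$ survive, whence $\varphi^\lambda_{\xi,\eta} \in C_c(\G)$. The main obstacle is precisely this last step: one must set up the correspondence between the block decomposition $L_2(\G) = \ell_2\text{-}\bigoplus_\alpha \mc B(H_\alpha)$ and the Peter--Weyl decomposition of the compact dual $\hat\G$ carefully enough to invoke the fusion rules, and check that left multiplication by $\Pol(\hat\G)$ respects this grading as claimed. (Alternatively, one can avoid corepresentation theory by expressing $\varphi^\lambda_{\Lambda(a),\Lambda(b)}$ through the convolution product on $\mf M_{h_L}$ and invoking the fact, recorded earlier, that convolution is closed on $C_c(\G)$.)
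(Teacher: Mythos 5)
Your argument is correct, but it takes a genuinely different route from the paper's. The paper also works with the dense subspace $\Lambda(C_c(\G))$, but then finishes in two lines: it quotes the Kustermans--Vaes slice formula $\varphi^\lambda_{\Lambda(x),\Lambda(y)} = (\iota\otimes h_L)(\Delta(y^*)(1\otimes x))$ and the Effros--Ruan fact that $\Delta(C_c(\G))(1\otimes C_c(\G))\subseteq C_c(\G)\odot C_c(\G)$, which immediately places the coefficient in $C_c(\G)$. You instead pass to the dual compact quantum group: you expand $W$ blockwise via the semi-universal unitary, identify the blocks $p_\beta L_2(\G)$ with the Peter--Weyl isotypic components of $L_2(\hat\G)$, and use that left multiplication by $\gamma$-matrix coefficients maps the $\beta$-component into the $\delta$-components with $\delta\subseteq\gamma\otimes\beta$, so that orthogonality plus Frobenius reciprocity leaves only the finitely many $\gamma\subseteq\alpha\otimes\overline{\beta}$. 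Both arguments are sound and rest on the same underlying combinatorics --- the cited Effros--Ruan corollary is itself a repackaging of the fusion rules --- so what the paper's route buys is brevity and freedom from any explicit corepresentation theory, while yours buys transparency: it makes visible exactly \emph{which} blocks of $\varphi^\lambda_{\xi,\eta}$ can be nonzero. The one step you flag as the ``main obstacle'' (setting up the unitary identification $L_2(\G)\cong L_2(\hat\G)$ compatibly with the gradings) is indeed where the work lives in your version; it is standard but should be written out, since for non-unimodular $\G$ the identification of $p_\delta L_2(\G)$ with the $\delta$-isotypic component involves the $Q$-matrices, even though the subspaces themselves match. Your closing parenthetical alternative is essentially the paper's proof.
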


\begin{proof}
Since  $C_c(\G) $ is a dense subspace of $  L_2(\G)$, 
it suffices to show that \[\varphi^\lambda_{\Lambda(x), \Lambda(y)} \in C_c(\G) \qquad (x,y \in C_c(\G)).\] 
Let us recall  from  equation (8.1) in \cite{KuVa1} that   
\begin{align} \label{eq:KuVa}\varphi^\lambda_{\Lambda(x), \Lambda(y)} = (\iota \otimes \omega_{\Lambda(x), \Lambda(y)}) W = (\iota \otimes h_L)(\Delta(y^*)(1 \otimes x)) \qquad (x,y \in \mathfrak N_{h_L}).
\end{align}  
Since   $\Delta ( C_c(\G))(1 \otimes C_c(\G)) \subseteq C_c(\G)\odot C_c(\G)$ 
(see \cite[Corollary 6.5]{EfRu94}), we have $\varphi^\lambda_{\Lambda(x), \Lambda(y)} \in C_c(\G) $  for all 
$x, y\in C_c(\G) \subseteq  L_2(\G)$
\end{proof}

\begin{rem} \label{rem:Nh*}
For general (non-discrete/non-compact) locally compact quantum groups, there is no good notion for the space $C_c(\G)$.
 In this case, we can replace $C_c(\G)$ in Proposition \ref {prop:regular_is_C00}  with the space $\mathfrak N^*_{h_L} \cap C_b(\G)$ and show that the left regular representation $(\lambda, L_2(\G))$ is an $\mathfrak N^*_{h_L}  \cap C_b(\G)$-representation.
 To see this, let  us  recall from  \cite[Section 3]{BuMe09} that a  representation $\pi: L_1(\G) \to \mc B(H)$ is \emph{square integrable} if
for each $\alpha$ in a dense subspace of $H$ and every $\beta \in H$, 
the bounded linear functional $C^\pi_{\alpha, \beta} = \omega_{\alpha, \beta} \circ \hat \pi \in C_u(\hat \G)^*$ is square integrable.  I.e., the map
\[
 \hat x \in {\rm Dom}(\hat \Lambda_u) \mapsto C^\pi_{\alpha, \beta} (\hat x)  \in \mathbb C 
  \]
 extends  to a bounded linear functional on $L_2(\hat \G)$. 
 Let us recall that there is a dense subspace 
 $ {\mathcal I} = \{\omega \in L_1(\G): ~\mbox {there exists } ~M\ge 0 ~\mbox{such that}~ |\omega(x^*)| \le M \|\Lambda(x)\|: x\in {\frak N}_{h_L}\}$ in $L_1(\G)$, and  each $\omega\in {\mathcal I}$ determines a unique
 element $\xi(\omega)\in L_2(\G)$ such that   $\langle \xi(\omega) |  \Lambda(x)\rangle = \omega(x^*)$ for all $x \in {\frak N}_{h_L}$ (see  \cite[Section 8]{KuVa1}).
Then with respect to the dual Haar weight $\hat h_{L}$, we have the isometric identification
\[
\xi(\omega) \in L_2(\G)  \to  \hat \Lambda (\lambda(\omega)) = \hat \Lambda_u(\pi_u(\omega)) \in L_2(\hat \G)   \qquad (\omega \in {\mathcal I}).
\]
 Since
 \[
C^\pi_{\alpha, \beta} (\pi_u(\omega)) = (\omega \otimes \omega_{\alpha, \beta}\circ \hat \pi)\text{\reflectbox{$\W$}}
= \langle \omega, \varphi^\pi_{\alpha, \beta}\rangle = \langle \xi(\omega) |  \Lambda((\varphi^{\pi }_{\alpha, \beta})^*)\rangle
 \qquad (\omega \in {\mathcal I}), 
\]
we conclude from  \cite[Proposition 1.11.26]{Vaes} that  $C^\pi_{\alpha, \beta}$ is square integrable if and only if  the coefficient function  $\varphi^\pi_{\alpha, \beta} $ is contained in $ \mathfrak N^*_{h_L}  \cap C_b(\G)$.  Thus, whenever $\pi$ is a square integrable representation, it is also a $ \mathfrak N^*_{h_L}  \cap C_b(\G)$-representation. (Note that the converse is not being claimed!)  Since the left regular representation $(\lambda, L_2(\G))$ is square integrable \cite [Lemma 3.7]{BuMe09}, it is an $\mathfrak N^*_{h_L}  \cap C_b(\G)$-representation
 \end{rem} 
  
The following result is the discrete quantum group analogue of the fact that any square summable function on a discrete group can be realized as a coefficient function of the left regular representation.

\begin{prop} \label{prop:C_00inA(G)}  If  $\G$ is a discrete quantum group, 
 every element $y \in \mathfrak N_{h_L}^*$ is a
coefficient function of the left regular representation $(\lambda, L_2(\G))$.
If, in addition,  $y $ is a norm one positive definite function, then there is a unit vector $\xi\in L_2(\G)$ such that 
$y= \varphi^\lambda_{\xi, \xi}$.
\end{prop}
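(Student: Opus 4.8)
The plan is to treat the two assertions separately, using the explicit ``vacuum'' vector $\Lambda(p_\epsilon)$ coming from the trivial representation $\epsilon$ of $\hat\G$ for the first, and a standard-form argument for the second. Throughout I use the identification $L_2(\G)=\mathfrak N_{h_L}$ and the fact that $y\in\mathfrak N_{h_L}^*$ means $y^*\in\mathfrak N_{h_L}$.

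For the first assertion I would exploit formula \eqref{eq:KuVa}, which gives $\varphi^\lambda_{\Lambda(a),\Lambda(b)}=(\iota\otimes h_L)(\Delta(b^*)(1\otimes a))$ for $a,b\in\mathfrak N_{h_L}$. Writing $p_\epsilon\in C_c(\G)\subseteq\mathfrak N_{h_L}$ for the minimal central projection onto the one-dimensional trivial block (so $\Lambda(p_\epsilon)$ is a unit vector, as $h_L(p_\epsilon)=1$), I would study the single-vector map $T\colon\Lambda(a)\mapsto\varphi^\lambda_{\Lambda(a),\Lambda(p_\epsilon)}=(\iota\otimes h_L)(\Delta(p_\epsilon)(1\otimes a))$. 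This is the quantum analogue of the classical identity $\langle\lambda(g)\xi\,|\,\delta_e\rangle=\xi(g^{-1})$, in which $\delta_e$ ``reads off'' a reflection of an arbitrary $\ell_2$-coefficient. The key computation is that $\Delta(p_\epsilon)$ is supported on the blocks $\mc B(H_\alpha)\otimes\mc B(H_{\bar\alpha})$ and restricts there to the rank-one projection onto the canonical invariant vector $t_\alpha\in H_\alpha\otimes H_{\bar\alpha}$ implementing the duality $\epsilon\subseteq\alpha\otimes\bar\alpha$. Slicing the second leg by $h_L$ then shows that, for $a\in\mc B(H_{\bar\alpha})$, the element $T(\Lambda(a))$ ranges over all of $\mc B(H_\alpha)=p_\alpha L_\infty(\G)$, and that the nondegeneracy of $t_\alpha$ makes $a\mapsto T(\Lambda(a))$ a linear bijection of $\mc B(H_{\bar\alpha})$ onto $\mc B(H_\alpha)$.

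Given a general $y\in\mathfrak N_{h_L}^*$, I would decompose $y=\sum_\alpha p_\alpha y$, solve $p_\alpha y=T(\Lambda(a_\alpha))$ on each block, and assemble $\xi:=\sum_\alpha\Lambda(a_\alpha)$. The decisive point, and the step I expect to be the main obstacle, is to verify that the blockwise correspondence $p_\alpha y\leftrightarrow\Lambda(a_\alpha)$ is, up to uniformly two-sided bounded constants, norm-comparable between the Hilbert-space norm $\|\cdot\|_{2,*}$ on $\mathfrak N_{h_L}^*$ and the $L_2(\G)$-norm, so that $\sum_\alpha\|p_\alpha y\|_{2,*}^2<\infty$ forces $\sum_\alpha\|\Lambda(a_\alpha)\|^2<\infty$ and hence convergence of $\xi$ in $L_2(\G)$; this is essentially a Plancherel/orthogonality computation involving the matrices $Q_\alpha$ and the vectors $t_\alpha$. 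Since $\Lambda(a)\mapsto\varphi^\lambda_{\Lambda(a),\Lambda(p_\epsilon)}$ is bounded from $L_2(\G)$ into $L_\infty(\G)$, passing to the limit then yields $\varphi^\lambda_{\xi,\Lambda(p_\epsilon)}=\sum_\alpha p_\alpha y=y$, realizing $y$ as a coefficient function of $\lambda$.

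For the positive-definite refinement, suppose $y=\varphi^\pi_{\zeta,\zeta}$ with $\|\zeta\|=1$. By the first part, $y=(\iota\otimes\Omega)W$ for the normal functional $\Omega:=\omega_{\xi_1,\eta_1}\in\mc B(L_2(\G))_*$, so that $\langle\omega,y\rangle=\Omega(\lambda(\omega))$ for all $\omega\in L_1(\G)$. Restricting $\Omega$ to $L_\infty(\hat\G)=\lambda(L_1(\G))''\subseteq\mc B(L_2(\G))$ gives a normal functional $\tilde\rho$. On the dense $\ast$-subalgebra $\lambda(L_1^\sharp(\G))$ one has $\tilde\rho(\lambda(\omega)^*\lambda(\omega))=\langle\omega^\sharp\star\omega,y\rangle=\langle\pi(\omega)\zeta\,|\,\pi(\omega)\zeta\rangle\ge0$, so by Kaplansky density together with normality $\tilde\rho$ is positive; moreover $\tilde\rho(1)=\Omega(\lambda(e))=\langle e,y\rangle=\langle\zeta\,|\,\zeta\rangle=1$, where $e$ is the unit of $L_1(\G)$ and $\lambda(e)=1$ since $\hat\G$ is compact (so $C_0(\hat\G)=C(\hat\G)$ is unital). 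Hence $\tilde\rho$ is a normal state on $L_\infty(\hat\G)$. Since $L_\infty(\hat\G)$ acts in standard form on $L_2(\hat\G)=L_2(\G)$ (the GNS space of the dual Haar state $\hat h$), every normal state is a vector state, so there is a unit vector $\xi\in L_2(\G)$ with $\tilde\rho(\hat x)=\langle\hat x\xi\,|\,\xi\rangle$. Evaluating on $\lambda(\omega)$ gives $\langle\omega,y\rangle=\langle\lambda(\omega)\xi\,|\,\xi\rangle$, that is $y=\varphi^\lambda_{\xi,\xi}$ with $\|\xi\|=1$, as desired. I would emphasize that this standard-form argument needs no traciality and therefore applies to arbitrary (not necessarily unimodular) discrete $\G$.
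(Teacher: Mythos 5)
Your argument for the second assertion --- realize $y$ as a normal functional on $L_\infty(\hat\G)=\lambda(L_1(\G))''$, check positivity on the dense $\ast$-subalgebra $\lambda(L_1^\sharp(\G))$, and invoke standard position of $L_\infty(\hat\G)$ on $L_2(\G)$ --- is exactly the paper's argument and is fine (modulo the first part). The problem is the first assertion. You put the distinguished vector $\Lambda(p_0)$ in the \emph{second} slot and let the first slot vary, and the step you yourself flag as ``the main obstacle'' --- that the blockwise correspondence $p_\alpha y\leftrightarrow\Lambda(a_\alpha)$ is norm-comparable with uniformly bounded constants --- is not only left unproven but is false for non-unimodular $\G$. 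Indeed, by the antipode identity $S(\varphi^\lambda_{\xi,\eta})=(\varphi^\lambda_{\eta,\xi})^*$ together with the identity $\varphi^\lambda_{\Lambda(p_0),\Lambda(a)}=a^*$ for $a\in\mathfrak N_{h_L}$, your map satisfies $T(\Lambda(a))^*=S(a^*)$; so the correspondence you need to control is, up to adjoints, the antipode, and the relevant $L_2$-comparison is $h_L(a^*a)$ versus $h_L\big(S(a^*)^*S(a^*)\big)$. On the block $\mc B(H_\alpha)$ these two quadratic forms differ by conjugation with powers of $Q_\alpha$, and the comparison constants grow like $\|Q_\alpha\|^2\|Q_\alpha^{-1}\|^2$, which is unbounded over $\Irr(\hat\G)$ in general (already for the discrete dual of $SU_q(2)$ a direct computation on the spin-$n$ block gives ratios ranging over $q^{-2n},\dots,q^{2n}$). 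Hence there exist $y\in\mathfrak N_{h_L}^*$ for which your $\xi=\sum_\alpha\Lambda(a_\alpha)$ diverges in $L_2(\G)$, and the first half of your proof collapses outside the unimodular case.

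The fix is simply to exchange the roles of the two slots, which is what the paper does: since $\omega_{p_0}$ is the unit of $L_1(\G)$, for every $\omega\in L_1(\G)$ one has $\langle\omega,y\rangle=\langle\omega\star\omega_{p_0},y\rangle=\langle\omega,(\iota\otimes h_L)(\Delta(y)(1\otimes p_0))\rangle=\langle\omega,\varphi^\lambda_{\Lambda(p_0),\Lambda(y^*)}\rangle$ by \eqref{eq:KuVa}, so $y=\varphi^\lambda_{\Lambda(p_0),\Lambda(y^*)}$ outright. This uses exactly the hypothesis $y^*\in\mathfrak N_{h_L}$, requires no blockwise inversion, no analysis of $\Delta(p_0)$, and no convergence argument, and it hands the second part precisely the normal functional $\lambda(\omega)\mapsto\langle\lambda(\omega)\Lambda(p_0)\,|\,\Lambda(y^*)\rangle$ that your standard-form argument then needs.
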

\begin{proof}

Let $p_{0} \in L_\infty(\G)$ be the minimal central projection associated with the  equivalence class of the trivial representation of $\hat \G$.  Since 
$\omega_{p_{0}}$ is the unit of $L_1(\G)$, equation \eqref{eq:KuVa} gives  
\begin{align*}   \langle \omega, y  \rangle &=\langle \omega\star \omega_{p_{0}}, y  \rangle
= \langle \omega, (\iota \otimes h_L)(\Delta(y)(1 \otimes p_{0}))  \rangle 
= \langle \omega, \varphi^\lambda_{\Lambda(p_{0}), \Lambda(y^*)} \rangle   \qquad (\omega \in L_1(\G)).
\end{align*}
Therefore $y = \varphi^\lambda_{\Lambda(p_{0}), \Lambda(y^*)}$ is a coefficient function of the left regular representation.  In particular, $y$ determines a normal linear functional on $L_\infty(\hat \G) = \lambda(L_1^\sharp(\G))''$ such that
\[
\lambda(\omega) \mapsto \langle \omega, y \rangle = \langle  \lambda(\omega) \Lambda(p_{0}) |   \Lambda(y^*) \rangle
\qquad   (\omega\in L_1^\sharp(\G)).
\]  If $y = \varphi^\pi_{\eta, \eta}$ is also norm one positive  definite function, then the above linear functional is a normal state since  
\[
\langle \omega^\sharp \star \omega, y \rangle =  \langle \pi(\omega^\sharp \star \omega) \eta | \eta\rangle \ge 0 \qquad  (\omega\in L_1^\sharp(\G)).
\]
Since $L_\infty(\hat \G)$ is in standard position on $L_2(\G)$, it follows that  $y = \varphi^\lambda_{\xi, \xi}$ for some unit vector $\xi\in L_2(\G)$.   
\end{proof}


\subsection{The $D$-C$^*$-algebra of $\G$}
Let $\G$ be a locally compact quantum group and let $D$ be a linear subspace  of $C_b(\G)$.
We can define a C$^\ast$-semi-norm 
\[\|\omega\|_{D}  = \sup\{\|\pi(\omega)\|: \pi \text{ is a $D$-representation of $\G$} \} \] 
 on $L_1 (\G)$.
 The (non-)degeneracy of the semi-norm $\|\cdot\|_{D}$ depends on the particular structure of the subspace $D$.  For general locally compact quantum groups $\G$, the inclusion $\mathfrak N_{h_L} ^*\cap C_b(\G) \subseteq D$ suffices to ensure that $\|\cdot\|_{D}$ is non-degenerate because the left regular representation $(\lambda, L_2(\G))$ is always a faithful $D$-representation (see Remark \ref {rem:Nh*}).  
When $\G$ is a discrete quantum group, a weaker sufficient condition for the non-degeneracy of $\|\cdot\|_D$ is that $C_c(\G) \subseteq D$.  This follows because the left regular representation is a faithful $C_c(\G)$-representation by Proposition \ref{prop:regular_is_C00}.  Since we are generally interested in linear subspaces $D \subseteq C_b(\G)$ for which $\|\cdot\|_D$ is a C$^\ast$-norm on $L_1(\G)$ that dominates the $C(\hat \G)$-norm, this leads us to make the following assumption throughout the rest of the paper.

\begin{assumption} \label{assumption}  In this paper, we only consider linear subspaces $D \subseteq C_b(\G)$ for which $(\lambda, L_2(\G))$ is a $D$-representation.  In particular, we always assume $\|\cdot\|_D$ is a C$^\ast$-norm on $L_1(\G)$.
\end{assumption}  

Under the above assumption, we define the \textit{$D$-C$^\ast$-algebra} of $\G$,
 \[
 C^*_{D}(\G) = \overline{L_1 (\G)}^{\|\cdot\|_D} =  \overline{L_1^\sharp (\G)}^{\|\cdot\|_D} 
 \]
to be the norm closure of $L_1(\G)$ (respectively, the closure of $L_1^\sharp(\G)$)
with respect to the norm $\|\cdot\|_D$.  When $D= C_b(\G)$, we will simply write $C^*_\infty(\G)$ instead of $C^*_{C_b(\G)}(\G)$.

\begin{rem} \label{rem:discrete/compact}For a discrete quantum group $\G$, we can equivalently consider $C^*_{D}(\G)$ as a C$^\ast$-completion of the algebra $\Pol(\hat \G)$ of polynomial functions on $\hat \G$.  Indeed, since there is a natural $\ast$-isomorphic identification between   
${\mathcal A}(\G) \subset L_1^\sharp(\G)$ and $\Pol(\hat \G)$, we see that each representation $(\pi,H)$ of $\G$ uniquely corresponds to a $\ast$-representation $(\hat \pi,H)$ of $\Pol(\hat \G)$.
We can then equivalently define a C$^\ast$-norm $\|\cdot\|_D$ on $\Pol(\hat \G)$ by setting
 \[\|\hat a\|_D = \sup\{\|\hat \pi(\hat a)\|: (\pi,H_\pi) \text{ is a $D$-representation of $\G$}\} \qquad (\hat a \in \Pol(\hat \G)).\] 
If we let $C_D(\hat \G) = \overline{\Pol(\hat \G)}^{\|\cdot\|_{D}},$ then we obtain 
a canonical $\ast$-isomorphism $C_D(\hat \G) \cong C^*_D(\G)$ extending the isomorphism $\Pol(\hat \G) \cong \mc A(\G)$.  In the remainder of the paper, we will regard $C^*_D(\G)$ and $C_D(\hat \G)$ as the same object and simply refer to it as \textit{the} $D$-C$^\ast$-algebra of the discrete quantum group $\G$. 
\end{rem}

The following are  some expected properties of $D$-C$^\ast$-algebras.

\begin{prop}\label{equiv2} Let $\G$ be a locally compact quantum group.
\begin{enumerate}
\item \label{ue} Every $D$-representation $(\pi,H)$ of $\G$ extends uniquely to a $\ast$-homomorphism $\pi:C^*_D(\G) \to \mc B(H)$.
\item \label{faithfulDrep}    Every $D$-C$^\ast$-algebra $C_D^*(\G)$ admits a \textit{faithful} $\ast$-representation $\pi_D:C_D^*(\G) \to \mc B(H_D)$ whose restriction to $L_1(\G)$ is a $D$-representation.
\item \label{ideals_inclusion} If $D_1 \subseteq D_2$ are subspaces in $C_b(\G)$, then there exists a unique surjective $\ast$-homomorphism $\sigma:C_{D_2}^*(\G) \to C_{D_1}^*(\G)$ extending the identity map on $L_1 (\G)$. 
\item \label{full} The identity map on $L_1(\G)$ extends to a $\ast$-isomorphism $C_{\infty}^*(\G) \cong C_u(\hat \G)$.
\item \label{unimodular} If $\G$ is discrete and $C_c(\G) \subseteq D \subseteq {\frak N}_{h_L} ^*$,  then the identity map on $L_1 (\G)$ extends to a $\ast$-isomorphism  $C_D^*(\G) \cong C(\hat \G)$.
\end{enumerate}
\end{prop}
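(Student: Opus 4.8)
The plan is to dispatch parts (1)--(4) by directly unwinding the definition of the C$^\ast$-semi-norm $\|\cdot\|_D$, and to concentrate the real work on part (5), which identifies $C^*_D(\G)$ with the reduced C$^\ast$-algebra.

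\textbf{Parts (1)--(4).} For (\ref{ue}), since $\pi$ is itself a $D$-representation it occurs in the supremum defining $\|\cdot\|_D$, whence $\|\pi(\omega)\| \le \|\omega\|_D$ for every $\omega \in L_1(\G)$; thus $\pi$ is $\|\cdot\|_D$-contractive and extends by continuity to $C^*_D(\G)$, the extension being a $\ast$-homomorphism because $\pi$ is one on the dense $\ast$-subalgebra $L_1^\sharp(\G)$, and unique by density. For (\ref{faithfulDrep}), fix a dense subset $S \subseteq L_1(\G)$; for each $s \in S$ and $n \in \N$ choose a $D$-representation $\pi_{s,n}$ with $\|\pi_{s,n}(s)\| \ge \|s\|_D - 1/n$. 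The family $\{\pi_{s,n}\}$ is a set, so by Lemma \ref{lemproduct}(1) its direct sum $\pi_D := \bigoplus_{s,n}\pi_{s,n}$ is again a $D$-representation, and by construction $\|\pi_D(s)\| = \|s\|_D$ for $s \in S$; by continuity and density $\pi_D$ is isometric on all of $L_1(\G)$, hence extends to a faithful $\ast$-representation of $C^*_D(\G)$. For (\ref{ideals_inclusion}), the inclusion $D_1 \subseteq D_2$ makes every $D_1$-representation a $D_2$-representation, so $\|\omega\|_{D_1} \le \|\omega\|_{D_2}$; the identity on $L_1(\G)$ is therefore $\|\cdot\|_{D_2}$-$\|\cdot\|_{D_1}$ contractive and extends to a $\ast$-homomorphism $\sigma$, which is surjective since its image is a C$^\ast$-subalgebra containing the dense set $L_1(\G)$, and unique by density. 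For (\ref{full}), every representation of $\G$ is a $C_b(\G)$-representation, so $\|\cdot\|_\infty$ is the supremum over all representations; using the universal factorization $\pi = \hat\pi \circ \pi_u$ gives $\|\pi(\omega)\| \le \|\pi_u(\omega)\|$, while $\pi_u$ is itself a representation, so $\|\pi_u(\omega)\| \le \|\omega\|_\infty$. Hence $\|\omega\|_\infty = \|\pi_u(\omega)\|$ and $\omega \mapsto \pi_u(\omega)$ extends to the desired $\ast$-isomorphism $C^*_\infty(\G) \cong C_u(\hat\G)$.

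\textbf{Part (5).} Since $C_c(\G) \subseteq D$, the left regular representation $\lambda$ is a $D$-representation by Proposition \ref{prop:regular_is_C00}, so $\|\lambda(\omega)\| \le \|\omega\|_D$; as $C(\hat\G) = \overline{\lambda(L_1(\G))}^{\|\cdot\|}$, it suffices to establish the reverse estimate $\|\pi(\omega)\| \le \|\lambda(\omega)\|$ for every $\mathfrak N_{h_L}^*$-representation $\pi$, since the hypothesis $D \subseteq \mathfrak N_{h_L}^*$ renders every $D$-representation an $\mathfrak N_{h_L}^*$-representation. This amounts to the weak containment $\pi \preceq \lambda$. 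Let $H_0$ be the dense subspace witnessing that $\pi$ is an $\mathfrak N_{h_L}^*$-representation. For a unit vector $\xi \in H_0$, the coefficient $y := \varphi^\pi_{\xi,\xi}$ is a norm-one positive definite function lying in $\mathfrak N_{h_L}^*$, so Proposition \ref{prop:C_00inA(G)} produces a unit vector $\zeta \in L_2(\G)$ with $y = \varphi^\lambda_{\zeta,\zeta}$. For $\omega \in L_1^\sharp(\G)$ we then compute, using that $\pi$ and $\lambda$ restrict to $\ast$-homomorphisms on $L_1^\sharp(\G)$,
\[
\|\pi(\omega)\xi\|^2 = \langle \omega^\sharp \star \omega, y \rangle = \langle \lambda(\omega^\sharp \star \omega)\zeta \,|\, \zeta\rangle = \|\lambda(\omega)\zeta\|^2 \le \|\lambda(\omega)\|^2.
\]
Taking the supremum over unit vectors $\xi \in H_0$ (dense in the unit sphere of $H$) yields $\|\pi(\omega)\| \le \|\lambda(\omega)\|$ for $\omega \in L_1^\sharp(\G)$, and this passes to all of $L_1(\G)$ by density of $L_1^\sharp(\G)$. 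Combining the resulting bound with the monotonicity from (\ref{ideals_inclusion}) gives the chain
\[
\|\lambda(\omega)\| \le \|\omega\|_{C_c(\G)} \le \|\omega\|_{D} \le \|\omega\|_{\mathfrak N_{h_L}^*} \le \|\lambda(\omega)\|,
\]
so all norms equal the reduced norm and $C^*_D(\G) \cong \overline{\lambda(L_1(\G))}^{\|\cdot\|} = C(\hat\G)$ via the identity on $L_1(\G)$.

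\textbf{Main obstacle.} I expect (1)--(4) to be routine bookkeeping; the one genuinely substantial point is the weak-containment estimate in (5). Its crux is the passage, via Proposition \ref{prop:C_00inA(G)}, from a positive definite coefficient function of $\pi$ that happens to be square-integrable to an honest vector state $\omega_{\zeta,\zeta}$ of the regular representation, after which the C$^\ast$-identity $\|\pi(\omega)\xi\|^2 = \langle \pi(\omega^\sharp\star\omega)\xi|\xi\rangle$ does the rest. The points requiring care are that the cyclic vectors used to compute $\|\pi(\omega)\|$ be drawn from the dense subspace $H_0$ (so their coefficient functions land in $\mathfrak N_{h_L}^*$) and that the final inequality be extended from $L_1^\sharp(\G)$ to $L_1(\G)$; both are handled by the density of $H_0$ in $H$ and of $L_1^\sharp(\G)$ in $C^*_{\mathfrak N_{h_L}^*}(\G)$.
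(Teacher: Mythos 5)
Your proposal is correct and follows essentially the same route as the paper: parts (1)--(4) by direct unwinding of the definition of $\|\cdot\|_D$ (with your norm-achieving direct sum in (2) being a cosmetic variant of the paper's point-separation argument), and part (5) by using Proposition \ref{prop:C_00inA(G)} to realize the positive definite coefficient $\varphi^\pi_{\xi,\xi} \in \mathfrak N_{h_L}^*$ as a vector state of $\lambda$ and then invoking the C$^\ast$-identity. No gaps.
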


\begin{proof}
(\ref{ue}). Obvious. 

(\ref{faithfulDrep}).  It is routine to check that the set of $D$-representations of $\G$ separates the points in $C^*_D(\G)$.  Therefore, for each $0 \ne x \in C^*_D(\G)$ we can find a $D$-representation $(\pi_x,H_x)$ of $\G$ such $\pi_x(x) \ne 0$.  Then the direct sum $(\pi_D, H_D) = (\bigoplus_{x} \pi_x,\bigoplus_x H_x)$ is a faithful $\ast$-representation   of $C^*_D(\G)$ and its restriction to $L_1(\G)$ is a $D$-representation by Lemma \ref {lemproduct}.  
 
(\ref{ideals_inclusion}). If $D_1$ and $D_2$ are as above, then from the definition of the norms $\|\cdot\|_{D_i}$, we have 
\[\|\omega\|_{D_1} \le \|\omega\|_{D_2} \qquad (\omega \in L_1 (\G)), \] so the existence of the quotient map $\sigma:C_{D_2}^*(\G) \to C_{D_1}^*(\G)$ follows.  

(\ref{full}). Since $C_u(\hat \G)$ is the universal enveloping C$^\ast$-algebra of $L_1 (\G)$ and every representation  $\pi:L_1(\G) \to \mc B(H)$ is an $C_b(\G)$-representation, the canonical isomorphism $C_{\infty}^*(\G) \cong C_u(\hat \G)$  is immediate.

(\ref{unimodular}).  Let $\G$ be discrete and $C_c(\G) \subseteq D \subseteq  {\frak N}_{h_L} ^*$.   By Proposition \ref{prop:regular_is_C00}, the left regular representation $\lambda$ is a $C_c(\G)$-representation and therefore $\lambda$ extends 
to a surjective $\ast$-homomorphism \[\lambda:C^*_{C_c(\G)}(\G) \to C(\hat \G).\]  Combining this with (\ref{ideals_inclusion}), we obtain a sequence of surjective $\ast$-homomorphisms 
 \[
 C^*_{ {\frak N}_{h_L} ^*}(\G) \to C^*_{D}(\G) \to C^*_{C_c(\G)}(\G)  \xrightarrow{\lambda} C(\hat \G)
 \]
extending the identity map on $L_1(\G)$, and it suffices to show that these maps are isometric.  I.e.,  
 $\|\omega\|_{{\frak N}_{h_L}^*} \le \|\lambda(\omega)\|$ for each $\omega \in L_1^\sharp(\G)  $.  Fix $\omega \in L_1^\sharp(\G)  $ and $\epsilon >0$.  Then there is an ${\frak N}_{h_L} ^*$-representation $\pi:L_1(\G)\to \mc B(H)$ with dense subspace $H_0$ and a unit vector $\xi \in H_0$ so that \[\|\omega\|^2_{ {\frak N}_{h_L} ^*} - \epsilon \le \|\pi(\omega)\xi\|^2 = \langle  \omega^\sharp \star \omega, \varphi^\pi_{\xi,\xi}\rangle.\]
Since $\varphi^\pi_{\xi,\xi} \in {\frak N}_{h_L} ^*$, we can conclude from  Proposition  \ref {prop:C_00inA(G)} that $\varphi^\pi_{\xi,\xi}$ is a norm one positive definite function associated with $\lambda$. 
Therefore, we have 
\[
 \langle  \omega^\sharp \star \omega, \varphi^\pi_{\xi,\xi}\rangle \le \|\lambda(\omega^\sharp \star \omega)\| = \|\lambda(\omega)\|^2.\] 
Since $\epsilon > 0$ is arbitrary, we have $\|\omega\|_{{\frak N}_{h_L} ^*}   \le \|\lambda(\omega)\|$.
\end{proof}

\subsection{The coefficient spaces $B_D(\G)$ and $A_D(\G)$} \label{section:FSA}

Two important objects in our study of $D$-C$^\ast$-algebras will be the spaces $B_D(\G)$ and $A_D(\G)$ of coefficient functions associated with these C$^\ast$-algebras.  In this section, we define these objects and discuss a few of their basic properties.  For locally compact groups, spaces of this type were studied in \cite{KaLaQu}.  

As usual, let $D \subseteq C_b(\G)$ be a linear subspace for which $\|\cdot\|_D$ is non-degenerate on $L_1(\G)$.  Given a representation $(\pi,H_\pi)$ of $\G$, we say that  $\pi$ is \textit{$C^*_D(\G)$-continuous} if $\pi$ extends to a representation of $C^*_D(\G)$ on $H_\pi$.  We let 
\[
B_D(\G) = \Big\{ \varphi^{\pi}_{\xi,\eta} : 
(\pi, H_\pi) \text{ is a $C^*_D(\G)$-continuous representation of $\G$ and } \xi, \eta \in H_\pi \Big\}
\]
be the set of all coefficient functions of $C^*_D(\G)$-continuous representations of $\G$.
Since the direct sum of $C^*_D(\G)$-continuous representations is $C^*_D(\G)$-continuous, it is easy to see that $B_D(\G)$ is a linear subspace of $C_b(\G)$.
There is a natural Banach space norm on $B_D(\G)$ given by 
\[
\|\varphi\|_{B_D(\G)} = \inf\{\|\xi\|\|\eta\| : \varphi = \varphi^\pi_{\xi,\eta},  \ (\pi, H_\pi) \text{ is }\ C^*_D(\G)\mbox{-continuous and } \xi, \eta \in H_\pi\} .
\]  
With this norm, we can  isometrically identify $B_D(\G)$ with the dual Banach space $C^*_D(\G)^*$.
Indeed, if $\hat \mu \in C_D^*(\G)^*$, there exist a $C^*_D(\G)$-continuous representation $(\pi,H)$ of $\G$ and vectors $\xi, \eta \in H$ such that $\hat \mu = \omega_{\xi,\eta}\circ \pi$.   
Since $\|\hat\mu\| = \inf\{\|\xi\|\|\eta\|: \hat \mu = \omega_{\xi,\eta} \circ \pi\}$, the map 
\[
\hat \mu = \omega_{\xi, \eta} \circ \pi \in C^*_D(\G)^* \mapsto 
\varphi^{\pi} _{\xi, \eta}  \in B_D(\G) 
\] 
defines a linear isometric isomorphism from $C^*_D(\G)^*$ onto $B_D(\G)$.

\begin{rem}\label{rem:opposite}
When $D = C_b(\G)$, we know from Lemma \ref{lem:products_coefficients} that $B_{C_b(\G)}(\G)$ is a subalgebra of $C_b(\G)$, while on the other hand the dual space  
$C^*_{\infty}(\G)^* = C_u(\hat \G)^*$ is a completely contractive Banach algebra with product given by $\hat \mu \star \hat \nu = (\hat \mu \otimes \hat \nu)\circ\hat \Delta_u$. With respect to these algebraic structures, $B_{C_b(\G)}(\G)$ and $C^*_{\infty}(\G)^*$ are isometrically \textit{anti-isomorphic}.  Indeed, the linear isometric isomorphism 
  \[
  \hat \mu = \omega_{\xi, \eta} \circ \pi \in C^*_{\infty}(\G)^* \mapsto \varphi^{\pi}_{\xi, \eta} = (\iota \otimes  \omega_{\xi, \eta} \circ \hat \pi)
   \text{\reflectbox{$\W$}} \in B_{C_b(\G)}(\G)
   \]  
is anti-multiplicative since 
 $
(\iota \otimes \hat \Delta_u) \text{\reflectbox{$\W$}} = \text{\reflectbox{$\W$}}_{13} \text{\reflectbox{$\W$}}_{12}$. 
Following the existing conventions for locally compact groups, we write  $B(\G)= B_{C_b(\G)}(\G)$ and we call $B(\G)$ the \textit{Fourier-Stieltjes algebra} of $\G$.  In particular, since $C^*_{\infty}(\G)^*$ is a \textit{dual} Banach algebra \cite[Lemma 8.1]{Da}, it follows that $B(\G)$ is also a dual Banach algebra.  (I.e., the multiplication map on $B(\G)$ is separately weak$^*$ continuous).
\end{rem}
 
It follows from Assumption \ref{assumption} and Proposition \ref {equiv2} that if $D_1 \subseteq D_2 \subseteq C_b(\G)$ are linear subspaces, then there are surjective $\ast$-homomorphisms 
\[
C_u(\hat \G) = C^*_{\infty}(\G) \to C^*_{D_2}(\G) \to C^*_{D_1}(\G) \to C(\hat \G)
\]
extending the identity map on $L_1(\G)$.
Taking adjoints, we obtain weak$^\ast$-continuous isometric inclusions
\[
B_\lambda(\G) \hookrightarrow B_{D_1}(\G) \hookrightarrow B_{D_2}(\G) \hookrightarrow   B(\G),
\]
where $B_\lambda(\G) = \{\varphi^\pi_{\xi,\eta} \in B(\G): (\pi,H_\pi)\text{ is } C_0(\hat \G)\text{-continuous}\} \cong C_0(\hat \G)^*$ is the  \textit{reduced Fourier-Stieltjes} algebra of $\G$. 
We will show in Proposition \ref{prop:algebra} that if $D$ is any subalgebra (or a two-sided ideal) of $C_b(\G)$, then $B_D(\G)$ is a closed subalgebra (or a closed two-sided ideal) of $B(\G)$.

On the other hand, we let
\[
A_D(\G) = \Big\{ \varphi^{\pi}_{\xi,\eta}:  (\pi, H_\pi)   \text{ is a $D$-representation of $\G$} ~ \mbox{and} ~  \xi, \eta \in H_\pi \Big\} 
\]
be the set of all coefficient functions of $D$-representations of $\G$.
Since the direct sum of $D$-representations is again a $D$-representation (cf. Lemma \ref {lemproduct})
and every $D$-representation is $C^*_D(\G)$-continuous, 
$A_D(\G)$ is a linear subspace of $B_D(\G)$. There is a natural norm on $A_D(\G)$ given by 

\[\|\varphi\|_{A_D(\G)} = \inf\{\|\xi\|\|\eta\|: \ \varphi = \varphi^\pi_{\xi,\eta},  (\pi, H_\pi)   \text{ is a $D$-representation of $\G$} ~ \mbox{and} ~  \xi, \eta \in H_\pi\} . 
\] 
In general, we have  $\|\varphi\|_{A_D(\G)} \ge \|\varphi\|_{B_D(\G)} $ for all $\varphi\in A_D(\G)$. Since $A_D(\G)$ clearly separates points in $C^\ast_D(\G)$ (when viewed as a linear subspace of $C^*_D(\G)^*$ under our linear isomorphism $B_D(\G) \cong C_D^*(\G)^*$), we obtain a contractive weak$^*$ dense inclusion  $A_D(\G) \hookrightarrow B_D(\G)$.  

If  $D = C_b(\G)$, it is easy to see that  $A_{C_b(\G)}(\G) = B(\G)$ isometrically, but in general it is not clear whether the inclusion $A_D(\G) \hookrightarrow B_D(\G)$ is always isometric.
In the following proposition, we show that for a discrete quantum group $\G$ and any linear subspace $C_c(\G) \subseteq D \subseteq {\frak N}^*_{h_L}$, $A_D(\G)$ can be isometrically identified with the \emph{Fourier algebra} $A(\G)$ of $\G$, where $A(\G) = \{\varphi^\lambda_{\xi, \eta}: \xi, \eta \in L_2(\G)\}$ is the space of all coefficient functions of the left regular representation, equipped with the 
norm $\|\varphi\|_{A(\G)} = \inf \{\|\xi\| \|\eta\|: \varphi = \varphi^\lambda_{\xi, \eta}, \xi, \eta \in L_2(\G)\}$.
With this norm, $A(\G)$ is isometrically isomorphic to the predual of $L_\infty(\hat \G)$ and we have the isometric inclusions 
\[
 A(\G)  \hookrightarrow B_\lambda(\G) \hookrightarrow B(\G). 
\]
 
\begin{prop} \label{AG}
Let $\G$ be a discrete quantum group and let $C_c(\G) \subseteq D \subseteq L_\infty(\G)$ be a linear subspace .
\begin{itemize}
\item [(1)]  For any  $ \varphi  \in A(\G)$,  we have 
\[
\|\varphi\|_{A(\G)}  = \|\varphi\|_{A_{C_c(\G)}(\G)}  = \|\varphi\|_{A_D(\G)} = \|\varphi\|_{B(\G)} .\]
\item[(2)] If $D \subseteq {\frak N}^*_{h_L}$, then 
$A_D(\G)$ is  isometrically isomorphic to $A(\G) $.  In this case, we have the  isometric inclusion $A_{D}(\G) \hookrightarrow B_{D}(\G)$.
\end{itemize}
\end{prop}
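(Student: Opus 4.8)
The plan is to prove (1) by squeezing all four quantities between $\|\varphi\|_{A(\G)}$ and itself, using only the inclusions recorded before the proposition together with Proposition~\ref{prop:regular_is_C00}. Since $(\lambda,L_2(\G))$ is a $C_c(\G)$-representation and $C_c(\G)\subseteq D$, the left regular representation is simultaneously a $C_c(\G)$- and a $D$-representation; moreover every $C_c(\G)$-representation is automatically a $D$-representation. Hence each factorization $\varphi=\varphi^\lambda_{\xi,\eta}$ is admissible in the infima defining $\|\cdot\|_{A_{C_c(\G)}(\G)}$ and $\|\cdot\|_{A_D(\G)}$, and enlarging the class of representations only shrinks the infimum, so $\|\varphi\|_{A_D(\G)}\le\|\varphi\|_{A_{C_c(\G)}(\G)}\le\|\varphi\|_{A(\G)}$. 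For the reverse direction I would chain the contractive inclusion $A_D(\G)\hookrightarrow B_D(\G)$ with the isometric inclusions $B_D(\G)\hookrightarrow B(\G)$ and $A(\G)\hookrightarrow B(\G)$ to get $\|\varphi\|_{A(\G)}=\|\varphi\|_{B(\G)}=\|\varphi\|_{B_D(\G)}\le\|\varphi\|_{A_D(\G)}$. Concatenating the two chains forces equality throughout.

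For (2) the inclusion $A(\G)\subseteq A_D(\G)$ is free (as $\lambda$ is a $D$-representation), so the content is the reverse inclusion $A_D(\G)\subseteq A(\G)$, isometrically, under the hypothesis $D\subseteq{\frak N}^*_{h_L}$. Fix a $D$-representation $(\pi,H_\pi)$ with dense subspace $H_0$ on which the coefficient functions lie in $D$. For $\xi',\eta'\in H_0$ we have $\varphi^\pi_{\xi',\eta'}\in D\subseteq{\frak N}^*_{h_L}$, so Proposition~\ref{prop:C_00inA(G)} places $\varphi^\pi_{\xi',\eta'}$ in $A(\G)$; applying part (1) then yields the key estimate $\|\varphi^\pi_{\xi',\eta'}\|_{A(\G)}=\|\varphi^\pi_{\xi',\eta'}\|_{A_D(\G)}\le\|\xi'\|\,\|\eta'\|$.

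The crux is to promote this estimate from $H_0\times H_0$ to all of $H_\pi\times H_\pi$. Given arbitrary $\xi,\eta\in H_\pi$, choose $\xi_n,\eta_n\in H_0$ with $\xi_n\to\xi$ and $\eta_n\to\eta$. The map $(\alpha,\beta)\mapsto\varphi^\pi_{\alpha,\beta}$ is linear in $\alpha$ and conjugate-linear in $\beta$, so the key estimate shows $\{\varphi^\pi_{\xi_n,\eta_n}\}$ is Cauchy in the Banach space $A(\G)$ (complete, being the predual of $L_\infty(\hat\G)$), whence it converges to some $\psi\in A(\G)$ with $\|\psi\|_{A(\G)}\le\|\xi\|\,\|\eta\|$. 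On the other hand $\varphi^\pi_{\xi_n,\eta_n}=(\iota\otimes\omega_{\xi_n,\eta_n})U_\pi\to\varphi^\pi_{\xi,\eta}$ in the $C_b(\G)$-norm; since the embedding $A(\G)\hookrightarrow C_b(\G)$ is contractive (because $\|\varphi^\pi_{\xi,\eta}\|_\infty\le\|\xi\|\,\|\eta\|$ for any unitary representation), uniqueness of limits forces $\psi=\varphi^\pi_{\xi,\eta}$. Thus $\varphi^\pi_{\xi,\eta}\in A(\G)$, giving $A_D(\G)=A(\G)$ as sets, and by (1) the identification is isometric. Finally the isometric inclusion $A_D(\G)\hookrightarrow B_D(\G)$ drops out by comparing $\|\varphi\|_{A_D(\G)}=\|\varphi\|_{A(\G)}=\|\varphi\|_{B(\G)}=\|\varphi\|_{B_D(\G)}$, the last equality being the isometric inclusion $B_D(\G)\hookrightarrow B(\G)$. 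The main obstacle is exactly this limiting step: one must match the abstract $A(\G)$-limit $\psi$ with the concrete coefficient function $\varphi^\pi_{\xi,\eta}$, which hinges on the completeness of $A(\G)$ together with the continuity of its inclusion into $C_b(\G)$.
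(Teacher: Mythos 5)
Your proof is correct and follows essentially the same route as the paper's: part (1) by squeezing all four quantities between the $A(\G)$- and $B(\G)$-norms using the isometric inclusions $A(\G)\hookrightarrow B_\lambda(\G)\hookrightarrow B(\G)$ and $B_D(\G)\hookrightarrow B(\G)$, and part (2) by approximating $\xi,\eta$ from the dense subspace $H_0$, invoking Proposition \ref{prop:C_00inA(G)} to place the coefficient functions in $A(\G)$, and passing to the limit of a Cauchy sequence in $A(\G)$. The only cosmetic difference is that you identify the limit $\psi$ with $\varphi^{\pi}_{\xi,\eta}$ via the contractive embedding of $A(\G)$ into $C_b(\G)$, whereas the paper does so by uniqueness of limits in $A_D(\G)$ itself; both are valid.
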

\begin{proof}
Let us first recall from Proposition  \ref {prop:regular_is_C00}  that 
 the left regular representation $(\lambda, L_2(\G))$ is a $C_c(\G)$-representation.
Then we have 
\[
A(\G) \subseteq A_{C_c(\G)}(\G) \subseteq A_D(\G) \subseteq B(\G)
\]
 and 
\[
\|\varphi\|_{A(\G)}  \ge \|\varphi\|_{A_{C_c(\G)}(\G)}  \ge \|\varphi\|_{A_D(\G)} \ge \|\varphi\|_{B(\G)} \qquad (\varphi \in A(\G)).
\]
(1) now follows from the isometric inclusion 
$A(\G) \hookrightarrow B_\lambda(\G) \hookrightarrow B(\G)$.

To prove (2), it suffices to show that if  $C_c(\G) \subseteq D \subseteq {\frak N}^*_{h_L}$ and $\varphi \in A_D(\G)$, then $\varphi\in A(\G)$.
Now let us assume $\varphi = \varphi^\pi_{\xi, \eta}$ for some 
$D$-representation $(\pi, H)$ and   $\xi, \eta \in H$.
We can  choose two sequences of vectors $(\xi_n)$ and $(\eta_n)_n$ in the dense subspace $H_0\subseteq H$ such that 
$\xi_n \to \xi$ and $\eta_n \to \eta$. Then 
 $\varphi_n = \varphi^\pi_{\xi_n, \eta_n}\in D$ is a  sequence of $D$-coefficient functions such that 
 $\|\varphi_n - \varphi\|_{A_D(\G)} \to 0$.  Moreover, from Proposition \ref {prop:C_00inA(G)} we know that each $\varphi_n \in D$ is a coefficient function of $(\lambda, L_2(\G))$ and thus is contained in $A(\G)$.  Since $(\varphi_n)_n$ is a Cauchy sequence in $A_D(\G)$,
we conclude from (1) that $(\varphi_n)$ is also a Cauchy sequence in  $A(\G)$ (since $\|\varphi_n - \varphi_m\|_{A(\G)} = 
\|\varphi_n - \varphi_m\|_{A_D(\G)}$).
Therefore, there exists $\psi \in A(\G)$ such that $\|\varphi_n -\psi\|_{A(\G)} \to 0$. This implies that $\psi \in A_D(\G)$ and 
$\varphi_n\to \psi$ in $A_D(\G)$.  So we must have $\varphi = \psi \in A(\G)$.
\end{proof}

We conclude this subsection with the following result which shows that whenever the linear subspace $D \subseteq C_b(\G)$ under consideration is a subalgebra or a two-sided ideal, the
corresponding coefficient spaces $A_D(\G)$ and $B_D(\G)$ inherit these algebraic structures as linear subspaces of $B(\G)$. 

\begin{prop} \label{prop:algebra}
Let $\G$ be a locally compact quantum group and $D \subseteq C_b(\G)$ be a linear subspace (satisfying Assumption \ref{assumption}).
\begin{itemize}
\item [(1)]  If $D$ is a subalgebra of $C_b(\G)$, then  $A_D(\G)$ and $B_D(\G)$ are subalgebras of $B(\G)$.
\item [(2)]  If $D$ is a two-sided ideal in  $C_b(\G)$, then $A_D(\G)$ and $B_D(\G)$ are two-sided ideals in $B(\G)$.
\end{itemize}
In these cases, we call $A_D(\G)$ the \emph{$D$-Fourier algebra} of $\G$, and call $B_D(\G)$ the \emph{$D$-Fourier-Stieltjes algebra} of $\G$.
\end{prop}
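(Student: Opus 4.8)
The plan is to prove the statements for $A_D(\G)$ directly from the coefficient-function formula of Lemma~\ref{lem:products_coefficients} and the permanence properties of $D$-representations in Lemma~\ref{lemproduct}, and then to deduce the corresponding statements for $B_D(\G)$ by realizing $B_D(\G)$ as the weak$^*$-closure of $A_D(\G)$ inside the dual Banach algebra $B(\G)$.

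For $A_D(\G)$, I first note that every element of $A_D(\G)$ is a single coefficient function $\varphi^\pi_{\xi,\eta}$ of some $D$-representation $\pi$: a finite sum of coefficient functions of $D$-representations is a coefficient function of their direct sum, which is again a $D$-representation by Lemma~\ref{lemproduct}(1). Given $\varphi^\pi_{\xi,\eta}$ and $\varphi^\sigma_{\xi',\eta'}$ in $A_D(\G)$, applying Lemma~\ref{lem:products_coefficients} to the vector functionals $\omega_{\xi,\eta}$ and $\omega_{\xi',\eta'}$ identifies their product in $B(\G)$, i.e.\ the product inherited from $C_b(\G)$, with $\varphi^{\pi\otop\sigma}_{\xi\otimes\xi',\,\eta\otimes\eta'}$. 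When $D$ is a subalgebra, Lemma~\ref{lemproduct}(2) shows $\pi\otop\sigma$ is again a $D$-representation, so the product lies in $A_D(\G)$, giving that $A_D(\G)$ is a subalgebra. When $D$ is a two-sided ideal and $\varphi^\sigma_{\xi',\eta'}\in B(\G)$ is the coefficient function of an arbitrary representation $\sigma$, Lemma~\ref{lemproduct}(3) shows that both $\pi\otop\sigma$ and $\sigma\otop\pi$ are $D$-representations, so both $\varphi^\pi_{\xi,\eta}\varphi^\sigma_{\xi',\eta'}$ and $\varphi^\sigma_{\xi',\eta'}\varphi^\pi_{\xi,\eta}$ return to $A_D(\G)$; hence $A_D(\G)$ is a two-sided ideal in $B(\G)$.

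For $B_D(\G)$, a direct argument of this kind is not available, since being $C^*_D(\G)$-continuous is a norm condition and I do not expect the tensor product of two $C^*_D(\G)$-continuous representations to remain $C^*_D(\G)$-continuous. Instead, I would use the isometric identification $B_D(\G)\cong C^*_D(\G)^*$ together with the canonical surjection $C_u(\hat\G)\to C^*_D(\G)$: dualizing this surjection exhibits the inclusion $B_D(\G)\hookrightarrow B(\G)\cong C_u(\hat\G)^*$ as the annihilator of its kernel, so $B_D(\G)$ is a weak$^*$-closed subspace of $B(\G)$ and the relative weak$^*$ topology on $B_D(\G)$ coincides with its intrinsic one as a dual space. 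Since $A_D(\G)$ is weak$^*$-dense in $B_D(\G)$, it follows that $B_D(\G)$ is exactly the weak$^*$-closure of $A_D(\G)$ in $B(\G)$.

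Finally, recall from Remark~\ref{rem:opposite} that $B(\G)$ is a dual Banach algebra, so its multiplication is separately weak$^*$-continuous. A routine approximation then shows that the weak$^*$-closure of a subalgebra is a subalgebra and the weak$^*$-closure of a two-sided ideal is a two-sided ideal: to multiply two elements of the closure one fixes the first factor in $A_D(\G)$, uses separate continuity in the second variable to land in the weak$^*$-closed set $B_D(\G)$, and then lets the first factor vary and repeats the argument. Combined with the computation for $A_D(\G)$, this yields the assertions for $B_D(\G)$. I expect the main obstacle to be precisely this passage to $B_D(\G)$: the algebraic conclusion is forced by the weak$^*$-topology bookkeeping, namely identifying $B_D(\G)$ as a weak$^*$-closed subspace of $B(\G)$ carrying the compatible weak$^*$ topology, rather than by any representation-theoretic input, which is what makes the dual-Banach-algebra machinery indispensable here.
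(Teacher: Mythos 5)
Your proposal is correct and follows essentially the same route as the paper: the statements for $A_D(\G)$ come from Lemma~\ref{lem:products_coefficients} together with the permanence properties in Lemma~\ref{lemproduct}, and the statements for $B_D(\G)$ then follow from weak$^*$-density of $A_D(\G)$ in $B_D(\G)$ and the separate weak$^*$-continuity of multiplication in the dual Banach algebra $B(\G)$. Your explicit identification of $B_D(\G)$ as a weak$^*$-closed subspace of $B(\G)$ (the annihilator of the kernel of $C_u(\hat\G)\to C^*_D(\G)$) is exactly the detail the paper suppresses as a ``routine calculation,'' and it is the right way to make that step precise.
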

\begin{proof} 
(1). It follows from Lemma \ref {lemproduct}  that if $D$ is a subalgebra of $C_b(\G)$, then the tensor product of $D$-representations  is again a $D$-representation.  We therefore conclude from Lemma \ref {lem:products_coefficients} that $A_D(\G)$ is a subalgebra of $B(\G)$.  Since $A_D(\G)$ is weak$^*$ dense in $B_D(\G)$ and the isometric inclusion $B_D(\G) \hookrightarrow B(\G)$ is weak$^\ast$-continuous, the fact that $B_D(\G)$ is a subalgebra of $B(\G)$ is just a routine calculation using the separate weak$^*$-continuity of multiplication in the dual Banach algebra $B(\G)$. 

(2). If $D$ is a two-sided ideal in $C_b(\G)$, then $A_D(\G)$ is a two-sided ideal in $B(\G)$ by Lemma \ref {lemproduct}. The fact that $B_D(\G)$ is a two-sided ideal
  in $B(\G)$ follows from the same reasoning as in (1). 
 \end{proof}

\subsection{$D$-C$^\ast$-algebras as compact quantum groups}

In this section (unless stated otherwise), $\G$ will be a discrete quantum group with compact dual quantum group $\hat \G$.  

Let $\|\cdot\|$ be a C$^\ast$-norm on the algebra of polynomial functions $\Pol(\hat \G)$ and denote by $\hat A$ the unital C$^\ast$-algebra obtained by completing $\Pol(\hat \G)$ with respect to this norm.  Following \cite{KySo}, we call $\|\cdot\|$ a \textit{quantum group norm} on $\Pol(\hat \G)$ if the coproduct $\hat \Delta_u: \Pol(\hat \G) \to \Pol(\hat \G) \odot \Pol(\hat \G)$ extends continuously to a $\ast$-homomorphism $\hat \Delta_{\hat A}: \hat A \to \hat A \otimes \hat A$.  In this case, the pair $(\hat A , \hat \Delta_{\hat A})$ then becomes a compact quantum group in the sense of Woronowicz \cite{Wo2}.  

The following proposition shows that $D$-C$^\ast$-algebra norms provide natural examples of quantum group norms whenever $C_c(\G) \subseteq D \subseteq L_\infty(\G)$ is a subalgebra.  

\begin{prop} \label{prop:cqg_structure}
Let $\G$ be a discrete quantum group and $C_c(\G) \subseteq D \subseteq L_\infty(\G)$ a subalgebra.  Then $\|\cdot\|_D$ is a quantum group norm on $\Pol(\hat \G)$. 
\end{prop}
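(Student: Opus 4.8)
The plan is to reduce everything to a single norm estimate. By definition, $\|\cdot\|_D$ is a quantum group norm precisely when the restricted universal coproduct $\hat\Delta := \hat\Delta_u|_{\Pol(\hat\G)}$ extends continuously to a $\ast$-homomorphism $\hat\Delta_D : C_D(\hat\G) \to C_D(\hat\G)\otimes C_D(\hat\G)$ (minimal tensor product). Since $\Pol(\hat\G)$ is a dense $\ast$-subalgebra of $C_D(\hat\G)$ and $\hat\Delta$ is already a $\ast$-homomorphism into $\Pol(\hat\G)\odot\Pol(\hat\G) \subseteq C_D(\hat\G)\otimes C_D(\hat\G)$, the whole statement will follow from the estimate
\[
\|\hat\Delta(\hat a)\|_{C_D(\hat\G)\otimes C_D(\hat\G)} \le \|\hat a\|_D \qquad (\hat a \in \Pol(\hat\G)).
\]

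To produce this estimate I would fix the faithful $D$-representation $\pi_D : C^*_D(\G) \to \mc B(H_D)$ supplied by Proposition \ref{equiv2}(\ref{faithfulDrep}), and write $\hat\pi_D$ for the associated faithful $\ast$-representation of $C_D(\hat\G)$ on $H_D$ under the identification of Remark \ref{rem:discrete/compact}. Because $D$ is a \emph{subalgebra}, Lemma \ref{lemproduct}(2) guarantees that the tensor product $\pi_D \otop \pi_D$ is again a $D$-representation; hence its dual $\ast$-representation $\widehat{\pi_D \otop \pi_D}$ of $\Pol(\hat\G)$ is $C^*_D(\G)$-continuous and satisfies $\|\widehat{\pi_D\otop\pi_D}(\hat a)\| \le \|\hat a\|_D$ for all $\hat a \in \Pol(\hat\G)$.

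The heart of the argument — and the step I expect to be the main obstacle — is to identify $(\hat\pi_D \otimes \hat\pi_D)\circ\hat\Delta_u$ with $\widehat{\pi_D \otop \pi_D}$ up to the flip. Applying $\iota \otimes \hat\pi_D \otimes \hat\pi_D$ to the bicharacter relation $(\iota \otimes \hat\Delta_u)\text{\reflectbox{$\W$}} = \text{\reflectbox{$\W$}}_{13}\text{\reflectbox{$\W$}}_{12}$ from \eqref{bichar_relation} yields the unitary $U_{\pi_D,13}U_{\pi_D,12}$, whereas the defining unitary of $\pi_D\otop\pi_D$ is $U_{\pi_D,12}U_{\pi_D,13}$. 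Conjugating by the flip $\Sigma$ on $H_D\otimes H_D$ interchanges the two legs and hence these two unitaries, so the representations $(\hat\pi_D\otimes\hat\pi_D)\circ\hat\Delta_u$ and $\widehat{\pi_D\otop\pi_D}$ of $\Pol(\hat\G)$ are unitarily equivalent, and thus have identical operator norms. I would carry out these leg-numbering computations carefully, since the exact placement of the legs — and the resulting appearance of the flip (equivalently, the opposite coproduct, consistent with the anti-isomorphism recorded in Remark \ref{rem:opposite}) — is where an error would most easily slip in.

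Finally, using that $\hat\pi_D\otimes\hat\pi_D$ is isometric on the minimal tensor product $C_D(\hat\G)\otimes C_D(\hat\G)$ (being the minimal tensor product of two faithful representations), I would conclude, for $\hat a \in \Pol(\hat\G)$,
\[
\|\hat\Delta(\hat a)\|_{C_D(\hat\G)\otimes C_D(\hat\G)} = \|(\hat\pi_D\otimes\hat\pi_D)(\hat\Delta(\hat a))\| = \|\widehat{\pi_D\otop\pi_D}(\hat a)\| \le \|\hat a\|_D,
\]
which is the required estimate and completes the proof.
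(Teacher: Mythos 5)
Your proposal is correct and follows essentially the same route as the paper: both apply $(\iota \otimes \hat\pi_D \otimes \hat\pi_D)$ to the bicharacter relation $(\iota \otimes \hat \Delta_u) \text{\reflectbox{$\W$}} = \text{\reflectbox{$\W$}}_{13} \text{\reflectbox{$\W$}}_{12}$ and exploit that $D$ is a subalgebra. The only cosmetic difference is that you make the flip explicit and invoke Lemma \ref{lemproduct}(2), whereas the paper checks directly that the coefficient functions of the representation associated with $(\hat\pi_D \otimes \hat\pi_D)\circ \hat\Delta_u$ are the products $\varphi^{\pi_D}_{\xi_2,\eta_2}\varphi^{\pi_D}_{\xi_1,\eta_1} \in D$, which encodes exactly the same leg interchange.
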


\begin{proof}
Let $\hat \pi_D:C_D(\hat \G) \to \mc B(H_D)$ be a faithful $\ast$-representation such that the corresponding map $\pi_D:L_1(\G) \to \mc B(H_D)$ is a $D$-representation with canonical dense subspace $H_0 \subset H_D$.
 We need to show that the  $\ast$-homomorphism 
\[\hat \Delta_D:= (\hat \pi_D \otimes  \hat \pi_D) \circ \hat \Delta_u: \Pol(\hat \G) \to \mc B(H_D \otimes H_D)\] is continuous for the $\|\cdot\|_D$-norm on $\Pol(\hat\G)$.  
For this, it suffices to show that the representation $\rho_D:L_1(\G) \to \mc B(H_D \otimes H_D)$ corresponding to $\hat \Delta_D$ is a $D$-representation.  To this end, let $U \in M(C_0(\G) \otimes \mc K(H_D))$ and $V \in M(C_0(\G) \otimes \mc K(H_D \otimes H_D)) $ be the unitary representations of $\G$ corresponding to  $\pi_D$ and $\rho_D$, respectively.  Since $(\iota \otimes \hat \Delta_u)(\text{\reflectbox{$\W$}}) = \text{\reflectbox{$\W$}}_{13}\text{\reflectbox{$\W$}}_{12}$, we have 
\[V = (\iota \otimes \hat \Delta_D)\text{\reflectbox{$\W$}} = (\iota \otimes \hat \pi_D \otimes \hat \pi_D)\text{\reflectbox{$\W$}}_{13}\text{\reflectbox{$\W$}}_{12} = U_{13}U_{12}. 
\]  
In particular, for all $\xi_1,\xi_2, \eta_1, \eta_2 \in H_0$ we have
\begin{align} \label{eqn:multduality}\varphi^{\rho_D}_{\xi_1\otimes \xi_2, \eta_1\otimes \eta_2} = (\iota \otimes \omega_{\xi_1,\eta_1} \otimes \omega_{\xi_2,\eta_2})U_{13}U_{12} = \varphi^{\pi_D}_{\xi_2,\eta_2}\varphi^{\pi_D}_{\xi_1,\eta_1} \in D.
\end{align}  Therefore $\rho_D$ is a $D$-representation with canonical dense subspace given by $H_0 \odot H_0$.       
\end{proof}

\begin{rem} \label{rem:coactions}
When $C_c(\G) \subseteq D_1 \subseteq D_2 \subseteq L_\infty(\G)$ are two-sided ideals, then the above proof can be easily adapted to show that $\hat \Delta_u:\Pol(\hat \G) \to \Pol(\hat \G)  \odot \Pol(\hat \G) $ extends to a unital $\ast$-homomorphism 
\[
\hat \delta:C_{D_1}(\hat \G) \to C_{D_2}(\hat \G)  \otimes C_{D_1}(\hat \G)
\]
 such that $(\iota \otimes \hat \delta) \circ \hat \delta = (\hat \Delta_{D_2} \otimes \iota)\hat \delta$.  That is, $\hat \delta$ defines a   \textit{left action} of the compact quantum group $ (C_{D_2}(\hat \G), \hat \Delta_{D_2})$ on the unital C$^\ast$-algebra $C_{D_1}(\hat \G)$. 
To prove this result, one just needs to replace $\pi_D$ in the above argument by appropriate $D_i$-representations $\pi_{D_i}$ such that the corresponding representations $\hat \pi_{D_i}:C_{D_i}(\hat \G) \to \mc B(H_{D_i})$ are faithful. Then $\hat \delta|_{\Pol(\hat \G)} = (\hat \pi_{D_2} \otimes \hat \pi_{D_1}) \circ \hat \Delta_u$.  
 We leave the details to the reader.
\end{rem}

Using Proposition \ref{prop:cqg_structure}, we can obtain a refinement of Proposition \ref{prop:algebra} by concluding that the $D$-Fourier-Stieltjes algebras are completely contractive Banach algebras.

\begin{cor}\label{cor:FS_BA} 
For each subalgebra  $C_c(\G) \subseteq D \subseteq L_\infty(\G)$, $B_D(\G)$ is a completely contractive Banach algebra (with respect to its canonical operator space structure as the dual space of $C_D(\hat \G)$).
\end{cor}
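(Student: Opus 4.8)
The plan is to reduce the statement to the general principle, already invoked in Remark \ref{rem:opposite} for $C_u(\hat\G)^* = C^*_\infty(\G)^*$, that the dual of a Hopf C$^\ast$-algebra equipped with the convolution product dual to its coproduct is a completely contractive Banach algebra. The only new input needed is that under the present hypothesis ($C_c(\G)\subseteq D\subseteq L_\infty(\G)$ with $D$ a subalgebra) the space $C_D(\hat\G)$ actually carries such a coproduct, and this is precisely the content of Proposition \ref{prop:cqg_structure}. So first I would record the two structural facts: by Proposition \ref{prop:cqg_structure}, $\|\cdot\|_D$ is a quantum group norm on $\Pol(\hat\G)$, so $(C_D(\hat\G),\hat\Delta_D)$ is a compact quantum group and $\hat\Delta_D\colon C_D(\hat\G)\to C_D(\hat\G)\otimes C_D(\hat\G)$ is a unital $\ast$-homomorphism into the minimal tensor product, hence a complete contraction; and, from Section \ref{section:FSA} together with Remark \ref{rem:discrete/compact}, the identification $B_D(\G)\cong C^*_D(\G)^*\cong C_D(\hat\G)^*$ that fixes the "canonical operator space structure'' referred to in the statement.

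Next I would exhibit the convolution multiplication on $C_D(\hat\G)^*$ as a composite of complete contractions. The product $\hat\mu\star\hat\nu=(\hat\mu\otimes\hat\nu)\circ\hat\Delta_D$ factors as $m=\hat\Delta_D^{\,*}\circ\Theta$, where
\[
\Theta\colon C_D(\hat\G)^*\widehat\otimes C_D(\hat\G)^*\longrightarrow \bigl(C_D(\hat\G)\otimes C_D(\hat\G)\bigr)^*, \qquad \hat\mu\otimes\hat\nu\mapsto\bigl(a\otimes b\mapsto \hat\mu(a)\hat\nu(b)\bigr),
\]
is the canonical pairing map and $\hat\Delta_D^{\,*}$ is the Banach-space adjoint of $\hat\Delta_D$. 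Since the adjoint of a complete contraction is a complete contraction, the entire difficulty is concentrated in the complete contractivity of $\Theta$. This is a standard fact of operator space theory --- the minimal operator space tensor norm is the smallest cross norm, so pairing against it is completely contractive out of the projective tensor product --- and it is exactly the fact underlying the corresponding assertion in Remark \ref{rem:opposite}. Granting it, $m$ is a complete contraction, so $(C_D(\hat\G)^*,\star)$ is a completely contractive Banach algebra.

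Finally I would transport the conclusion back to $B_D(\G)$ with its pointwise product. By Proposition \ref{prop:algebra}, $B_D(\G)$ is a subalgebra of $B(\G)$ under pointwise multiplication, and the argument of Remark \ref{rem:opposite} applies verbatim with $\hat\Delta_u$ replaced by $\hat\Delta_D$ (using the anti-multiplicativity recorded in \eqref{bichar_relation}): the complete isometry $B_D(\G)\cong C_D(\hat\G)^*$ carries the pointwise product of $B_D(\G)$ to the \emph{opposite} of the convolution $\star$. Because the flip is a complete isometry on the operator space projective tensor product, passing to the opposite algebra preserves complete contractivity of the multiplication, so $B_D(\G)$ is itself a completely contractive Banach algebra. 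I expect the only genuinely non-formal step to be the complete contractivity of $\Theta$; everything else is bookkeeping with Proposition \ref{prop:cqg_structure}, Proposition \ref{prop:algebra}, and Remark \ref{rem:opposite}, reflecting that the real work was already done in establishing the compact quantum group structure on $C_D(\hat\G)$.
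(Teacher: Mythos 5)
Your proposal is correct and follows essentially the same route as the paper: both arguments rest on Proposition \ref{prop:cqg_structure} to obtain the coproduct $\hat\Delta_D$, on the anti-multiplicativity of the identification $B_D(\G)\cong C_D(\hat\G)^*$ coming from $(\iota\otimes\hat\Delta_u)\text{\reflectbox{$\W$}}=\text{\reflectbox{$\W$}}_{13}\text{\reflectbox{$\W$}}_{12}$, and on the fact that the multiplication is the adjoint of the complete contraction $\sigma\circ\hat\Delta_D$ (the paper writes this in one step as $(\sigma\hat\Delta_D)^*$ on $C_D(\hat\G)^*\widehat\otimes C_D(\hat\G)^*$, which is exactly your $\hat\Delta_D^*\circ\Theta$ composed with the flip). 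Your separation of the pairing map $\Theta$ and the opposite-algebra step is only a more explicit bookkeeping of the same argument.
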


\begin{proof}
Let $\varphi, \psi \in B_D(\G)$ and let $\hat\mu, \hat\nu \in C_D(\hat \G)^*$ be the corresponding linear functionals under the completely isometric identification $B_D(\G) \cong C_D(\hat \G)^*$. 
In this case, we can express $\varphi = (\iota \otimes \hat\mu \circ \hat \pi)\text{\reflectbox{$\W$}}$ and $\psi = (\iota \otimes \hat\nu \circ \hat \pi)\text{\reflectbox{$\W$}}$, 
 where $\hat \pi:C_u(\hat \G) \to  C_D(\hat \G)$ is the canonical quotient map.  Now consider the product $\varphi \psi$, which belongs to $B_D(\G)$ by Proposition \ref{prop:algebra}.  Since the coproduct $\hat \Delta_D$ satisfies $\hat \Delta_D \circ \hat \pi = (\hat \pi \otimes \hat \pi) \circ \hat \Delta_u$, we obtain
\begin{align*}\varphi \psi &= (\iota \otimes (\hat \mu \circ \hat \pi) \otimes (\hat \nu
\circ \hat \pi)) (\text{\reflectbox{$\W$}}_{12}\text{\reflectbox{$\W$}}_{13}) =(\iota \otimes (\hat\mu \circ \hat \pi) \otimes (\hat\nu
\circ \hat \pi)) (\iota \otimes \sigma) (\text{\reflectbox{$\W$}}_{13}\text{\reflectbox{$\W$}}_{12}) \\
&= (\iota \otimes( \hat\mu \otimes \hat \nu)\circ(\sigma (\hat \pi \otimes \hat \pi)\hat \Delta_u))(\text{\reflectbox{$\W$}})= (\iota \otimes( \hat\mu \otimes \hat\nu)\circ(\sigma \hat \Delta_D \circ \hat \pi))(\text{\reflectbox{$\W$}}),
\end{align*}
 where $\sigma$ denotes the tensor flip automorphism.  This shows that under the identification $B_D(\G) \cong C_D(\hat \G)^*$, the multiplication map $m:B_D(\G) \odot B_D(\G) \to B_D(\G)$, which corresponds to the map $(\sigma \hat \Delta_D)^*:C_D(\hat\G)^*\widehat{\otimes }C_D(\hat\G)^* \to C_D(\hat\G)^*$, is completely contractive since $\sigma$ and $\hat \Delta_D$ are both $\ast$-homomorphisms.  Therefore $B_D(\G)$ is a completely contractive Banach algebra.
\end{proof}

\begin{rem} \label{rem:lccase}
Certain results from this section admit partial generalizations to the locally compact case.  Let  $\G$ be a (not necessarily discrete) locally compact quantum group and $D \subseteq C_b(\G)$ a subalgebra satisfying Assumption \ref{assumption}.  Then $B_D(\G)$ is a subalgebra of $B(\G)$ by Proposition \ref{prop:algebra}. Since $B(\G)$ is completely isometrically anti-isomorphic to $C_u(\hat \G)^*$ (with its canonical product $(\hat \Delta_u)^*$), we can use \cite[Lemma 3.15]{KaLaQu} to obtain a co-associative comultiplication \[\hat \Delta_D:C^*_D(\G) \to M(C^*_D(\G) \otimes C^*_D(\G)) \quad \text{such that} \quad  \hat \Delta_D \circ \hat \pi = (\hat \pi \otimes \hat \pi)\circ \hat \Delta_u,\]
where $\hat \pi:C_u(\hat \G) \to C^*_D(\G)$ is the canonical quotient map.  
We note, however, that when $\G$ is not discrete or compact it is not clear whether the pair $(C^*_D(\G), \hat \Delta_D)$ can be made into a genuine locally compact quantum group since we do not know how to construct an antipode in general.  
\end{rem}

\subsection{The Haagerup property and $C_0(\G)$-representations} \label{section:HAP}

In \cite{DaFiSkWh}, the definition of the Haagerup property for locally compact groups was extended to the setting of locally compact quantum groups.  In particular, one of the (several equivalent)  definitions of the \textit{Haagerup property} for a locally compact quantum group $\G$ is the existence of an approximate identity $(\varphi_i)_{i \in I}$ for $C_0(\G)$ consisting of norm one positive definite functions.  The following theorem was established for discrete groups by Brown and Guentner \cite{BrGu12}.  In the quantum case, the proof is similar.

\begin{thm} \label{thm:HAP}
For a locally compact quantum group $\G$, the following conditions are equivalent.
\begin{enumerate}
\item \label{HAP} $\G$ has the Haagerup property.
\item \label{C*equality} The canonical quotient map $C_u(\hat \G) \to C^*_{C_0(\G)}(\G)$ is an isomorphism.
\end{enumerate}
\end{thm}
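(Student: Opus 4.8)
The plan is to reduce condition (2) to the equality of C$^\ast$-norms $\|\cdot\|_{C_0(\G)} = \|\cdot\|_\infty$ on $L_1(\G)$ (the canonical quotient $C_u(\hat\G) = C^*_\infty(\G) \to C^*_{C_0(\G)}(\G)$ from Proposition \ref{equiv2}(\ref{ideals_inclusion}) is always surjective and norm-decreasing, so it is an isomorphism precisely when it is isometric), and then to transport the Haagerup property across this equality using that $C_0(\G)$ is a two-sided ideal in $C_b(\G)$ together with the standard dictionary relating weak containment of the trivial representation, almost invariant vectors, and the mixing-representation characterization of the Haagerup property from \cite{DaFiSkWh}.

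For (\ref{HAP}) $\Rightarrow$ (\ref{C*equality}) I would begin with an approximate identity $(\varphi_i)_{i\in I}$ for $C_0(\G)$ of norm one positive definite functions. Applying the GNS construction to each $\varphi_i$ gives a cyclic representation $(\sigma_i,H_i)$ with cyclic unit vector $\xi_i$ and $\varphi_i = \varphi^{\sigma_i}_{\xi_i,\xi_i}$; by Lemma \ref{lem:leftright_action} the coefficients on the cyclic dense subspace have the form $\omega \star \varphi_i \star (\omega')^\sharp$, and since $C_0(\G)$ is an $L_1(\G)$-sub-bimodule of $C_b(\G)$, each $\sigma_i$ is in fact a $C_0(\G)$-representation. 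Now fix an arbitrary representation $(\pi,H)$. As $C_0(\G)$ is a two-sided ideal, Lemma \ref{lemproduct}(3) shows $\pi \otop \sigma_i$ is again a $C_0(\G)$-representation, whence $\|(\pi \otop \sigma_i)(\omega)\| \le \|\omega\|_{C_0(\G)}$ for all $\omega$. The key identity, from Lemma \ref{lem:products_coefficients}, is
\[
\langle (\pi \otop \sigma_i)(\omega)(\eta \otimes \xi_i) \,|\, \eta' \otimes \xi_i\rangle = \langle \omega, \varphi^\pi_{\eta,\eta'}\varphi_i\rangle \qquad (\eta,\eta' \in H).
\]
I would then let $i$ increase: a bounded approximate identity of $C_0(\G)$ converges strictly, hence weak$^\ast$, to $1$ in $L_\infty(\G)$, and after absorbing the fixed factor $\varphi^\pi_{\eta,\eta'}$ into $\omega$ via the $L_\infty(\G)$-module structure on $L_1(\G)$ (set $\omega_a(y)=\omega(ay)$ with $a=\varphi^\pi_{\eta,\eta'}$, so the right-hand side is $\langle\omega_a,\varphi_i\rangle \to \omega_a(1)=\langle\omega,\varphi^\pi_{\eta,\eta'}\rangle = \langle\pi(\omega)\eta\,|\,\eta'\rangle$). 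Passing to the limit in $|\langle (\pi\otop\sigma_i)(\omega)(\eta\otimes\xi_i)|\eta'\otimes\xi_i\rangle| \le \|\omega\|_{C_0(\G)}\|\eta\|\|\eta'\|$ and taking the supremum over unit vectors yields $\|\pi(\omega)\| \le \|\omega\|_{C_0(\G)}$; with $\pi=\pi_u$ this gives $\|\omega\|_\infty \le \|\omega\|_{C_0(\G)}$, so the quotient map is isometric.

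For (\ref{C*equality}) $\Rightarrow$ (\ref{HAP}) I would argue conceptually. By Proposition \ref{equiv2}(\ref{faithfulDrep}), $C^*_{C_0(\G)}(\G)$ admits a faithful $\ast$-representation $\pi_{C_0}$ whose restriction to $L_1(\G)$ is a $C_0(\G)$-representation, which by the remark preceding the theorem is exactly a \emph{mixing} representation in the sense of \cite{DaFiSkWh}. Using the isomorphism $C^*_{C_0(\G)}(\G) \cong C_u(\hat\G)$, this $\pi_{C_0}$ is faithful on $C_u(\hat\G)$, so $\|\pi_{C_0}(\omega)\| = \|\omega\|_\infty \ge |\omega(1)|$ for all $\omega$. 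Hence the trivial representation (the counit $\hat\epsilon$ of $C_u(\hat\G)$) is weakly contained in $\pi_{C_0}$, which is equivalent to $\pi_{C_0}$ having almost invariant vectors. Thus $\pi_{C_0}$ is a mixing representation with almost invariant vectors, and by the equivalent characterizations of the Haagerup property in \cite{DaFiSkWh} this is precisely condition (\ref{HAP}).

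The step I expect to be the main obstacle is the limit interchange in (\ref{HAP}) $\Rightarrow$ (\ref{C*equality}): one must verify that the approximate identity $(\varphi_i)$ converges to $1$ in the weak$^\ast$ topology of $L_\infty(\G)$ and that this convergence persists after multiplication by a fixed coefficient function, so that the one-dimensional family of corner vectors $\eta \otimes \xi_i$ actually recovers the full operator norm of $\pi(\omega)$ in the limit. Once this convergence is secured, both implications rest on the ideal property of $C_0(\G)$ (Lemma \ref{lemproduct}) and on the standard equivalence, recorded in \cite{DaFiSkWh}, between the existence of a mixing representation with almost invariant vectors and the Haagerup property.
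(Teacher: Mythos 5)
Your proof is correct and follows essentially the same route as the paper: for (\ref{HAP})$\Rightarrow$(\ref{C*equality}) the paper likewise exploits the ideal property of $C_0(\G)$ by multiplying positive definite functions against the approximate identity $(\varphi_i)$ --- it works with the GNS representations of the products $\varphi_i\varphi$ and the norm-one positive definite characterization of the universal norm, which is the diagonal-coefficient form of your computation with $\pi \otop \sigma_i$, and it relies on exactly the weak$^*$ convergence $\varphi_i \to 1$ that you flag as the key point. For (\ref{C*equality})$\Rightarrow$(\ref{HAP}) the paper reaches the same conclusion by unpacking the argument you cite wholesale from \cite{DaFiSkWh}: it uses the weak containment of the counit in the class of $C_0(\G)$-representations to extract a net of norm-one positive definite functions in $C_0(\G)$ converging weakly to the identity, and then upgrades this to a genuine approximate identity by a convexity/Hahn--Banach argument.
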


\begin{proof}
\eqref{HAP} $\implies$ \eqref{C*equality}.  Let $(\varphi_i)_{i \in I}$ be an approximate identity for $C_0(\G)$ consisting of norm one positive definite functions, let $\varphi \in B(\G)$ be another norm one positive definite function and consider the net of norm one positive definite functions $(\varphi_i\varphi)_{i \in I} \subseteq C_0(\G)$.  Let $(\pi_i, H_i, \xi_i)$ be  GNS constructions for $\varphi_i\varphi$.  Then $\pi_i$ is a $C_0(\G)$-representation \cite[Lemma 3.3]{DaFiSkWh}.  Since we also have $\langle \omega^\sharp\star \omega, \varphi\rangle =  \lim_{i \in I} \langle \omega^\sharp\star \omega, \varphi_i\varphi\rangle$ for each $\omega \in L_1^\sharp(\G)$, it follows that 
\[
|\langle \omega^\sharp\star \omega, \varphi\rangle| =  \lim_{i \in I}| \langle \omega^\sharp\star \omega, \varphi_i\varphi\rangle| = \lim_{i \in I} \|\pi_i(\omega)\xi_i\|^2 \le \|\omega\|_{C_0(\G)}^2 \qquad (\omega \in L_1^\sharp(\G)).
\]  Taking the supremum over all norm one positive definite  functions $\varphi \in B(\G)$, we obtain 
$\|\omega\|_{C_u(\hat \G)}^2 \le \|\omega\|_{C_0(\G)}^2$ for each $\omega \in L_1^\sharp(\G)$, and therefore the quotient map $C_u(\hat \G) \to C^*_{C_0(\G)}(\G)$ is isometric.

\eqref{C*equality} $\implies$ \eqref{HAP}.  Denote by $\mc R$ the collection of all $C_0(\G)$-representations of $\G$ and let $\hat \epsilon:C_u(\hat \G) \to \C$ be the dual co-unit.  I.e,   the $\ast$-character given by $\hat \epsilon(\pi_u(\omega)) = \langle \omega, 1 \rangle$ for each $\omega \in L_1(\G)$.  Since $C_u(\hat \G) \cong C^*_{C_0(\G)}(\G)$, the representation $\hat \epsilon$ is weakly contained in $\mc R$, which by the remark following \cite[Theorem 2.1]{DaFiSkWh} means that there exists a net of cyclic $C_0(\G)$-representations $(\pi_i, H_i, \xi_i)_{i \in I} \subset \mc R$ such that $\hat \epsilon = \text{weak}^\ast - \lim_{i \in I} \omega_{\xi_i,\xi_i} \circ \hat \pi_i$.  For each $i \in I$, let $\varphi_i = \varphi^\pi_{\xi_i,\xi_i}$.  Since $\|\xi_i\|^2 = \langle \hat \pi_i(1_{C_u(\hat \G)}) \xi_i|\xi_i\rangle \to \hat \epsilon(1_{C_u(\hat \G)}) = 1$, we may assume without loss of generality that $\|\xi_i\| = 1$ for each $i$, and therefore $(\varphi_i)_{i \in I}$ is a net of norm one positive definite functions in $C_0(\G)$.  We now follow the argument in the proof of $(i) \implies (iii)$ in \cite[Theorem 5.5]{DaFiSkWh} to see that for each $x \in C_0(\G)$, $\varphi_i x \to x$ and $x \varphi_i \to x$ weakly in $C_0(\G)$.  By passing to convex combinations of the net $(\varphi_i)_{i \in I}$, a Hahn-Banach argument yields a new net of norm one positive definite functions $(\psi_j)_{j \in J} \subset C_0(\G)$ that is an approximate identity for $C_0(\G)$.
\end{proof}


\section{Unimodular discrete quantum groups and the ideals $L_p(\G)$} \label{section:unimodular}
 
Let us recall that a discrete quantum group $\G$ is    \textit{unimodular} (or a discrete \emph{Kac algebra}) if $Q_\alpha = 1$ for all 
$\alpha \in \text{Irr}(\hat \G)$. 
In this case, the antipode $S$ is an isometric anti-automorphism on 
$L_\infty(\G)$ and thus $\sharp$ defines a completely isometric involution on $L_1(\G) = L_1^\sharp (\G)$.  
We have $m_\alpha = d_\alpha$ and 
$h=  h_L =  h_R$  is a semifinite trace on $L_\infty(\G) $, which is given by 
\[
h =\bigoplus_{\alpha \in \Irr(\hat \G)}  d_\alpha \Tr_{\alpha}.
\]
Unimodularity of $\G$ is also equivalent to the traciality of the Haar state $\hat h$ on the dual quantum group $\hat \G$.   

For  $1 \le p < \infty$, the induced non-commutative $L_p$-space $L_p(\G) = L_p(L_\infty(\G), h)$ is just the $\ell_p$-direct sum of Shatten $p$-classes
\[
L_p(\G) = \ell_p - \bigoplus_{\alpha \in \Irr(\hat \G)}  L_p(\mc B(H_\alpha), d_\alpha \Tr_{\alpha}).
\]
Here we identify $L_1(\G) $ with ${\frak M}_h$ and $L_2(\G) $ with ${\frak N}_h$ when we regard them as subspaces of 
$L_\infty(\G)$.
The norm $\|\cdot\|_p$ on $L_p(\G)$  is given by 
\[
\|x\|_{p} =  h(|x|^p)^{1/p}   =    ( \sum_{\alpha \in \Irr(\hat \G)} d_\alpha \Tr_{\alpha}(|p_\alpha x |^p)   )^{\frac 1p}
    \qquad (x \in L_p(\G)),
\]     
and $L_p(\G)$ is an $L_\infty(\G)$-bimodule satisfying
\[
\|x y x\|_p \le \|x\|_\infty \|y\|_p\|z\| _\infty
\quad ( x, z \in L_\infty(\G), y\in L_p(\G)).
\]
In particular, the spaces $L_p(\G)$ give rise to examples of two-sided ideals in $L_\infty(\G)$.  

The following lemma is easy.

\begin{lem} \label{lem:Lpideal}
If $|\Irr(\hat \G)| = \infty$ and $1 \le p_1 < p_2 \le \infty$, then $L_{p_1}(\G)$ is a proper subspace of $L_{p_2}(\G)$ with contractive inclusions
\[
L_1(\G) \hookrightarrow L_{p_1}(\G)\hookrightarrow L_{p_2}(\G) \hookrightarrow  C_0(\G) \subseteq L_\infty(\G).
\]
\end{lem}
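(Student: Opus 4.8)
The plan is to reduce the entire statement to an elementary weighted $\ell_p$ estimate, exploiting the explicit block decomposition of the norm recorded above. For $x \in L_p(\G)$ I write $x_\alpha = p_\alpha x \in \mc B(H_\alpha)$ and let $s^{(\alpha)}_1 \ge \cdots \ge s^{(\alpha)}_{d_\alpha} \ge 0$ denote its singular values. Since $h = \bigoplus_\alpha d_\alpha \Tr_\alpha$ and $\Tr_\alpha$ is the usual matrix trace, the formula for $\|\cdot\|_p$ becomes
\[
\|x\|_p^p = \sum_{\alpha \in \Irr(\hat \G)} d_\alpha \, \Tr_\alpha(|x_\alpha|^p) = \sum_{\alpha \in \Irr(\hat \G)} d_\alpha \sum_{j=1}^{d_\alpha} \big(s^{(\alpha)}_j\big)^p,
\]
so that $\|\cdot\|_p^p$ is a weighted $\ell_p$-sum of the singular values of the blocks of $x$, each value $s^{(\alpha)}_j$ carrying the weight $d_\alpha$. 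The one structural fact I would isolate at the outset is that every weight satisfies $d_\alpha \ge 1$, as the $d_\alpha$ are dimensions of representations; equivalently, every minimal projection of $L_\infty(\G)$ has $h$-trace at least $1$. This is precisely what forces the inclusions to run in the ``$\ell_p$-direction'' $L_{p_1} \subseteq L_{p_2}$ rather than in the opposite ``probability-measure direction'', and it is where unimodularity ($m_\alpha = d_\alpha$) and discreteness enter. Everything else is routine.

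For the contractive inclusion $L_{p_1}(\G) \hookrightarrow L_{p_2}(\G)$ with $p_1 < p_2 < \infty$, I would normalize $\|x\|_{p_1} = 1$. Then each term obeys $d_\alpha (s^{(\alpha)}_j)^{p_1} \le 1$, and since $d_\alpha \ge 1$ this yields $s^{(\alpha)}_j \le 1$ for all $\alpha, j$. Because $0 \le t \le 1$ and $p_2 \ge p_1$ give $t^{p_2} \le t^{p_1}$, summing against the same weights produces
\[
\|x\|_{p_2}^{p_2} = \sum_{\alpha} d_\alpha \sum_{j} \big(s^{(\alpha)}_j\big)^{p_2} \le \sum_{\alpha} d_\alpha \sum_{j} \big(s^{(\alpha)}_j\big)^{p_1} = 1 = \|x\|_{p_1}^{p_1},
\]
whence $\|x\|_{p_2} \le \|x\|_{p_1}$. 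The endpoint $p_2 = \infty$ is handled by the same idea: each summand of $\|x\|_{p_1}^{p_1}$ dominates $(s^{(\alpha)}_j)^{p_1}$ (again using $d_\alpha \ge 1$), so $\|x\|_\infty = \sup_{\alpha,j} s^{(\alpha)}_j \le \|x\|_{p_1}$. Specializing to the exponent pairs $(1, p_1)$ and $(p_1, p_2)$ gives the full chain $L_1(\G) \hookrightarrow L_{p_1}(\G) \hookrightarrow L_{p_2}(\G)$. Finally, for $L_{p_2}(\G) \hookrightarrow C_0(\G)$ when $p_2 < \infty$, I would observe that convergence of $\sum_\alpha d_\alpha \sum_j (s^{(\alpha)}_j)^{p_2}$ forces its terms to zero; as $d_\alpha \ge 1$ this gives $\|x_\alpha\|_\infty^{p_2} = \sup_j (s^{(\alpha)}_j)^{p_2} \to 0$ along $\Irr(\hat \G)$, i.e.\ $x \in c_0\text{-}\bigoplus_\alpha \mc B(H_\alpha) = C_0(\G)$.

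For properness I would use $|\Irr(\hat \G)| = \infty$ to select pairwise distinct $\alpha_n \in \Irr(\hat \G)$ and build an explicit element supported on these blocks. Taking $x_{\alpha_n} = c_n e^{\alpha_n}_{11}$ (a scalar multiple of a matrix unit, hence with a single singular value $c_n$) and setting $c_n = (n\, d_{\alpha_n})^{-1/p_1}$, I obtain
\[
\|x\|_{p_1}^{p_1} = \sum_n d_{\alpha_n} c_n^{p_1} = \sum_n \frac{1}{n} = \infty, \qquad \|x\|_{p_2}^{p_2} = \sum_n d_{\alpha_n} c_n^{p_2} = \sum_n \frac{d_{\alpha_n}^{\,1 - p_2/p_1}}{n^{p_2/p_1}} \le \sum_n \frac{1}{n^{p_2/p_1}} < \infty,
\]
where the inequality uses $d_{\alpha_n} \ge 1$ together with $1 - p_2/p_1 < 0$, and convergence holds since $p_2/p_1 > 1$; thus $x \in L_{p_2}(\G) \setminus L_{p_1}(\G)$. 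For the endpoint $p_2 = \infty$ the same construction with $c_n = 1$ gives $\|x\|_\infty = 1$ while $\|x\|_{p_1}^{p_1} = \sum_n d_{\alpha_n} = \infty$. I do not expect a genuine obstacle anywhere in the argument; the only point requiring care is choosing the coefficients $c_n$ so that the construction is insensitive to the a priori unknown growth of the dimensions $d_{\alpha_n}$, which is exactly what the normalization $c_n = (n\, d_{\alpha_n})^{-1/p_1}$ achieves.
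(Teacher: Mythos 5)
Your proposal is correct, and there is nothing to compare it against: the paper simply asserts the lemma with the remark ``The following lemma is easy'' and gives no proof. Your argument is exactly the standard one the authors have in mind --- the inclusions go in the $\ell_p$-direction because every minimal projection has Haar trace $d_\alpha \ge 1$, and the normalization $c_n = (n\,d_{\alpha_n})^{-1/p_1}$ cleanly handles properness independently of the growth of the dimensions.
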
    
When dealing with elements $x \in {\frak M}_h$ we will at times write $\|\omega_x\|_p$ instead of $\|x\|_p$ when we wish to emphasize the identification ${\frak M}_h \cong L_1(\G)$. 

\begin{rem} 
We can also obtain the non-commutative $L_p$-space $L_p(\G)$ as
a natural  complex interpolation space.  See \cite[Chapter 7]{Pi}.
For $1 \le p_1 < p < p_2 \le \infty$, there exists a positive number  $\theta \in (0, 1)$ such that 
${\frac  1p} = {\frac  \theta {p_1}} + {\frac {1-\theta} {p_2}}$.
In this case, we have 
\[
L_p(\G) = (L_{p_2}(\G), L_{p_1}(\G))_{\theta}.
\]
If $T$ is a  linear map, which is defined and bounded on both $L_{p_1}(\G)$ and $L_{p_2}(\G)$, then $T$ is bounded on $L_{p}(\G)$
with 
\[
\|T\|_{L_{p}(\G)} \le \|T\|_{L_{p_1}(\G)}^\theta  \|T\|_{L_{p_2}(\G)} ^{1- \theta}.
\]
This fact will be used in the next section.
\end{rem}

\subsection{The left and right actions of $L_1(\G)$ on $L_p(\G)$}

For a unimodular discrete quantum group $\G$ and $p \in [1,\infty]$,    $L_1(\G)$ acts on $L_p(\G)$ in a natural way that extends the left  action $\alpha_1$ of $L_1(\G)$ on itself by convolution: 
\[
\alpha_1(\omega)(\omega' )= \omega \star \omega' \qquad (\omega,\omega' \in L_1(\G)).
\]
Let us recall how this is done.

Motivated by the group case, we can similarly define a contractive left action $\alpha_\infty$  of $L_1(\G)$ on  $L_\infty(\G)$ given by 
\[
\alpha_\infty(\omega)(x) = (\omega \circ S \otimes \iota) \Delta(x) = x \star S_*(\omega) \qquad (\omega \in L_1(\G), \ x \in L_\infty(\G)). 
\]

The following lemma was observed by Kalantar \cite{Ka13}, which shows that the above actions on $L_1(\G)$ and $L_\infty(\G)$ are compatible under the identification $L_1(\G) = {\frak M}_{h} \subseteq L_\infty(\G)$. 

\begin{lem} \label{lem:compatibility}
Let $x \in {\frak M}_h$ and $\omega\in L_1(\G)$.  
Then $\alpha_\infty(\omega)(x) \in \mf M_h$ and \[\omega_{\alpha_\infty(\omega)(x)}=\omega\star \omega_x = \alpha_1(\omega)(\omega_x) .\]
In particular, $\alpha_\infty(\omega_y)(x) = y\star x \in C_c(\G)$ for each $x,y \in C_c(\G)$.
\end{lem}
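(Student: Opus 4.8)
The plan is to identify $\alpha_\infty(\omega)(x)$ with the unique element of $\mf M_h$ representing the convolution $\omega\star\omega_x\in L_1(\G)$; this delivers membership in $\mf M_h$ and the stated module identity at one stroke. Two features of the unimodular case make this clean: first, $S$ is a bounded (isometric) anti-automorphism of $L_\infty(\G)$, so $S_*(\omega)=\omega\circ S$ is a genuine element of $L_1(\G)$ and no unbounded-operator domain questions arise; second, $h$ is a trace. Since $L_1(\G)$ is a Banach algebra, $\omega\star\omega_x$ is a well-defined element of $L_1(\G)$, so under the identification $L_1(\G)\cong\mf M_h$ there is a unique $z\in\mf M_h$ with $\omega_z=\omega\star\omega_x$. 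It then suffices to prove the equality $z=\alpha_\infty(\omega)(x)$ in $L_\infty(\G)$, for then $\alpha_\infty(\omega)(x)=z\in\mf M_h$ and $\omega_{\alpha_\infty(\omega)(x)}=\omega_z=\omega\star\omega_x=\alpha_1(\omega)(\omega_x)$.

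To prove $z=\alpha_\infty(\omega)(x)$, I test both elements against the norm-dense subalgebra $\mc A(\G)=\{\omega_{x'}:x'\in C_c(\G)\}$ of $L_1(\G)$; since $L_\infty(\G)=L_1(\G)^\ast$, agreement on this dense set forces equality. Using traciality of $h$ and the definition of the convolution, one side unwinds to
\[
\langle\omega_{x'},z\rangle=\omega_z(x')=(\omega\star\omega_x)(x')=(\omega\otimes h)\big(\Delta(x')(1\otimes x)\big),
\]
while writing $\alpha_\infty(\omega)(x)=(\omega\circ S\otimes\iota)\Delta(x)$, reordering under the trace, and passing $h$ through the second leg gives
\[
\langle\omega_{x'},\alpha_\infty(\omega)(x)\rangle=(\omega\circ S\otimes h)\big((1\otimes x')\Delta(x)\big).
\]
Thus the whole lemma collapses to the single identity $(\omega\circ S\otimes h)\big((1\otimes x')\Delta(x)\big)=(\omega\otimes h)\big(\Delta(x')(1\otimes x)\big)$, which is precisely the result of applying $\omega$ to the strong left invariance relation $S\big((\iota\otimes h)((1\otimes x')\Delta(x))\big)=(\iota\otimes h)(\Delta(x')(1\otimes x))$ for the Haar weight.

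The only real work is to justify this strong-invariance identity with the correct integrability. For this I would invoke the inclusion $\mf M_h=L_1(\G)\subseteq L_2(\G)=\mf N_h$, so that $x\in\mf N_h$ and $x'\in C_c(\G)\subseteq\mf N_h$; this is exactly the regime in which the strong left invariance of the left Haar weight holds (see \cite{KuVa1,KuVa2}), and boundedness of $S$ makes both the composition $\omega\circ S$ and the Fubini-type slicing $h\circ(\phi\otimes\iota)=(\phi\otimes h)$ unproblematic. I expect verifying these integrability and slice-map manipulations to be the sole technical point; the rest is formal. Finally, the ``in particular'' statement is immediate: taking $\omega=\omega_y$ with $y\in C_c(\G)$, the main identity gives $\omega_{\alpha_\infty(\omega_y)(x)}=\omega_y\star\omega_x=\omega_{y\star x}$, so injectivity of $\mf M_h\cong L_1(\G)$ yields $\alpha_\infty(\omega_y)(x)=y\star x$, which lies in $C_c(\G)$ since the convolution product is closed there.
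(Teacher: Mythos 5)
Your proof is correct and follows essentially the same route as the paper's: both arguments reduce the lemma to the strong left invariance relation $(\iota\otimes h)\bigl(\Delta(x')(1\otimes x)\bigr)=S\bigl((\iota\otimes h)((1\otimes x')\Delta(x))\bigr)$ from \cite[Proposition 5.20]{KuVa1}, applied after slicing by $\omega$, and then identify the two sides by testing against a dense set (the paper pairs against $y\in\mf N_h$, you pair against $\omega_{x'}$ with $x'\in C_c(\G)$ --- an immaterial difference). The integrability bookkeeping via $\mf M_h\subseteq\mf N_h$ and the deduction of the ``in particular'' clause from injectivity of $x\mapsto\omega_x$ also match the paper's treatment.
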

\begin{proof}
Let us first recall  that for the (left=right) Haar weight $h$, we have
from \cite[Proposition 5.20]{KuVa1} that
\[
(\iota \otimes h) \Delta (y) (1 \otimes x) = S( (\iota \otimes h) (1 \otimes y) \Delta(x)) \qquad (x,y \in \mf N_{h}).
\]
Since $\alpha_\infty(\omega)(x) =  (\omega \circ S \otimes \iota) \Delta(x)$,  for any $y\in \mf N_{h} \subset L_\infty(\G)$ we have 
\begin{eqnarray*}
\langle \omega_{\alpha_\infty(\omega)(x)}, y\rangle &=&
h (y \, \alpha_\infty(\omega)(x)) 
= \langle \omega \circ S,  (\iota \otimes h) (1 \otimes y)\Delta (x)  \rangle\\
&=& \langle \omega \otimes h,   \Delta (y) (1 \otimes x)\rangle 
= \langle \omega \otimes\omega_x,   \Delta(y) \rangle = \langle \omega \star\omega_x, y \rangle\\
&=& \langle \alpha_1(\omega)(\omega_x), y \rangle .
\end{eqnarray*}
Since $\mf N_h$ is $\sigma$-weakly dense in $L_\infty(\G)$, this proves the second assertion, i.e. 
 $\omega_{\alpha_\infty(\omega)(x)} = \alpha_1(\omega)(\omega_x)$ for each $x \in \mf M_h$ and $\omega \in L_1(\G)$.
 The first and third assertions follow immediately from this calculation and the definition of the convolution product on $C_c(\G)$. 
\end{proof}

Therefore, for each $\omega \in L_1(\G)$, the induced action map 
$
\alpha_\infty(\omega): x \in L_\infty(\G) \mapsto x \star S_*(\omega) \in L_\infty(\G)
$
 and the induced action map $\alpha_1(\omega): \omega_x \in L_1(\G) \mapsto \omega \star \omega_x  \in L_1(\G)$ are compatible maps with norms $\le \|\omega\|_1$.
So for $1 < p < \infty$,
we can take the complex interpolation to obtain a map $\alpha_p(\omega) \in \mc B(L_p(\G))$ with norm $\le \|\omega\|_1$.  
Since $\G$ is unimodular with  tracial Haar weight $h = h_L = h_R$ on $\G$, 
we can similarly define the right $L_1(\G)$ action $\beta_p$ on $L_p(\G)$ 
by  complex interpolation.  More precisely, given any $\omega\in L_1(\G)$,
we can obtain  a map $\beta_p(\omega) \in \mc B(L_p(\G))$ with norm $\le \|\omega\|_1$ by  interpolating between the right actions $\beta_\infty:L_1(\G) \to \mc B(L_\infty(\G))$; $\beta_\infty(\omega)(x) = (\iota \otimes \omega \circ  S)\Delta(x) = S_*(\omega) \star x$ 
and $\beta_1:L_1(\G) \to \mc B(L_1(\G))$; $\beta_1(\omega)(\omega_x )= \omega_x \star \omega$ for all $x \in 
{\frak M}_h$.

We call the maps $\alpha_p:L_1(\G) \to \mc B(L_p(\G))$ and $\beta_p:L_1(\G) \to \mc B(L_p(\G))$ the \textit{canonical left and right actions} of $L_1(\G)$ on $L_p(\G)$, respectively.  Observe that $\alpha_p$ and $ \beta_p$ are indeed left (resp. right) actions in the usual sense.

\subsection{$L_p$-representations and $L_p$-C$^\ast$-algebras}

In this section we prove a few basic properties of $L_p(\G)$-(or simply $L_p$-)representations of unimodular discrete quantum groups and their associated $L_p$-C$^\ast$-algebras $C^*_{L_p(\G)}(\G)$.  For notational simplicity, we will write $C^*_p(\G)$ instead of $C^*_{L_p(\G)}(\G)$ for the remainder of the paper.       

In \cite{BrGu12}, Brown and Guentner showed that for a discrete group $G$ and $1 \le p \le 2$, the $\ell_p$-C$^\ast$-algebra of $G$ is canonically isomorphic to the reduced C$^\ast$-algebra of $G$.  The generalization of this result to unimodular discrete quantum groups is an immediate consequence of Proposition \ref{equiv2} \eqref{unimodular}.  

\begin{prop}
Let $\G$ be a unimodular discrete quantum group.  For each $1\le p\le2$, the identity map on $L_1(\G)$ extends to a $\ast$-isomorphism $C^*_{p}(\G) \cong C(\hat \G)$.
\end{prop}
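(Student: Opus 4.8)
The plan is to reduce everything to Proposition \ref{equiv2}\eqref{unimodular}: that result already asserts a $\ast$-isomorphism $C^*_D(\G) \cong C(\hat\G)$ for any linear subspace $D$ with $C_c(\G) \subseteq D \subseteq \mathfrak{N}_{h_L}^*$, so it suffices to check that $D = L_p(\G)$ satisfies these two containments for $1 \le p \le 2$. Thus the entire task is to verify the chain
\[
C_c(\G) \subseteq L_p(\G) \subseteq \mathfrak{N}_{h_L}^*,
\]
where, by unimodularity, $h := h_L = h_R$ is the tracial Haar weight.

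First I would pin down the space $\mathfrak{N}_{h_L}^*$ in the unimodular case. Since $h$ is a trace we have $h(xx^*) = h(x^*x)$ for all $x$, and therefore $\mathfrak{N}_h^* = \{x : x^* \in \mathfrak{N}_h\}$ coincides with $\mathfrak{N}_h = L_2(\G)$ under the standing identification of these spaces with subspaces of $L_\infty(\G)$. With this in hand, the right-hand inclusion $L_p(\G) \subseteq L_2(\G) = \mathfrak{N}_{h_L}^*$ for $1 \le p \le 2$ is precisely the contractive embedding recorded in Lemma \ref{lem:Lpideal} (take $p_1 = p$, $p_2 = 2$). The left-hand inclusion $C_c(\G) \subseteq L_p(\G)$ is elementary: an element of $C_c(\G) = \bigoplus_{\alpha \in \Irr(\hat\G)} \mathcal{B}(H_\alpha)$ (algebraic direct sum) is supported on finitely many blocks, each of which is finite-dimensional, so it automatically has finite $p$-norm.

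Having verified $C_c(\G) \subseteq L_p(\G) \subseteq \mathfrak{N}_{h_L}^*$, I would simply invoke Proposition \ref{equiv2}\eqref{unimodular} with $D = L_p(\G)$ to conclude that the identity on $L_1(\G)$ extends to the desired $\ast$-isomorphism $C^*_p(\G) = C^*_{L_p(\G)}(\G) \cong C(\hat\G)$. I note that the standing Assumption \ref{assumption} is automatic here: since $C_c(\G) \subseteq L_p(\G)$, Proposition \ref{prop:regular_is_C00} shows that the coefficient functions of $(\lambda, L_2(\G))$ lie in $C_c(\G) \subseteq L_p(\G)$, so $\lambda$ is an $L_p(\G)$-representation. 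There is no genuine obstacle in this argument, as all the analytic content is absorbed into Proposition \ref{equiv2}\eqref{unimodular} (whose proof leverages the fact, from Proposition \ref{prop:C_00inA(G)}, that elements of $\mathfrak{N}_{h_L}^*$ are coefficient functions of $\lambda$, thereby trapping the $L_p$-norm between the universal and reduced norms). The only step demanding a moment's care is the identification $\mathfrak{N}_{h_L}^* = L_2(\G)$, which rests squarely on unimodularity; for a non-unimodular discrete quantum group these two spaces need not coincide, and the present proposition would fail as stated.
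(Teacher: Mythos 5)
Your proposal is correct and follows exactly the paper's route: the paper derives this proposition as an immediate consequence of Proposition \ref{equiv2}\eqref{unimodular}, which is precisely what you do after verifying the containments $C_c(\G) \subseteq L_p(\G) \subseteq \mathfrak{N}_{h_L}^* = \mathfrak{N}_h = L_2(\G)$ (the last identification using traciality of $h$ and Lemma \ref{lem:Lpideal}). Your write-up simply makes explicit the details the paper leaves to the reader.
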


On the other hand,  Brown and Guentner \cite[Proposition 2.12]{BrGu12} showed that injectivity of the natural quotient map $C^*(G) \to C^*_{\ell_p}(G)$ for \textit{any} $1 \le p < \infty$ characterizes the amenability of $G$.  In general, we have the following natural quantum group analogue of this result.

\begin{thm} \label{thm:amenability}
Let $\G$ be a unimodular discrete quantum group. Then the following are equivalent.
\begin{enumerate}
\item \label {amenable1} $\G$ is amenable.
\item \label {Lp} the canonical quotient map  $C_{\infty}^*(\G) \to  C^*_{p}(\G)$ is an isomorphism for some (and thus for all)  $1\le p  <  \infty$.
\end{enumerate}
\end{thm}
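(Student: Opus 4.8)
The plan is to prove both implications by passing through the \emph{co-amenability} of the compact dual $\hat\G$, using the standard duality that a discrete quantum group $\G$ is amenable if and only if $\hat\G$ is co-amenable, i.e. if and only if the counit $\hat\epsilon$ (the $\ast$-character on $C_u(\hat\G)=C_\infty^*(\G)$ determined by $\hat\epsilon(\pi_u(\omega))=\langle\omega,1\rangle$) is bounded for the reduced norm of $C(\hat\G)$. First I record the setup that makes all the relevant maps available: by Lemma \ref{lem:Lpideal} we have $C_c(\G)\subseteq L_p(\G)$, so the regular representation is an $L_p$-representation for every finite $p$ (its coefficients $\varphi^\lambda_{\Lambda(x),\Lambda(y)}$ with $x,y\in C_c(\G)$ already lie in $C_c(\G)$), and hence Proposition \ref{equiv2} supplies canonical quotient maps $C_\infty^*(\G)\to C^*_p(\G)\to C(\hat\G)$ whose composite is the reduced quotient $\hat\pi_\lambda$.

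For $(1)\Rightarrow(2)$: if $\G$ is amenable then $\hat\G$ is co-amenable, so $\hat\pi_\lambda\colon C_u(\hat\G)\to C(\hat\G)$ is an isomorphism. Since $\hat\pi_\lambda$ factors as $C_\infty^*(\G)\to C^*_p(\G)\to C(\hat\G)$, the first quotient map is injective (its composite with the second is) and surjective, hence an isomorphism; this works for every $p\in[1,\infty)$, giving the ``for all'' clause.

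For $(2)\Rightarrow(1)$, the core of the argument, assume $C_\infty^*(\G)\to C^*_p(\G)$ is an isomorphism for some fixed finite $p$. Then $\hat\epsilon$, a state on $C_\infty^*(\G)$, descends to a state on $C^*_p(\G)$, so the trivial representation is weakly contained in the $L_p$-representations. Since a direct sum of $L_p$-representations is an $L_p$-representation (Lemma \ref{lemproduct}), I can run the extraction from the proof of Theorem \ref{thm:HAP} to produce a net of norm-one positive definite functions $\varphi_i=\varphi^{\sigma_i}_{\eta_i,\eta_i}$ with each $\sigma_i$ an $L_p$-representation and $\langle\omega,\varphi_i\rangle\to\langle\omega,1\rangle$ for all $\omega\in L_1(\G)$; after an arbitrarily small perturbation I may take each $\eta_i$ in the canonical dense subspace, so that $\varphi_i\in L_p(\G)$. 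Testing against functionals supported on a single block forces $p_\alpha\varphi_i\to p_\alpha$ in the finite-dimensional algebra $\mc B(H_\alpha)$ for every $\alpha\in\Irr(\hat\G)$. Now comes the product trick that upgrades $L_p$-decay to $L_2$-membership while preserving positive definiteness: fix the least $k\ge 0$ with $p/2^k\le 2$ and set $\Phi_i:=\varphi_i^{2^k}$, the $2^k$-th power computed in $L_\infty(\G)=\prod_\alpha\mc B(H_\alpha)$. Iterating Lemma \ref{lem:products_coefficients}, $\Phi_i$ is the coefficient of the $2^k$-fold tensor product of $\sigma_i$ at the unit vector $\eta_i^{\otimes 2^k}$, hence is again a norm-one positive definite function; and by the non-commutative Hölder inequality $\|\varphi_i^{2^k}\|_{p/2^k}\le\|\varphi_i\|_p^{2^k}$, so $\Phi_i\in L_{p/2^k}(\G)\subseteq L_2(\G)=\mathfrak N_h$ by Lemma \ref{lem:Lpideal}. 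Because $\G$ is unimodular, $\mathfrak N_h=\mathfrak N_h^*$, so Proposition \ref{prop:C_00inA(G)} yields unit vectors $\zeta_i\in L_2(\G)$ with $\Phi_i=\varphi^\lambda_{\zeta_i,\zeta_i}$. Finally $p_\alpha\Phi_i=(p_\alpha\varphi_i)^{2^k}\to p_\alpha$ for each $\alpha$, and since $\|\Phi_i\|_\infty\le 1$ this upgrades to $\langle\omega,\Phi_i\rangle\to\langle\omega,1\rangle$ for all $\omega\in L_1(\G)$. Thus the states $\omega_{\zeta_i,\zeta_i}\circ\lambda$ on $C(\hat\G)$ converge weak$^\ast$ to $\hat\epsilon$ on the dense subalgebra $\lambda(L_1(\G))$, and by norm-boundedness their weak$^\ast$ limit is a state of $C(\hat\G)$ restricting to $\hat\epsilon$. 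Hence $\hat\epsilon$ is reduced-norm bounded, $\hat\G$ is co-amenable, and $\G$ is amenable. Combined with $(1)\Rightarrow(2)$ this also delivers the ``some $\iff$ all'' equivalence.

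The main obstacle is precisely the passage from $L_p$- to $L_2$-coefficients in $(2)\Rightarrow(1)$: unlike the classical group case, positive definiteness alone does \emph{not} force an $L_p$ function into $L_2$ (this failure is exactly what distinguishes the exotic algebras for non-amenable $\G$), so one must genuinely exploit the multiplicativity of coefficient functions (Lemma \ref{lem:products_coefficients}) together with non-commutative Hölder, and check that the block-wise, hence weak$^\ast$, convergence survives the $2^k$-fold power. The accompanying bookkeeping---that the approximating $\varphi_i$ may be chosen norm-one positive definite with vectors in the canonical dense subspace so that $\varphi_i\in L_p(\G)$, and that the weak$^\ast$ limit genuinely recovers $\hat\epsilon$---is routine but should be verified with some care.
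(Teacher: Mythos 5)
Your proof is correct and takes essentially the same approach as the paper's: both directions reduce to co-amenability of $\hat \G$, and the key step in $(2)\Rightarrow(1)$ is identical --- extract a net of norm-one positive definite functions in $L_p(\G)$ converging $\sigma$-weakly to $1$, use multiplicativity of coefficient functions plus H\"older to raise them to a power lying in $L_2(\G)$, realize the powers as coefficients $\varphi^\lambda_{\zeta_i,\zeta_i}$ via Proposition \ref{prop:C_00inA(G)}, and pass blockwise convergence through the power to conclude that the counit is reduced-norm bounded. The only differences are cosmetic: the paper extracts the net via Glimm's lemma applied to a single faithful $L_k$-representation with integer $k \ge p$ and uses the $k$-fold power to land in $L_1(\G)$, and phrases the conclusion as $\ker \hat\pi_\lambda \subseteq \ker\hat\epsilon$, whereas you use a weak-containment extraction, a $2^k$-fold power landing in $L_{p/2^k}(\G) \subseteq L_2(\G)$, and boundedness of $\hat\epsilon$ on $C(\hat\G)$; these are equivalent devices.
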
 
\begin{proof} $(1) \implies (2)$. For any $1\le p < \infty$, we have the canonical quotient maps 
\[
C_{\infty}^*(\G)\to C_{p}^*(\G) \to C_{C_c(\G)}^*(\G).
\]
For discrete quantum groups, it is known from  \cite{Ru96}
 and \cite{To} that $\G$ is amenable if and only if $\hat \G$ 
is co-amenable, i.e.  the canonical quotient map 
\[
C_{\infty}^*(\G)= C_u(\hat \G) \to  C^*_{C_c(\G)}(\G) = C(\hat \G)
\]
 is an isomorphism.   Therefore the canonical quotient map  $C_{\infty}^*(\G) \to  C^*_{p}(\G)$ is an
isomorphism.

$(2) \implies (1)$. Let us assume that the canonical quotient map $C_{\infty}^*(\G) \to  C^*_{p}(\G)$ is an isomorphism for some $1 \le p < \infty$.
Then we can find a positive integer $k \ge p$ and obtain the  isomorphisms
\[
C_{\infty}^*(\G)  =  C_{k}^*(\G)= C_{p}^*(\G).
\]
Let $\pi:  L_1(\G) \to \mc B(H)$ be a faithful $L_k(\G)$-representation such that $\pi$ extends to a $\ast$-isomorphism
$C_{k}^*(\G)  = \overline{\pi(L_1(\G))}^{\|\cdot\|} \subseteq \mc B(H)$.  By taking an amplification of $\pi$ if necessary, we may assume that $C_{k}^*(\G) \cap \mc K(H) = \{0\}$.
Let $\hat \epsilon$ be the co-unit of $C_{\infty}^*(\G)= C_u(\hat \G) = C_{k}^*(\G)$ given by $\hat \epsilon(\pi_u(\omega)) = \omega (1)$ for all $\omega\in L_1(\G)$.
To prove (2) implies (1),  it suffices to show that $\ker \hat \pi_\lambda \subseteq \ker \hat \epsilon $
 (see \cite[Theorem 3.1]{BeTu}).

To see this, we apply Glimm's Lemma \cite[Lemma II.5.1]{Da96}, which says that 
for the state $\hat \epsilon:C_{k}^*(\G) \to \C$ 
there exists a net of unit  vectors $(\xi_i)_{i\in I} \subset H$ such that 
\[
\hat \epsilon(\hat x) = \lim_{i}  \langle \hat x \xi_i |\xi_i\rangle  \qquad (\hat x \in C_{k}^*(\G)= C_{\infty}^*(\G) ).
\]
In particular, for $\omega \in L_1(\G)$, we get  
\begin{align}\label{eqn:weak}
\langle \omega, 1 \rangle  = \hat \epsilon(\pi(\omega)) = \lim_{i} \langle \pi(\omega) \xi_i |\xi_i\rangle =
\lim_{i} \langle \omega, \varphi^\pi_{\xi_i, \xi_i} \rangle.
\end{align}
That is, $\varphi^\pi_{\xi_i, \xi_i} \to 1$ $\sigma$-weakly in $L_\infty(\G)$.
Since $(\pi, H)$ is an $L_k(\G)$-representation, we may assume that each $\xi_i$ is chosen from the canonical dense subspace $H_0 \subset H$ and therefore $(\varphi^\pi_{\xi_i, \xi_i})_i$ is a net of norm one positive definite functions  in $L_k(\G)$.
In this case, H\"older's inequality implies that the $k$-fold product  
\[
(\varphi^\pi_{\xi_i, \xi_i})^k 
= \varphi^{\pi\otop \cdots \otop \pi}_{\xi_i \otimes \cdots \otimes \xi_i, \xi_i \otimes \cdots \otimes \xi_i}
\]
is a norm one positive definite function   in $L_1(\G) \subseteq L_2(\G)$.  Applying Proposition \ref {prop:C_00inA(G)} to $(\varphi^\pi_{\xi_i, \xi_i})^k$, we obtain a net of  unit vectors $(\eta_i)_{i \in I}  \subset  L_2(\G)$ such that 
\[
\varphi^\lambda_{\eta_i, \eta_i} =  (\varphi^\pi_{\xi_i, \xi_i})^k \in A(\G) \qquad (i \in I).
\]
To conclude the proof, it suffices to show that  $\varphi^\lambda_{\eta_i, \eta_i} \to 1$ in the weak* topology of $B(\G)$.  Indeed, if this is the case, then for any $\hat x \in \ker \hat \pi_\lambda$, we have $\hat \epsilon (\hat x) = \lim_i \varphi^\lambda_{\eta_i,\eta_i}(\hat x) = \lim_{i} \langle \hat \pi_\lambda(\hat x) \eta_i|\eta_i \rangle  = 0$, giving $\ker \hat \pi_\lambda \subseteq \ker \hat \epsilon$.

To establish that $\varphi^\lambda_{\eta_i, \eta_i} \to 1$ in the weak* topology of $B(\G)$, note that $(\varphi^\lambda_{\eta_i, \eta_i})_{i \in I} \in B(\G) \cong C^*_{k}(\G)^*$ is a bounded net and that $\pi(\mc A (\G))$ is norm dense in $C^*_{k}(\G)$. Therefore by linearity and density it suffices to check that 
\[\langle \omega_x, 1 \rangle = \lim_i\langle \omega_x, \varphi^\lambda_{\eta_i, \eta_i} \rangle \qquad (x \in \mc B(H_\alpha), \ \alpha \in \Irr(\hat \G)).\]  
But since $\varphi^\pi_{\xi_i, \xi_i} \to 1$ $\sigma$-weakly in $L_\infty(\G)$, for each $\alpha \in \Irr(\hat \G)$ we have  $\lim_i p_\alpha\varphi^\pi_{\xi_i, \xi_i} = p_\alpha \in \mc B(H_\alpha)$ in norm (because $\mc B(H_\alpha)$ is finite dimensional) and hence also \[p_\alpha \varphi^\lambda_{\eta_i, \eta_i} = (p_\alpha\varphi^\pi_{\xi_i, \xi_i})^k \to (p_\alpha)^k = p_\alpha \quad \text{in norm}.\]  In particular,  for each $x \in \mc B(H_\alpha)$, we get \[\langle \omega_x, 1 \rangle = \langle \omega_x, p_\alpha \rangle =  \lim_i\langle \omega_x, \varphi^\lambda_{\eta_i, \eta_i}p_\alpha \rangle =  \lim_i\langle \omega_x, \varphi^\lambda_{\eta_i, \eta_i} \rangle.\]
\end{proof} 

We end this section with a technical result which, among other things,  gives a sufficient condition for a cyclic representation to be an $L_p$-representation.  The discrete group analogue of this result was proved by Okayasu \cite{Ok12}.  This result will be used in Section \ref{section:applications}.

\begin{lem} \label{lem:Kac_result}
Let $\G$ be a unimodular discrete quantum group and $\pi:L_1(\G) \to \mc B(H)$ a cyclic representation with cyclic vector $\xi \in H$.  If the positive definite function  $\varphi^\pi_{\xi,\xi} = (\iota \otimes \omega_{\xi,\xi})U_\pi$ belongs to $L_p(\G)$ for some $1 \le p < \infty$, then $\pi$ is an $L_p$-representation.  Moreover, if $q$ is the conjugate exponent of $p$, we have 
\[\|\pi(\omega)\| \le \liminf_{k \to \infty} \|(\omega^\sharp \star \omega)^{\star 2k}\|_q^{\frac{1}{4k}} \qquad (\omega \in L_1(\G))= L_1^\sharp(\G)).\]
\end{lem}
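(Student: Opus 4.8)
The plan is to prove the two assertions separately, relying on the left and right convolution actions $\alpha_p,\beta_p$ of $L_1(\G)$ on $L_p(\G)$ constructed in the previous subsection, together with a spectral-radius argument for the norm estimate. Throughout I write $\psi := \varphi^\pi_{\xi,\xi}\in L_p(\G)$ and use that $L_1(\G)=L_1^\sharp(\G)$ and $\pi(\omega)^* = \pi(\omega^\sharp)$ by unimodularity.

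For the first assertion I would take $H_0 = \pi(L_1(\G))\xi$, which is dense by cyclicity, and show that every coefficient function with $\zeta,\zeta'\in H_0$ lies in $L_p(\G)$. Writing $\zeta=\pi(\omega_1)\xi$ and $\zeta'=\pi(\omega_2)\xi$ and applying Lemma \ref{lem:leftright_action} twice gives
\[
\varphi^\pi_{\pi(\omega_1)\xi,\,\pi(\omega_2)\xi} = \omega_1 \star \psi \star \omega_2^\sharp,
\]
where the outer operations are the bimodule actions of $L_1(\G)$ on $L_\infty(\G)$. Up to an isometric twist by the (unitary) antipode, which is harmless in the unimodular case, these are the interpolated actions $\alpha_p,\beta_p$, which are bounded on $L_p(\G)$; since $\psi\in L_p(\G)$, the iterated convolution lies in $L_p(\G)$ as well. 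Hence $\pi$ is an $L_p$-representation with canonical dense subspace $H_0$.

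For the estimate, set $b=\omega^\sharp\star\omega$ and $T=\pi(b)=\pi(\omega)^*\pi(\omega)\ge 0$, so $\|\pi(\omega)\|^2=\|T\|$; since $T$ is self-adjoint, $\|T\|^{2k}=\|T^{2k}\|=\|\pi(b^{\star 2k})\|$ for every $k$, so it suffices to bound $\|T\|$. The idea is to detect $\|T\|$ through the vectors $\eta=\pi(a)\xi$ ($a\in L_1(\G)$): for each such $\eta$ one computes
\[
\langle T^{2k}\eta \,|\, \eta\rangle = \langle \pi(a^\sharp\star b^{\star 2k}\star a)\xi \,|\, \xi\rangle = \langle a^\sharp\star b^{\star 2k}\star a,\ \psi\rangle,
\]
and, using the $L_p$--$L_q$ trace duality together with the contractivity $\|\alpha_q(\cdot)\|,\|\beta_q(\cdot)\|\le\|\cdot\|_1$ of the convolution actions on $L_q(\G)$, this is at most $\|a\|_1^2\,\|\psi\|_p\,\|b^{\star 2k}\|_q$ (note $b^{\star 2k}\in L_1(\G)\subseteq L_q(\G)$ by Lemma \ref{lem:Lpideal}). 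Taking $2k$-th roots and letting $k\to\infty$, the $a$- and $\psi$-dependent constants disappear, and since $\langle T^{2k}\eta|\eta\rangle^{1/2k}$ converges to the top of the support of the spectral measure $\mu_\eta^T$ of $T$ at $\eta$, one gets $\max\supp\mu_\eta^T \le \liminf_{k}\|b^{\star 2k}\|_q^{1/2k}$ for every $\eta\in H_0$. As $\eta$ ranges over the dense set $H_0$ the left-hand side supremizes to $\|T\|$, and taking square roots yields $\|\pi(\omega)\|\le\liminf_k\|(\omega^\sharp\star\omega)^{\star 2k}\|_q^{1/4k}$.

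The main obstacle is precisely this last point: the single cyclic vector $\xi$ need not ``see'' the full norm of $T=\pi(b)$, since $\mu_\xi^T$ can be supported strictly below $\|T\|$, so one cannot run the power trick with $\xi$ alone. The resolution is to perform the spectral-radius estimate uniformly over all $\eta=\pi(a)\xi$ in the dense subspace and observe that the resulting bound $\liminf_k\|b^{\star 2k}\|_q^{1/2k}$ is \emph{independent of $a$}: the factor $\|a\|_1^2\|\psi\|_p$ is extinguished by the exponent $1/2k$ in the limit. This is exactly where the $L_q$-boundedness of $\alpha_q,\beta_q$ (used to pull $\|b^{\star 2k}\|_q$ cleanly out of $\|a^\sharp\star b^{\star 2k}\star a\|_q$) and the trace-Hölder inequality carry the argument, while the supremum over $\eta$ upgrades the spectral-measure bound to a bound on the operator norm.
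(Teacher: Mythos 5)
Your proof is correct and takes essentially the same route as the paper's: the same dense subspace $H_0=\pi(L_1(\G))\xi$ with $\varphi^\pi_{\pi(\omega_1)\xi,\pi(\omega_2)\xi}=\omega_1\star\varphi^\pi_{\xi,\xi}\star\omega_2^\sharp\in L_p(\G)$ via the interpolated actions, and the same power-trick estimate over $H_0$ combined with trace--H\"older duality. The only cosmetic difference is that you re-derive the norm formula $\|A\|=\sup_{\eta\in H_0}\lim_k\langle (A^*A)^{2k}\eta|\eta\rangle^{1/4k}$ via spectral measures, whereas the paper simply cites it (from Cowling--Haagerup--Howe), and you bound $\|\varphi^\pi_{\eta,\eta}\|_p$ explicitly by $\|a\|_1^2\|\psi\|_p$ rather than just noting it is finite.
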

\begin{proof}
Let $(\pi, H ,\xi)$ be as above and assume that $\varphi^\pi_{\xi,\xi} \in L_p(\G)$ for some $1 \le p < \infty$.  Then  $H_0 = \{\pi(\omega)\xi: \omega \in  L_1(\G)\}$ is a dense linear subspace of $H$.  For any  $a = \pi(\omega_1)\xi, b = \pi(\omega_2)\xi \in H_0$, we have from Lemma \ref{lem:leftright_action} and the definitions of the left and right actions $\alpha_p,\beta_p:L_1(\G) \to \mc B(L_p(\G))$ that
\[
\varphi^\pi_{a,b} = \omega_1 \star \varphi^\pi_{\xi,\xi}\star \omega_2^\sharp = \beta_p(S_*(\omega_1))\alpha_p(\omega_2^*)\varphi^\pi_{\xi,\xi} \in L_p(\G).
\]
Therefore $\pi$ is an $L_p$-representation.

To prove the second assertion, we use the following general fact about bounded operators on $H$ (see for example the proof of  \cite[Theorem 1]{CoHaHo88}).  Given $A \in \mc B(H)$ and any dense subspace $H_0 \subset H$, we have
\begin{align} \label{eqn:Haagerup}
\|A\| = \sup_{\eta \in   H_0} \lim_{k \to \infty} \langle (A^*A)^{2k} \eta |\eta\rangle^{\frac{1}{4k}}.
\end{align} 
Now let $H_0$ be the dense subspace defined above and $A = \pi(\omega)$ for some   $\omega \in L_1(\G)$.  Then by \eqref{eqn:Haagerup},
\[\|\pi(\omega)\| = \sup_{\eta \in   H_0} \lim_{k \to \infty} \langle (\pi(\omega^\sharp \star \omega))^{2k} \eta |\eta\rangle^{\frac{1}{4k}} =  \sup_{\eta \in   H_0} \lim_{k \to \infty}\langle (\omega_x^\sharp \star \omega_x)^{2k}, \varphi^\pi_{\eta,\eta}\rangle^{\frac{1}{4k}}. \]  But since  $\varphi^\pi_{\eta,\eta} \in L_p(\G)$ for all $\eta \in H_0$,  \[\langle (\omega_x^\sharp \star \omega_x)^{2k}, \varphi^\pi_{\eta,\eta}\rangle^{\frac{1}{4k}} \le \| (\omega^\sharp \star \omega)^{2k}\|_q^{\frac{1}{4k}}\| \varphi^\pi_{\eta,\eta}\|_p^{\frac{1}{4k}}.\]
Letting $k \to \infty$, we finally obtain $\|\pi(\omega)\| \le \liminf_{k \to \infty} \|(\omega^\sharp \star \omega)^{\star 2k}\|_q^{\frac{1}{4k}}$.
\end{proof}

\section{Applications: exotic quantum group norms for free quantum groups} \label{section:applications}

In this section, we use the theory developed in the previous sections to study the $L_p$-C$^\ast$-algebras of some interesting examples of unimodular discrete quantum groups.  We will mainly focus on the orthogonal free quantum groups.  In Section \ref{section:unitary} we
also show how some of the techniques and results in the orthogonal case can be extended to  the case of the unitary free quantum groups. 
\begin{defn}[\cite{VaWa}]
Let $N \ge 2$ be an integer, let $F \in \text{GL}_N(\C)$ be such that $F \bar F = \pm 1$.  The \textit{orthogonal free quantum group} (\textit{with parameter matrix $F$}) is the discrete quantum group $\F O_F$ whose compact dual quantum group $O^+_F = \widehat{\F O_F}$ is given by the universal C$^\ast$-algebra 
\[C_u(O^+_F) = C^*\big(\{\hat u_{ij}\}_{1 \le i,j \le N} \ | \ \hat U = [\hat u_{ij}] \text{ is unitary and }  \hat U = F \bar{ \hat U} F^{-1}\big),\] where $\bar{\hat U} = [\hat u_{ij}^*]$. The coproduct $\hat \Delta_u:C_u(O^+_F) \to C_u(O^+_F) \otimes C_u(O^+_F)$ is determined uniquely by the equations
\[\hat \Delta_u(\hat u_{ij}) = \sum_{k=1}^N \hat u_{ik} \otimes \hat u_{kj} \qquad (1 \le i,j \le N).\]  
\end{defn}

Let us first recall a few important facts about the quantum groups $\F O_F$. 

\begin{rems}
\begin{enumerate}
\item Note that the coproduct $\hat \Delta_u$ is defined so that $\hat U$ is always a unitary representation of $O^+_F$.  $\hat U$ is called the \textit{fundamental representation} of $O^+_F$.   
\item The definition of $\F O_F$ makes sense for any $F\in \text{GL}_N(\C)$.  The additional condition $F \bar F = \pm 1$ is equivalent to the fact that $\hat U$ is always an irreducible unitary representation of $O^+_F$.  Indeed, Banica \cite{Ba0} showed that $\hat U$ is irreducible if and only if $F\bar F  \in \R 1$.  Moreover, $O^+_F$ and $O^+_{\lambda^{-1/2}F}$ are isomorphic as compact quantum groups for any $\lambda > 0$.   This is why we normalize so that  $F \bar F = \pm 1$.
\item It is also shown in \cite{Ba0} that $\F O_F$ is amenable if and only if $N =2$.  Since we are interested in non-amenable quantum groups, we will assume for the remainder of the paper that $N \ge 3$.
\item In \cite{BiDeVa}, it is shown that $\F O_F$ is unimodular if and only if $F$ is a scalar multiple of a unitary matrix.  As we shall assume  that $\F O_F$ is unimodular, and we normalize so that $F\bar{F} = \pm 1$, we may take $F$ to be unitary. 
\end{enumerate}
\end{rems}

\begin{notat}
In order to simplify our notation, for the remainder of this section we will denote the $L_p$-C$^\ast$-algebraic quantum group associated with $\F O_F$ by $(C^*_p(\F O_F),  \hat \Delta_p)$.
\end{notat}

The following theorem is the main result of the section and shows that the $L_p$-C$^\ast$-algebras of unimodular orthogonal free quantum groups provide examples of exotic quantum group C$^*$-algebras.  

\begin{thm} \label{thm:free_orthogonal}
Let $F\in \mc U_N$ with $F\bar F = \pm 1$. 
\begin{enumerate}
\item \label{surjective} For each $2 \le p < p' <\infty$, the canonical quotient map \[C^*_{p'}(\F O_F) \to C^*_p(\F O_F)\] extending the identity map on $L_1(\F O_F)$ is not injective.  
\item \label{non_iso} For each $2 <p < \infty$, the C$^\ast$-algebra $C^*_p(\F O_F)$ is not isomorphic (as a C$^\ast$-algebra) to either the universal C$^\ast$-algebra $C_u(O^+_F)$ or the reduced C$^\ast$-algebra $C(O^+_F)$. 
\end{enumerate}
\end{thm}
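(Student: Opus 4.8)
The plan is to follow the strategy announced in the introduction, which parallels Okayasu's analysis of $C^*_{\ell_p}(\F_k)$: the engine is a characterization of which norm one positive definite functions on $\F O_F$ are $L_p$-continuous (i.e.\ extend to states on $C^*_p(\F O_F)$). First I would establish a non-commutative $L_q$ property of rapid decay for $\F O_F$ (Proposition~\ref{prop:qHaagerup}). Using the $SU(2)$-type fusion rules for $\Irr(O^+_F) \cong \N$ and the exponential growth of the (quantum) dimensions $d_n$ (valid since $N \ge 3$), together with complex interpolation between the $L_1$- and $L_2$-endpoint estimates, this should yield polynomial-in-$n$ bounds of the form $\|\lambda(\omega_x)\| \le P(n)\|x\|_q$ for $x$ supported on the first $n$ spectral blocks. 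Feeding these bounds into Lemma~\ref{lem:Kac_result} and the $D$-representation formalism of Section~\ref{section:dreps}, I would then prove Theorem~\ref{thm:characterization_p_continuity}: a norm one positive definite function $\varphi$ is $L_p$-continuous if and only if $\varphi \in \bigcap_{q > p} L_q(\F O_F)$, where sufficiency comes from the rapid decay estimate and necessity from Hölder duality pairing $\varphi$ against the powers appearing in Lemma~\ref{lem:Kac_result}.

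For part~\ref{surjective}, recall that $2 \le p < p'$ gives $L_p(\F O_F) \subseteq L_{p'}(\F O_F)$, hence a canonical quotient map $C^*_{p'}(\F O_F) \to C^*_p(\F O_F)$; under the duality $B_D(\G) \cong C^*_D(\G)^*$ from Section~\ref{section:FSA}, this map is injective exactly when $B_{p'}(\F O_F) = B_p(\F O_F)$. So it suffices to exhibit a norm one positive definite function lying in $B_{p'}$ but not $B_p$. I would produce one from the Haagerup property of $\F O_F$ \cite{BrAP, DeFrYa}: there is a family of \emph{central} positive definite functions $\varphi_r$ ($0<r<1$), scalar $a_n \sim r^n$ on each block, and a direct computation gives $\|\varphi_r\|_q^q \sim \sum_n d_n^2 r^{qn}$. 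Because $d_n$ grows like $\gamma^n$ with $\gamma > 1$, this series has a critical exponent $p_c(r)$ that sweeps out all of $(0,\infty)$ as $r$ ranges over $(0,1)$ (polynomial corrections to $a_n$ not affecting $p_c$). Choosing $r$ with $p < p_c(r) \le p'$ places $\varphi_r$ in $\bigcap_{q>p'}L_q$ but outside $L_{q_0}$ for some $q_0 \in (p,p']$; by Theorem~\ref{thm:characterization_p_continuity} this $\varphi_r$ is $L_{p'}$-continuous but not $L_p$-continuous, proving part~\ref{surjective}.

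For part~\ref{non_iso} I would separate $C^*_p(\F O_F)$ from the two canonical completions using abstract C$^\ast$-invariants. Applying part~\ref{surjective} to the pair $2 < p$ shows the quotient $C^*_p(\F O_F) \to C^*_2(\F O_F) \cong C(O^+_F)$ has nontrivial kernel, so $C^*_p(\F O_F)$ is not simple; as $C(O^+_F)$ is simple for $N \ge 3$, the two are non-isomorphic. To separate $C^*_p(\F O_F)$ from $C_u(O^+_F)$ I would count characters. The abelianization of $C_u(O^+_F)$ is $C(G_F)$ for the nontrivial compact group $G_F = \{g \in \mc U_N : g = F\bar g F^{-1}\}$, so $C_u(O^+_F)$ has many characters. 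On the other hand, any character of $C^*_p(\F O_F)$ restricts to a one-dimensional representation of $\F O_F$, i.e.\ a group-like unitary $u \in L_\infty(\F O_F)$ whose coefficient function is $u$ itself; since each block $u_n$ is unitary, $\|u\|_q^q = \sum_n d_n^2 = \infty$ for every finite $q$, so by the necessity direction of Theorem~\ref{thm:characterization_p_continuity} the function $u$ is not $L_p$-continuous. Hence $C^*_p(\F O_F)$ has no characters at all, and possessing a character being an isomorphism invariant, $C^*_p(\F O_F) \not\cong C_u(O^+_F)$.

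The main obstacle I expect lies in the first paragraph: establishing the non-commutative $L_q$ property of rapid decay and the ensuing characterization. Unlike the free group case, the spaces $L_p(\F O_F)$ are honestly non-commutative $\ell_p$-sums of Schatten classes, so the decay estimates cannot be read off from a word-length function but must be extracted from the fusion combinatorics and the Haar trace, and the passage from the $L_1/L_2$ endpoints to general $p$ forces systematic use of complex interpolation and the operator space structure underlying Corollary~\ref{cor:FS_BA}. A secondary point is confirming that the Haagerup functions $\varphi_r$ realize the full range of critical exponents with the precise $r^n$ decay needed to place $p_c(r)$ strictly inside $(p,p']$; this rests on the explicit description of the central positive definite functions of $\F O_F$ in terms of dilated Chebyshev ($q$-deformed) characters.
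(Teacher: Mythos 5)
Your proposal is correct and follows essentially the same route as the paper: the $L_q$-rapid decay estimate feeds the characterization of $L_p$-continuous positive definite functions, the central Chebyshev/Haagerup multipliers $\varphi_r$ (comparable to $r^\ell$) with the critical-exponent analysis of $\sum_n d_n^2 r^{qn}$ supply the witness in $B_{L_{p'}(\F O_F)}(\F O_F)\setminus B_{L_p(\F O_F)}(\F O_F)$, and part (2) is settled exactly as in the paper by showing $C^*_p(\F O_F)$ has no characters (since any group-like unitary has $\|p_n U\|_p^p = d_n^2$) together with the simplicity of $C(O^+_F)$ versus the non-simplicity coming from part (1). The only cosmetic difference is that you phrase the characterization as membership in $\bigcap_{q>p} L_q(\F O_F)$, whereas the paper's Theorem \ref{thm:characterization_p_continuity} uses the weak-$L_p$ condition $r^\ell\varphi\in L_p(\F O_F)$ for all $0<r<1$; these are equivalent for positive definite functions, and both verdicts agree on the exponentially decaying witnesses you use.
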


\begin{rem} \label{rem:exotic}
Theorem \ref{thm:free_orthogonal} shows that there exists an uncountable family of pairwise non-isomorphic compact quantum group completions $(C^*_p(\F O_F),  \hat\Delta_p)_{p \in (2,\infty)}$ of
$\Pol(O^+_F)$. Note, however, that it is an open question whether or not there is a C$^\ast$-algebra isomorphism $C^*_p(\F O_F)  \cong C^*_{p'}(\F O_F)$ when $p < p' \in (2,\infty)$.  All that we are able to  prove is that the canonical quotient map $C^*_{p'}(\F O_F) \to C^*_p(\F O_F)$ extending the identity map on $\Pol(O^+_F)$ is not an isomorphism.  In fact, at this time we know very little about the structure of the C$^\ast$-algebras $C^*_p(\F O_F)$, aside from the following theorem. 
\end{rem}

\begin{thm} \label{thm:uniquetrace}
Let $F\in \mc U_N$ with $F\bar F = \pm 1$.   Then for each $2\le p < \infty$, the Haar state is the unique tracial state on $C^*_p(\F O_F)$. 
\end{thm}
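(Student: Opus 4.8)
The plan is to show that every tracial state $\tau$ on $C^*_p(\F O_F)$ coincides with the Haar state $\hat h$ (which is tracial by unimodularity). Write $\{u_n\}_{n \ge 0}$ for the irreducible representations of $O^+_F$, with $u_0$ trivial and $u_1 = \hat U$ the fundamental representation, and recall the $SU(2)$-type fusion rules $u_1 \otimes u_n \cong u_{n-1} \oplus u_{n+1}$ for $n \ge 1$. Let $\chi_n = \sum_{i} \hat u^{(n)}_{ii} \in \Pol(O^+_F)$ be the character of $u_n$, so that $\hat h(\chi_n) = \delta_{n,0}$. My first step is a reduction to the scalars $\tau(\chi_n)$. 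Viewing $\tau$ as a $C^*_p(\F O_F)$-continuous positive definite function $\varphi \in B_{L_p(\F O_F)}(\F O_F) \subseteq L_\infty(\F O_F) = \prod_n \mc B(H_n)$ under the identification $B_D(\G) \cong C^*_D(\G)^*$ from Section \ref{section:FSA}, the trace identity $\langle \omega \star \omega', \varphi\rangle = \langle \omega'\star\omega, \varphi\rangle$ is equivalent to $\Delta\varphi$ being invariant under the tensor flip, which for a discrete quantum group means precisely that $\varphi$ is central, i.e. $\varphi = (c_n 1_{H_n})_n$. Such a $\varphi$ is determined by the scalars $\tau(\chi_n)$, and hence $\tau = \hat h$ if and only if $\tau(\chi_n) = 0$ for every $n \ge 1$.

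The ``conjugation by generators'' is implemented by the unital completely positive map
\[ P \colon \Pol(O^+_F) \to \Pol(O^+_F), \qquad P(a) = \frac{1}{N}\sum_{i,j=1}^N \hat u_{ij}\, a\, \hat u_{ij}^*, \]
that is, conjugation by the fundamental unitary $\hat U \in M_N(C^*_p(\F O_F))$ followed by normalization. Using only the unitarity of $\hat U$ (so that $\sum_{i,j}\hat u_{ij}^*\hat u_{ij} = N 1$ and $\sum_{i,j}\hat u_{ij}\hat u_{ij}^* = N 1$) together with the trace property, one checks that $\tau(P(a)) = \tau(a)$ for \emph{every} tracial state $\tau$ and every $a$; in particular $P$ preserves both $\tau$ and $\hat h$, so $\tau(\chi_n) = \tau(P^m(\chi_n))$ for all $m$. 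A categorical computation with the fusion rules shows that $P$ maps the central subalgebra into itself and acts there as the transition operator of the nearest-neighbour random walk on $\N$ attached to tensoring by $u_1$; thus $P^m(\chi_n)$ is a central element supported on $\{u_0, \dots, u_{n+m}\}$.

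It then suffices to prove that $\|P^m(\chi_n)\|_{C^*_p(\F O_F)} \to 0$ as $m \to \infty$ for each fixed $n \ge 1$, since then $|\tau(\chi_n)| = |\tau(P^m(\chi_n))| \le \|P^m(\chi_n)\|_{C^*_p(\F O_F)} \to 0$. Here both hypotheses of the theorem enter decisively. Since $N \ge 3$, the quantum group $\F O_F$ is non-amenable, and $P$ restricts to a self-adjoint contraction on the central part of $L_2(\hat h)$ that fixes $\C 1$ exactly and, by non-amenability, has norm $\rho < 1$ on $\overline{\text{span}}\{\chi_n : n \ge 1\}$; as $\chi_n \perp 1$ for $n \ge 1$, this gives $\|P^m(\chi_n)\|_{L_2(\hat h)} \le \rho^m$. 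The $L_q$-property of rapid decay (Proposition \ref{prop:qHaagerup}), with $q$ conjugate to $p$, is then used to upgrade this exponential $L_2$-decay to decay in the $C^*_p$-norm: because $P^m(\chi_n)$ is supported on the first $n+m+1$ blocks, the rapid-decay estimates (in the spirit of the spectral-radius bound $\|\pi(\omega)\| \le \liminf_k \|(\omega^\sharp\star\omega)^{\star 2k}\|_q^{\frac{1}{4k}}$ of Lemma \ref{lem:Kac_result}) control its $C^*_p$-norm by a polynomial in $(n+m)$ times a suitable $L_q(\F O_F)$-norm governed by $\rho^m$.

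The main obstacle is precisely this last quantitative step: one must carry out the norm bookkeeping so that the polynomial growth coming from the spreading support of $P^m(\chi_n)$ is genuinely dominated by the exponential spectral-gap decay, after correctly matching the $L_2(\hat h)$-decay with the weighted non-commutative $L_q(\F O_F)$-norms that appear in the rapid-decay inequality. Establishing $\rho < 1$ (non-amenability of the fusion random walk for $N \ge 3$) and pinning down the exact polynomial exponent in the rapid-decay estimate are the two delicate ingredients that make the argument go through, and both rest on the detailed representation theory of $O^+_F$ and the $L_q$-estimates developed earlier in the section. I expect the interplay of these exponential and polynomial rates to be the crux of the proof.
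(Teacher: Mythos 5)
Your overall strategy --- averaging by a ``conjugation by generators'' map that every tracial state absorbs, and then killing the averaged element in $C^*_p$-norm via the $L_q$-rapid-decay inequality --- is exactly the strategy of the paper (Section \ref{section:uniquetrace}). The reduction to central elements and the restriction to characters $\chi_n$ is a cosmetic difference: the paper instead applies the averaging map $\Phi$ of \eqref{eqn:conjugation} directly to arbitrary $\omega_x$, $x \in C_c(\F O_F)$, and proves the Dixmier-type convergence $\Phi^k(\omega_x) \to \langle \omega_x, p_0\rangle\,\omega_{p_0}$ in $C^*_p$-norm, which immediately forces every $C^*_p$-continuous trace to be the Haar state.

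The genuine gap is in the step you yourself flag as the crux. You propose to establish only an $L_2(\hat h)$-spectral gap, $\|P^m(\chi_n)\|_{2} \le \rho^m$ with $\rho < 1$, and then to feed this into the rapid-decay bound $\|\omega_x\|_{C^*_p(\F O_F)} \le D_N (n(x)+1)^{1+1/p}\|x\|_q$ of Corollary \ref{cor:Lqestimate_operatornorm}. But that bound requires the $L_q$-norm with $q$ the \emph{conjugate} exponent of $p$, so $q < 2$ when $p > 2$, and passing from $\|\cdot\|_2$ to $\|\cdot\|_q$ on an element supported on the blocks $0,\dots,n+m$ costs, by H\"older, a factor of order $\bigl(\sum_{l \le n+m} d_l^2\bigr)^{(2-q)/(2q)} \sim \rho_N^{(n+m)(2-q)/q}$, where $\rho_N + \rho_N^{-1} = N$. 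This is an \emph{exponential} loss in $m$, not a polynomial one, and for $q$ close to $1$ (i.e.\ $p$ large) it is not dominated by any spectral gap $\rho^m$ of the fusion random walk (whose gap is only polynomial in $1/N$, while $\rho_N^{-(2-q)/q} \to \rho_N^{-1} \approx 1/N$). So the argument as written breaks down precisely for large $p$. The paper's fix is to obtain exponential decay \emph{directly in the $L_q$-norm}: Lemma \ref{lem:L_1contractive} shows $\|\Phi\|_{L_1 \to L_1} = 1$ (this is not the trivial triangle-inequality bound, which is $N$; it uses the complete contractivity of convolution and the fact that $[\omega_{ij}]$ has norm one in $M_N(L_1(\F O_F))$ as the predual of a conditional expectation), and complex interpolation of this with the Vaes--Vergnioux estimate $\|\Phi\|_{L_{2,0}\to L_{2,0}} \le C(N) < 1$ gives $\|\Phi\|_{L_{q,0}\to L_{q,0}} \le C(N)^{2(1-1/q)} < 1$ for every $q \in (1,2]$ (Proposition \ref{prop:contractive}). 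Only then does the polynomial factor $(n+2k+1)^{1+1/p}$ from the linearly spreading support lose to the exponential $C(q,N)^k$. Without an $L_1$-contractivity statement and the interpolation step (or some substitute giving $L_q$-decay directly), your proof cannot be completed.
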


To prove theorems \ref{thm:free_orthogonal} and \ref{thm:uniquetrace},  we will need several technical results, which may be of independent interest.  In particular, we prove several quantum group analogues of results about the $\ell_p$-C$^\ast$-algebras of free groups proved by Okayasu in \cite{Ok12}.

\subsection{Preliminaries on orthogonal free quantum groups} \label{section:prelims_orthogonal}

Let $F \in \text{GL}_N(\C)$ with $F\bar F = \pm 1$.  Let us briefly review the representation theory of $O^+_F$ that was developed by Banica in \cite{Ba0}.  The collection   $\Irr(O^+_F)$ of equivalence classes of irreducible unitary representations of $O^+_F$ can be identified with $\N_0 = \N \cup \{0\}$  and therefore
 \[L_\infty(\F O_F) = \prod_{n \in \N_0} \mc B(H_n).\] Here, $H_0 = \C$  corresponds to the equivalence class of the trivial unitary representation of $O^+_F$ and $H_1 = \C^N$ corresponds to the fundamental unitary representation $\hat U^1= \hat U = [\hat u_{ij}] \in M_N(C_u(O^+_F))$.  For $n\ge 2$, one recursively defines the irreducible unitary representation $\hat U^n$ to be the unique subrepresentation of $\hat U^{\otop n}$ acting on $H_1^{\otimes n}$ that is not equivalent to any $\hat U^k$ for any $k < n$.  We denote by $\Pi_n \in \mc B(H_1^{\otimes n})$ the orthogonal projection onto the subrepresentation equivalent to  $\hat U^n$ and concretely identify $H_n$ with the subspace $\Pi_n(H_1^{\otimes n})$.  Under the above labeling of $\Irr(O^+_F)$, $O^+_F$ has the following fusion and conjugation rules (which are the same as those of  $SU(2)$):  \begin{align}\label{eqn:fusion}\overline{\hat U^n} \cong \hat U^n \quad \& \quad  \hat U^n \otop \hat U^k \cong \bigoplus_{r=0}^{\min\{n,k\}} \hat U^{n+k - 2r} \qquad (n,k \in \N_0).
\end{align}  Here, $\cong$ means unitary equivalence of unitary representations. In particular, \eqref{eqn:fusion} implies that for each $(l,k,n) \in \N_0^3$, there is at most one irreducible subrepresentation of $\hat U^n \otop \hat U^k$ unitarily equivalent to $\hat U^l$. In this case, we view $\hat U^l$ as a multiplicity-free subrepresentation of $\hat U^n \otop U^k$ in the obvious way.

Let $\rho > 1$ be such that $\rho+\rho^{-1} = N$.  Then we have the dimension formulas  \[d_n = \dim H_n = \frac{\rho^{n+1} - \rho^{-n-1}}{\rho - \rho^{-1}} = S_n(N),\]  where $(S_n)_{n \in \N_0}$ are the type 2 Chebyschev polynomials defined by 
\[
S_0(x) = 1, \ S_1(x) = x, \quad \& \quad S_{n+1}(x) = xS_n(x)- S_{n-1}(x) \qquad (n \ge 1). 
\]    

\subsection{The $L_q$-property of rapid decay for $\F O_F$} \label{section:q_Haagerup}

A crucial tool in our analysis of the C$^\ast$-algebras $C^*_p(\F O_F)$ for $2 \le p < \infty$ is an $L_q$-version of the property of rapid decay for $\F O_F$ (Proposition \ref{prop:qHaagerup}), where $1 < q \le 2$ is the conjugate exponent to $p$.  The $q = p =2$ version of this result was proved for $\F O_F$ by Vergnioux \cite{Ve}, which in turn is a quantum version of Haagerup's famous result \cite{Ha} showing that the free groups $\F_k$ have the property of rapid decay.   

We begin with a technical result.  In the following, let $\ell_2(d)$ be a $d$-dimensional complex Hilbert space and let $\{e_{ij}\}_{1 \le i,j \le d}$ be the canonical system of matrix units in $\mc B(\ell_2(d))$ (relative to a fixed orthonormal basis $(e_i)_i$ of $\ell_2(d)$).  Given an Hilbert space $H$ and $1 \le p \le \infty$, we also write $\mc S_p(H)$ for the ideal of Schatten-$p$-class operators.  We use the usual convention that $\mc S_\infty(H) = \mc B(H)$.  When $H = \ell_2(d)$, we simply write $\mc S_p^d$ for $\mc S_p(\ell_2(d))$. 

\begin{prop} \label{prop:Schatteninterpolation}
Let $H$ and $K$ be Hilbert spaces, $d \in \N$, and consider the linear map \begin{align*}&\Phi:\mc B(H \otimes \ell_2(d)) \otimes \mc B(\ell_2(d) \otimes K)  \to \mc B(H \otimes K);&\\
&\Phi:x \otimes y = \Big( \sum_{i,j=1}^d x_{ij}\otimes e_{ij}\Big) \otimes \Big(\sum_{k,l = 1}^d e_{kl} \otimes y_{kl}\Big) \mapsto \sum_{i,j=1}^d x_{ij} \otimes y_{ij} \qquad (x_{ij} \in \mc B(H), \ y_{kl} \in \mc B(K)). 
\end{align*}
Then for each $1 \le q \le 2$, we have \begin{align} \label{qineq}\|\Phi(x \otimes y)\|_{\mc S_q(H \otimes K)} \le \|x\|_{\mc S_q(H \otimes \ell_2(d))}\|y\|_{\mc S_q(\ell_2(d) \otimes K)} \qquad (x \in \mc S_q(H \otimes \ell_2(d)), \ y \in \mc S_q(\ell_2(d) \otimes K)).  \end{align}
\end{prop}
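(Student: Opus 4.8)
The plan is to verify the inequality at the two endpoints $q=2$ and $q=1$ and then obtain the intermediate range $1<q<2$ by bilinear complex interpolation. Throughout I regard $\Phi$ as a bilinear map, and I note first that, since $\ell_2(d)$ is finite dimensional, each block $x_{ij}=(1_H\otimes\langle e_i|)\,x\,(1_H\otimes|e_j\rangle)\in\mc B(H)$ is a compression of $x$, hence lies in $\mc S_q(H)$ with $\|x_{ij}\|_{\mc S_q(H)}\le\|x\|_{\mc S_q(H\otimes\ell_2(d))}$; thus $\Phi(x\otimes y)=\sum_{i,j}x_{ij}\otimes y_{ij}$ is a finite sum and is well defined on each $\mc S_q$.

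For $q=2$ the estimate is purely Hilbertian. Using that the Hilbert--Schmidt norm of a block operator is the square-sum of the norms of its blocks, together with $\|a\otimes b\|_{\mc S_2(H\otimes K)}=\|a\|_{\mc S_2(H)}\|b\|_{\mc S_2(K)}$, the triangle inequality and the Cauchy--Schwarz inequality give
\[
\Big\|\sum_{i,j}x_{ij}\otimes y_{ij}\Big\|_{\mc S_2(H\otimes K)}\le\sum_{i,j}\|x_{ij}\|_{\mc S_2(H)}\|y_{ij}\|_{\mc S_2(K)}\le\|x\|_{\mc S_2(H\otimes\ell_2(d))}\|y\|_{\mc S_2(\ell_2(d)\otimes K)},
\]
which is \eqref{qineq} for $q=2$.

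For $q=1$ I would reduce to rank-one operators. Writing $x$ and $y$ through their singular value decompositions as norm-convergent sums of rank-ones, $x=\sum_n|\phi^{(n)}\rangle\langle\psi^{(n)}|$ and $y=\sum_m|\mu^{(m)}\rangle\langle\nu^{(m)}|$ with $\sum_n\|\phi^{(n)}\|\|\psi^{(n)}\|=\|x\|_{\mc S_1}$ and $\sum_m\|\mu^{(m)}\|\|\nu^{(m)}\|=\|y\|_{\mc S_1}$, bilinearity of $\Phi$ and the triangle inequality reduce the claim to the case where $x=|\phi\rangle\langle\psi|$ and $y=|\mu\rangle\langle\nu|$ are rank one. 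The key observation is that $\Phi$ sends such a pair to a \emph{rank-one} operator: decomposing $\phi=\sum_i\phi_i\otimes e_i$, $\psi=\sum_j\psi_j\otimes e_j$ (with $\phi_i,\psi_j\in H$) and $\mu=\sum_k e_k\otimes\mu_k$, $\nu=\sum_l e_l\otimes\nu_l$ (with $\mu_k,\nu_l\in K$), one has $x_{ij}=|\phi_i\rangle\langle\psi_j|$ and $y_{ij}=|\mu_i\rangle\langle\nu_j|$, whence
\[
\Phi(x\otimes y)=\sum_{i,j}|\phi_i\otimes\mu_i\rangle\langle\psi_j\otimes\nu_j|=\big|\textstyle\sum_i\phi_i\otimes\mu_i\big\rangle\big\langle\textstyle\sum_j\psi_j\otimes\nu_j\big|.
\]
Hence $\|\Phi(x\otimes y)\|_{\mc S_1}=\|\sum_i\phi_i\otimes\mu_i\|\,\|\sum_j\psi_j\otimes\nu_j\|$, and by the triangle inequality followed by Cauchy--Schwarz, $\|\sum_i\phi_i\otimes\mu_i\|\le\sum_i\|\phi_i\|\|\mu_i\|\le(\sum_i\|\phi_i\|^2)^{1/2}(\sum_i\|\mu_i\|^2)^{1/2}=\|\phi\|\|\mu\|$, and similarly for the second factor. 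This gives $\|\Phi(x\otimes y)\|_{\mc S_1}\le\|\phi\|\|\mu\|\|\psi\|\|\nu\|=\|x\|_{\mc S_1}\|y\|_{\mc S_1}$ in the rank-one case, and summing over the decompositions yields \eqref{qineq} for $q=1$.

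Finally, to pass to $1<q<2$ I would invoke bilinear complex interpolation. The couples $(\mc S_2,\mc S_1)$ on each of $H\otimes\ell_2(d)$, $\ell_2(d)\otimes K$ and $H\otimes K$ are compatible, with $[\mc S_2,\mc S_1]_\theta=\mc S_q$ for $1/q=(1-\theta)/2+\theta$ (see \cite[Chapter 7]{Pi}), i.e.\ $\theta=2/q-1\in(0,1)$. Since $\Phi$ is bounded with constant $1$ from $\mc S_2\times\mc S_2$ to $\mc S_2$ and from $\mc S_1\times\mc S_1$ to $\mc S_1$, the bilinear interpolation theorem produces a bound $1^{1-\theta}1^{\theta}=1$ from $\mc S_q\times\mc S_q$ to $\mc S_q$, which is exactly \eqref{qineq}. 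The two endpoint estimates are elementary; the genuinely delicate point is this interpolation step, where one must correctly identify the three couples and the exponent $\theta$, and justify the bilinear version of the interpolation theorem together with the density argument identifying the interpolated map with $\Phi$. I would organize the latter by first establishing \eqref{qineq} on finite-rank $x,y$ (where the singular value decompositions are finite and every manipulation is literal) and then extending by continuity.
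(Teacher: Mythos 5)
Your proof is correct, and it coincides with the paper's at the $q=2$ endpoint (triangle plus Cauchy--Schwarz on Hilbert--Schmidt blocks) and in the final bilinear complex interpolation step (same couples, same exponent $\theta = 2/q-1$, same reference). Where you genuinely diverge is the $q=1$ endpoint. The paper works with Pisier's vector-valued trace classes, identifying $\mc S_1(L\otimes \ell_2(d))$ with $\mc S_1^d[\mc S_1(L)]$, factoring $x = a\cdot \hat x\cdot b$ with $a,b\in\mc S_2^d$, and estimating $\tilde a\cdot(\hat x\otimes y)\cdot\tilde b$ via the $M_d(\mc S_1)$-norms; this requires the operator-space machinery of \cite{Pi} and \cite{EfRu}. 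Your route instead reduces to rank-one operators by the singular value decomposition and exploits the structural fact that $\Phi$ sends a pair of rank-one operators to the rank-one operator $\bigl|\sum_i\phi_i\otimes\mu_i\bigr\rangle\bigl\langle\sum_j\psi_j\otimes\nu_j\bigr|$, after which the trace-norm bound is just Cauchy--Schwarz in the $\ell_2(d)$ index. Your computation of the blocks $x_{ij}=|\phi_i\rangle\langle\psi_j|$, $y_{ij}=|\mu_i\rangle\langle\nu_j|$ is right, and the resulting constant is the same (namely $1$). What your approach buys is elementarity: no vector-valued Schatten classes are needed, and the inequality becomes transparent as a "contraction along a shared leg preserves rank one" statement. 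The only point you must (and do) flag is the interchange of $\Phi$ with the norm-convergent SVD sums; this is harmless because $\Phi$ is a finite sum of the maps $(x,y)\mapsto x_{ij}\otimes y_{ij}$, each of which is separately $\mc S_1$-contractive in each variable since the blocks are compressions, so $\Phi$ is a priori bounded $\mc S_1\times\mc S_1\to\mc S_1$ and the rank-one estimate extends by density exactly as you describe.
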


\begin{proof}
It suffices to prove \eqref{qineq} when $q =1$ and $q=2$. The result for general $1 \le q \le 2$ will then follow by complex interpolation for bilinear maps \cite[p. 96]{BeLo}.  

We first consider $q=2$.   In this case, \eqref{qineq} follows immediately from the triangle and Cauchy-Schwarz inequalities:
\begin{align*}
\|\Phi(x \otimes y)\|_{\mc S_2(H \otimes K)} & = \Big\|\sum_{i,j = 1}^d x_{ij} \otimes y_{ij}\Big\|_{\mc S_2(H \otimes K)} \le \sum_{i,j = 1}^d  \|x_{ij}\|_{\mc S_2(H)}  \|y_{ij}\|_{\mc S_2(K)} \\
& \le \Big(\sum_{i,j = 1}^d \|x_{ij}\|_{\mc S_2(H)}^2\Big)^{1/2} \Big(\sum_{i,j = 1}^d \|y_{ij}\|_{\mc S_2(K)}^2\Big)^{1/2}   \\
&= \|x\|_{\mc S_2(H \otimes \ell_2(d))}\|y\|_{\mc S_2(\ell_2(d) \otimes K)}.
\end{align*}

We now turn to the case $q = 1$.  Following \cite[Chapter 7]{Pi}, given an operator space $V$, we let $\mc S_1^d[V]$ be the Banach space obtained by equipping the vector space $M_d(V)$ with the norm $\|x\|_{\mc S_1^d[V]} = \inf\{\|a\|_{\mc \mc S_{2}^d}\|b\|_{\mc S_2^d}\|\hat x\|_{M_d(V)} \ | \ \hat x \in M_d(V), \ a,b \in \mc S_2^d, \ x = a\cdot\hat x \cdot b\}$.
Then we have, for any Hilbert space $L$, an isometric identification \[\mc S_1(L \otimes \ell_2(d)) \cong \mc S_1^d[\mc S_1(L)] \quad \text{given by} \quad x = \sum_{i,j = 1}^d\ x_{ij} \otimes e_{ij} \mapsto [x_{ij}] \in \mc S_1^d[\mc S_1(L)].\]
Here, $\mc S_1(L)$ is given its canonical operator space structure as the predual of $\mc B(L)$.  (We remark that the spaces $\mc S_1^d[V]$ are also considered in \cite[Section 4.1]{EfRu}, but are denoted $\mc T_d(V)$ there.) Now take $x = [x_{ij}] \in \mc S_1^d[\mc S_1(H)]$, $y = [y_{ij}] \in \mc S_1^d[\mc S_1(K)]$ and write $x = a \cdot \hat x \cdot b$ for some $a,b \in \mc S_2^d$, $\hat x = [\hat x_{ij}] \in M_d(\mc S_1(H))$.  Let $\tilde a \in M_{1, d^2}(\C)$ and $\tilde{b} \in M_{d^2,1}(\C)$
be the row and column matrices with entries given by $\tilde{a}_{1,ij} = a_{ji}$ and $\tilde{b}_{ij,1} = b_{ij}$ for each $1 \le i,j \le d$.  Then 
\begin{align*}\Phi(x \otimes y) &= \sum_{i,j,k,l=1}^d a_{ik}\hat x_{kl} \otimes y_{ij}b_{lj} = \tilde{a} \cdot(\hat x \otimes y) \cdot \tilde b,
\end{align*}
and it follows from the definition of the $\mc S_1(H \otimes K)$-norm (see \cite[Chapter 4]{Pi})  that 
\begin{align*}\|\Phi(x \otimes y)\|_{\mc S_1(H \otimes K)} &=\|\tilde{a} \cdot(\hat x \otimes y) \cdot \tilde b\|_{\mc S_1(H \otimes K)} \\
&\le \|\tilde a\|_{M_{1,d^2}} \|\tilde b\|_{M_{d^2, 1}} \|\hat x\|_{M_d(\mc S_1(H))} \|y\|_{M_d(\mc S_1(K))} \\
&= \|a\|_{\mc S_2^d}\|b\|_{\mc S_2^d}\|\hat x\|_{M_d(\mc S_1(H))} \|y\|_{M_d(\mc S_1(K))}.
\end{align*}
Taking the infimum of the above quantity over all representations $x = a \cdot \hat x \cdot b$, we obtain 
\[\|\Phi(x \otimes y)\|_{\mc S_1(H \otimes K)} \le \|x\|_{\mc S_1^d[\mc S_1(H)]}\|y\|_{M_d(\mc S_1(K))}.\]To conclude, we just observe that 
\begin{align*}
\|y\|_{\mc S_1^d[\mc S_1(K)]} &=  \inf\{\|a\|_{\mc S_{2}^d}\|b\|_{\mc S_2^d}\|\hat y\|_{M_d(\mc S_1(K))} \ | \ \hat y \in M_d(\mc S_1(K)), \ a,b \in \mc S_2^d, \ y = a\cdot\hat y \cdot b\} \\
& \ge  \inf\{\|a\|_{\mc B(\ell_2(d))}\|b\|_{\mc B(\ell_2(d))}\|\hat y\|_{M_d(\mc S_1(K))} \ | \ \hat y \in M_d(\mc S_1(K)), \ a,b \in \mc S_2^d, \  y = a\cdot\hat y \cdot b\} \\
& \ge \|y\|_{M_d(\mc S_1(K))}.
\end{align*}
\end{proof}

Recall that for a unimodular discrete quantum group $\G$, $\alpha_p:L_1(\G) \to \mc B(L_p(\G))$ denotes the left action of $L_1(\G)$ on $L_p(\G)$.  The following lemma should be compared with \cite[Lemma 1.3]{Ha} and \cite[Lemma 3.1]{Ok12} in the setting of free groups.  This result in the $q=2$ case is due to Vergnioux \cite[Section 4]{Ve}.    

\begin{lem} \label{lem:block_inequality}
Let $F\in \mc U_N$ with $F\bar F = \pm 1$.  Then there is a constant $D_N >0$ (depending only on $N$) such that for any $1 \le q \le 2$ and each triple $(l,k,n) \in \N_0^3$,  \[\|p_l (\alpha_q(\omega_x)(y))\|_{q} \le D_N \|x\|_{q}\|y\|_{q} \qquad (x \in \mc B(H_n), \  y \in \mc B(H_k)).\]
\end {lem}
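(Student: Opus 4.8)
The plan is to reduce the statement to a single ``fusion block'' and then invoke Proposition~\ref{prop:Schatteninterpolation}. Since $x\in\mc B(H_n)$ and $y\in\mc B(H_k)$ lie in $C_c(\F O_F)$, Lemma~\ref{lem:compatibility} identifies $\alpha_q(\omega_x)(y)$ with the convolution $x\star y$, so it suffices to bound $\|p_l(x\star y)\|_q$. By the fusion rules \eqref{eqn:fusion}, $p_l(x\star y)=0$ unless $l=n+k-2r$ for some $0\le r\le\min\{n,k\}$, and for such $l$ the representation $\hat U^l$ occurs in $\hat U^n\otop\hat U^k$ with multiplicity one. We therefore fix such a triple $(n,k,l)$ with overlap parameter $r$. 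The argument follows Vergnioux's treatment of the case $q=2$ in \cite[Section~4]{Ve}, the new ingredient being that his Hilbert-space estimate is replaced by the $\mc S_q$-bound of Proposition~\ref{prop:Schatteninterpolation}.

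The key step is an explicit formula for $p_l(x\star y)$ encoding the ``cancellation'' of a common middle block of type $r$. Choosing isometric intertwiners
\[ s\in\Mor(\hat U^n,\hat U^{n-r}\otop\hat U^r),\quad t\in\Mor(\hat U^k,\hat U^r\otop\hat U^{k-r}),\quad u\in\Mor(\hat U^l,\hat U^{n-r}\otop\hat U^{k-r}), \]
each unique up to a phase by multiplicity-freeness, I would show, from the description of the coproduct of a discrete quantum group in terms of such intertwiners together with the self-duality morphism $R\in\Mor(\hat U^0,\hat U^r\otop\hat U^r)$ of $H_r$, that
\[ p_l(x\star y)=c_{n,k,r}\; u^*\,\Phi\!\big(s x s^*\otimes t y t^*\big)\,u, \]
where $\Phi$ is the contraction map of Proposition~\ref{prop:Schatteninterpolation} applied with $H=H_{n-r}$, $K=H_{k-r}$ and middle space $H_r$ (so $d=d_r$), and $c_{n,k,r}$ is a scalar collecting the quantum-dimension normalisations coming from $R$. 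Because $\F O_F$ is of Kac type and $F$ is unitary, $R$ is a fixed multiple of an isometry and $s,t,u,R$ all have norms controlled by $N$; absorbing the $F$-twist arising from self-duality into $t$ (which preserves norms) puts the middle contraction exactly in the form of $\Phi$.

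Granting this formula, the estimate is immediate. The maps $z\mapsto szs^*$, $z\mapsto tzt^*$ and $z\mapsto u^*zu$ are contractive for every Schatten norm, so Proposition~\ref{prop:Schatteninterpolation} yields
\[ \|p_l(x\star y)\|_{\mc S_q(H_l)}\le |c_{n,k,r}|\;\|x\|_{\mc S_q(H_n)}\;\|y\|_{\mc S_q(H_k)}. \]
Converting the plain Schatten norms to $L_q$-norms introduces the Haar weights via $\|z\|_q=d_m^{1/q}\|z\|_{\mc S_q(H_m)}$ for $z\in\mc B(H_m)$, so the claimed inequality is equivalent to the scalar bound $|c_{n,k,r}|\,(d_l/(d_n d_k))^{1/q}\le D_N$ for all $q\in[1,2]$. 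As $d_l\le d_n d_k$ (since $\hat U^l\subseteq\hat U^n\otop\hat U^k$), the worst case is $q=2$, where one must check that $|c_{n,k,r}|\le D_N\,(d_n d_k/d_l)^{1/2}$.

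The main obstacle is precisely this last, purely representation-theoretic, estimate: controlling the normalisation scalar $c_{n,k,r}$ uniformly in $(n,k,r)$. Here I would use the explicit dimensions $d_m=S_m(N)$ together with the geometric growth $d_m\asymp\rho^{m}$ (with $\rho+\rho^{-1}=N$, $\rho>1$ for $N\ge3$). The scalar $c_{n,k,r}$, built from the coevaluation $R$ of $H_r$ (for which $\|R\|^2=d_r\asymp\rho^{r}$), is of order $(d_n d_k/d_l)^{1/2}\asymp\rho^{r}$ for $l=n+k-2r$, and the two quantities match up to a factor controlled only by the bounded ratios $d_m\rho^{-m}$. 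Taking $D_N$ to be the resulting supremum then completes the proof, simultaneously for all $1\le q\le 2$.
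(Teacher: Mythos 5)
Your overall strategy coincides with the paper's: reduce to a single fusion block via Lemma \ref{lem:compatibility} and multiplicity-freeness, express $p_l(x\star y)$ as a scalar times a ``middle contraction'' fed into Proposition \ref{prop:Schatteninterpolation}, convert Schatten norms to $L_q$-norms, and observe that $q=2$ is the extremal case because $d_l\le d_nd_k$. The only structural difference is cosmetic: you contract over the irreducible block $H_r$ via the duality morphism $R\in\Mor(\hat U^0,\hat U^r\otop\hat U^r)$, whereas the paper embeds $H_n\subseteq H_{n-r}\otimes H_1^{\otimes r}$ and $H_k\subseteq H_1^{\otimes r}\otimes H_{k-r}$ and contracts over $H_1^{\otimes r}$ using the explicit matrix-unit description of $V_l^{n,k}$ from \cite[Equations (7.1)--(7.3)]{VaVe}. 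Your bookkeeping of the exponents (the reduction to $|c_{n,k,r}|\le D_N(d_nd_k/d_l)^{1/2}$) is correct and matches inequality \eqref{eqn:schatten_ineq}.

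The gap is in the one step that carries all the difficulty: the uniform bound on $c_{n,k,r}$. Your heuristic --- that $c_{n,k,r}$ is ``built from the coevaluation $R$ with $\|R\|^2=d_r\asymp\rho^r$'' and therefore of order $(d_nd_k/d_l)^{1/2}$ up to bounded ratios $d_m\rho^{-m}$ --- does not account for the normalisation of the intertwiner itself. Writing $V_l^{n,k}=\kappa_{n,k,r}^{-1}(s^*\otimes t^*)(\iota\otimes d_r^{-1/2}R\otimes\iota)u$, the composite inside is a product of contractions, so $\kappa_{n,k,r}\le 1$ is free; but your scalar is $c_{n,k,r}=\frac{d_nd_k}{d_l}\cdot\frac{1}{\kappa_{n,k,r}^2 d_r}$ (up to bounded factors), so the required upper bound on $c_{n,k,r}$ is equivalent to a \emph{lower} bound on $\kappa_{n,k,r}$ that is uniform in $(n,k,r)$. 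This is a genuine representation-theoretic fact about $O^+_F$ for $N\ge 3$ --- it is exactly the content of inequality \eqref{eqn:dim_ineq}, which the paper extracts from the explicit product formula $\frac{1}{d_r}\prod_{s=1}^r\frac{d_sd_{n-r+s-1}d_{k-r+s-1}}{d_{l+s}d_{s-1}^2}$ for the normalisation of $V_l^{n,k}$ and the Chebyshev dimension formulas. Dimension asymptotics alone do not give it; you need either these explicit formulas or an equivalent quantitative statement about compositions of the relevant Jones--Wenzl-type projections. Since the paper does prove the bound, your constant does satisfy the claimed estimate, so the route is viable --- but as written the crucial inequality is asserted rather than established.
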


\begin{proof}  In the following, we use the notation $\cdot$ to denote the natural left and right action of a von Neumann algebra on its predual.  We also recall that $d_n = \dim H_n$ for each $n \in \N_0$. 

Fix $x \in \mc B(H_n)$, $y \in \mc B(H_k)$ and $l$ as above.  Since $y \in \mf M_h$ and all of the interpolated actions $\alpha_q:L_1(\F O_F) \to \mc B(L_q(\F O_F))$  (by construction) agree on $\mf M_h$, it follows from Lemma \ref{lem:compatibility} that 
\[
p_l (\alpha_q(\omega_x)(y)) = p_l(\alpha_\infty(\omega_x)(y)) = p_l(x \star y) \in \mf M_h
~\mbox {and} ~ 
\omega_{p_l (x \star y)}  = p_l \cdot(\omega_x\star \omega_y). 
\]
 Without loss of generality, we  assume $H_l \subseteq H_n \otimes H_k$ is a subrepresentation.  (Otherwise $p_l \cdot(\omega_x\star \omega_y) = 0$ and there is nothing to prove.)  Since the inclusion $H_l \subseteq H_n \otimes H_k$ is multiplicity-free, we can choose a unique (up to multiplication by $\T$) isometric inclusion of representations $V_{l}^{n,k}: H_l \hookrightarrow H_n \otimes H_k$ and obtain
\begin{align} \label{eqn:convformula} p_l(x\star y) = \Big(\frac{d_nd_k}{d_l}\Big) (V_{l}^{n, k})^*(x \otimes y)V_{l}^{n,k}.
\end{align}
To verify equation \eqref{eqn:convformula}, we recall from \cite[Proposition 3.2]{PoWo} the coproduct formula
\begin{align*} 
(p_n \otimes p_k)\Delta(p_la) = V_l^{n,k}(p_la) (V_{l}^{n, k})^* \qquad (a \in L_\infty(\F O_F)),
\end{align*}
and the formula  
\[p_l \cdot (\omega_{p_n} \star \omega_{p_k}) =  \Big(\frac{d_nd_k}{d_l}\Big) \omega_{p_l},\] which was obtained by Vergnioux in his proof of \cite[Lemma 4.6]{Ve}. 
Then we have, for each $a \in L_\infty(\F O_F)$,
\begin{align*}
\langle \omega_{p_l(x \star y)}, a\rangle&=\langle p_l \cdot (\omega_x \star \omega_y) , a \rangle \\
&= (h \otimes h) \Big(\Delta(p_lap_l)(x \otimes y)\Big) \\
&=  (h \otimes h) \Big((p_n \otimes p_k)\Delta(p_la)(x \otimes y)(p_n \otimes p_k)\Delta(p_l)\Big) \\
&= (h \otimes h) \Big(V_{l}^{n,k}(p_la)(V_{l}^{n,k})^*(x \otimes y)V_{l}^{n,k}p_l(V_{l}^{n,k})^*\Big) \\
&= (h \otimes h) \Bigg(\Delta\Big(p_la(V_{l}^{n,k})^*(x \otimes y)V_{l}^{n,k}\Big)(p_n \otimes p_k)\Bigg)\\
&= \langle p_l \cdot (\omega_{p_n} \star \omega_{p_k}), a\big((V_{l}^{n,k})^*(x \otimes y)V_{l}^{n,k}\big)  \rangle \\
&=\Big(\frac{d_nd_k}{d_l}\Big)\langle \omega_{(V_{l}^{n,k})^*(x \otimes y)V_{l}^{n,k}}, a  \rangle,
\end{align*}
which gives \eqref{eqn:convformula}.

Therefore, to prove the lemma, we must find a constant $D_N > 0$ (independent of $(k,l,n) \in   \N_0^3$) such that 
\begin{align*}\|p_l(x\star y)\|_q = \Big(\frac{d_nd_k}{d_l}\Big) \Big\|(V_{l}^{n,k})^*(x \otimes y)V_{l}^{n,k}\Big\|_q \le D_N\|x\|_q\|y\|_q \qquad (x \in \mc B(H_n), \ y \in \mc B(H_k), \ q \in [1,2]). \end{align*}
Equivalently, writing the above $L_q$-norms in terms of Schatten $q$-norms, we require $D_N$ to satisfy \begin{align} \label{eqn:schatten_ineq}\Big\|(V_{l}^{n,k})^*(x \otimes y)V_{l}^{n,k}\Big\|_{\mc S_q(H_l)} \le D_N\Big(\frac{d_l}{d_n d_k}\Big)^{1-1/q}\|x\|_{\mc S_q(H_n)}\|y\|_{\mc S_q(H_k)},
\end{align}
for each $x \in \mc B(H_n)$, $y \in \mc B(H_k)$ and $q \in [1,2]$.

Let $\{e_i\}_{i}$ be a fixed orthonormal basis for $H_1 = \C^N$.
Since  $F \in {\mathcal U}_N$ is a unitary matrix,  $\{f_i \}_i= \{Fe_i\}_i $ 
determines  another orthonormal basis for $H_1$.
Let $\{e_{ij}\}$ (resp. $\{f_{ij}\}$) denote the standard matrix units for $\mc B(H_1)$ relative to the orthonormal basis $\{e_i\}_i$ (resp. $\{f_i \}_i$).  
Given functions $i,j:\{1,\ldots, r\} \to \{1, \ldots, N\}$, we will also write 
$e_i$ (resp. $f_i$) for the tensors 
$e_{i(1)}\otimes \ldots \otimes e_{i(r)}  \in H_1^{\otimes r}$ 
(resp. $f_{i(1)}\otimes \ldots \otimes f_{i(r)}\in H_1^{\otimes r}$), and write 
$\{e_{ij}\}$ (resp. $\{f_{ij}\}$) for the matrix units $\{e_{i(1)j(1)}\otimes 
\ldots \otimes e_{i(r)j(r)} \} $  
(resp. $\{f_{i(1)j(1)}\otimes \ldots \otimes f_{i(r)j(r)}\}$) in $\mc B(H_1^{\otimes r})$.  
Since $H_l \subseteq H_n \otimes H_k$ is a subrepresentation, there is a unique 
$0 \le r \le \min\{n,k\}$ such that $l = k+n - 2r$.  Finally, let us identify $H_l$ with 
the highest weight subrepresentation 
$\Delta(p_l)(H_{n-r} \otimes H_{k-r})$ of $H_{n-r} \otimes H_{k-r}$, and similarly identify $H_n$, $H_k$ with the highest weight subrepresentations of $H_{n-r} \otimes H_{1}^{\otimes r}$ and $H_{1}^{\otimes r} \otimes H_{k-r}$, respectively.  With these identifications, we can uniquely write \[x = \sum_{i,j} x_{i,j}\otimes e_{ij}, \qquad y = \sum_{i,j} f_{ij} \otimes y_{i,j}, \]
where $i,j:\{1,\ldots, r\} \to \{1, \ldots, N\}$ are functions, $x_{i,j} \in \mc B(H_{n-r})$, and $y_{i,j} \in \mc B(H_{k-r})$.  Using the description of the isometry $V_{l}^{n,k}$ given by \cite[Equations $(7.1)$--$(7.3)$]{VaVe}, it follows that
\[
(V_{l}^{n,k})^*(x \otimes y)V_{l}^{n,k} =\Big( \frac{1}{d_r}\prod_{s=1}^r\frac{d_s d_{n-r+s-1}d_{k-r+s-1}}{d_{l+s}d_{s-1}^2} \Big) \Delta(p_l)\Big(\sum_{i,j} x_{i,j} \otimes y_{\check{i},\check{j}}\Big)\Delta(p_l),
\] 
where for each multi-index $i = (i(1), \ldots,i(r-1), i(r))$, we put $\check{i}:= (i(r), i(r-1), \ldots, i(1))$.

Using the fact that $d_n = \frac{\rho^{n+1}-\rho^{-n-1}}{\rho-\rho^{-1}}$, where $\rho > 1$ is given by $N = \rho + \rho^{-1}$, one easily finds a constant $\tilde D_N > 0$ (independent of $(n,k,l)$) such that 
\begin{align} \label{eqn:dim_ineq}
\frac{1}{d_r}\prod_{s=1}^r\frac{d_s d_{n-r+s-1}d_{k-r+s-1}}{d_{l+s}d_{s-1}^2}  \le \tilde D_N\Big(\frac{d_l}{d_nd_k}\Big)^{1/2}.
\end{align}
To conclude the proof of the lemma, we show that inequality \eqref{eqn:schatten_ineq} holds with the constant $D_N = \tilde D_N$.  Indeed, if we take the Schatten $q$-norm of $(V_{l}^{n,k})^*(x \otimes y)V_{l}^{n,k}$ using inequality \eqref{eqn:dim_ineq} and apply Proposition \ref{prop:Schatteninterpolation} to the result, we obtain 
\begin{align*}&\Big\|(V_{l}^{n,k})^*(x \otimes y)V_{l}^{n,k}\Big\|_{\mc S_q(H_l)} \\
&\le \tilde D_N\Big(\frac{d_l}{d_n d_k}\Big)^{1/2} \Big\|\Delta(p_l)\Big(\sum_{i,j} x_{i,j} \otimes y_{\check{i},\check{j}}\Big)\Delta(p_l)\Big\|_{\mc S_q(H_l)}\\
&\le  \tilde D_N\Big(\frac{d_l}{d_n d_k}\Big)^{1/2} \Big\|\sum_{i,j} x_{i,j} \otimes y_{\check{i},\check{j}}\Big\|_{\mc S_q(H_{n-r} \otimes H_{k-r})}\\
&\le  \tilde D_N\Big(\frac{d_l}{d_n d_k}\Big)^{1/2}\Big\|\sum_{i,j} x_{i,j}\otimes e_{ij}\Big\|_{\mc S_q(H_{n-r} \otimes H_1^{\otimes r})}\Big\| \sum_{i,j} e_{ij} \otimes y_{\check{i},\check{j}}\Big\|_{\mc S_q(H_1^{\otimes r} \otimes H_{k-r})} \\
&=\tilde D_N\Big(\frac{d_l}{d_n d_k}\Big)^{1/2}\|x\|_{\mc S_q(H_n)} \Big\| \sum_{i,j} e_{ij} \otimes y_{\check{i},\check{j}}\Big\|_{\mc S_q(H_1^{\otimes r} \otimes H_{k-r})} \\
&=\tilde D_N\Big(\frac{d_l}{d_n d_k}\Big)^{1/2}\|x\|_{\mc S_q(H_n)} \| y\|_{\mc S_q(H_{k})}. 
\end{align*}
In the last equality we have used the fact that $\| y\|_{\mc S_q(H_{k})} = \Big\| \sum_{i,j} e_{ij} \otimes y_{\check{i},\check{j}}\Big\|_{\mc S_q(H_1^{\otimes r} \otimes H_{k-r})}$.  This follows because $\sum_{i,j} e_{ij} \otimes y_{\check{i},\check{j}} = (v^* \otimes \iota)y (v \otimes \iota)$, where $v\in \mc U(H_1^{\otimes r})$ is the  unitary defined by $\sigma(f_i) = e_{\check{i}}$.  Finally, since $\Big(\frac{d_l}{d_n d_k}\Big)^{1/2} \le \Big(\frac{d_l}{d_n d_k}\Big)^{1-1/q}$, the proof is complete.
\end{proof}

Combining Lemma \ref{lem:block_inequality} with a standard H\"older estimate, we finally obtain our $L_q$-property of rapid decay for $\F O_F$. 

\begin{prop} \label{prop:qHaagerup}
Let $F\in \mc U_N$ with $F\bar F = \pm 1$.  Then there is a constant $D_N > 0$ depending only on $N$ such that for each $1 \le q \le 2$ and each $n \in \N_0$, \[\|\alpha_q(\omega_x)\|_{\mc B(L_q(\F O_F))} \le D_N(n+1)\|x\|_q \qquad (x \in \mc B(H_n)).\] 
\end{prop}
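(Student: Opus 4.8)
The plan is to deduce the global $L_q$-bound from the blockwise estimate in Lemma \ref{lem:block_inequality} by summing over the blocks of $L_q(\F O_F)$ and controlling the combinatorics of the fusion rules \eqref{eqn:fusion}. Since $C_c(\F O_F)$ is norm-dense in $L_q(\F O_F)$ for $q < \infty$ and $\alpha_q(\omega_x)$ is bounded, it suffices to bound $\|\alpha_q(\omega_x)(y)\|_q$ for finitely supported $y$. For such $y$ I would write $y = \sum_{k \in \N_0} y_k$ with $y_k = p_k y \in \mc B(H_k)$, so that (using Lemma \ref{lem:compatibility} and the fact that all the interpolated actions agree on $\mf M_h$) $\alpha_q(\omega_x)(y) = x \star y = \sum_k x \star y_k$. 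Projecting onto the $l$-th block gives
\[
p_l(\alpha_q(\omega_x)(y)) = \sum_{k} p_l(x \star y_k),
\]
and by \eqref{eqn:fusion} the summand $p_l(x \star y_k)$ vanishes unless $\hat U^l$ is a subrepresentation of $\hat U^n \otop \hat U^k$, i.e. unless $|n - k| \le l \le n+k$ and $l \equiv n+k \pmod 2$.

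The key combinatorial observation is a double-counting estimate. For fixed $n$ and $l$, the set $K_l = \{k \in \N_0 : \hat U^l \subseteq \hat U^n \otop \hat U^k\}$ consists of those $k$ in the interval $[\,|n-l|, n+l\,]$ of the correct parity, whence $|K_l| \le \min(n,l) + 1 \le n+1$. Symmetrically, for fixed $n$ and $k$, the fusion rule $\hat U^n \otop \hat U^k \cong \bigoplus_{r=0}^{\min(n,k)} \hat U^{n+k-2r}$ shows that $\hat U^l \subseteq \hat U^n \otop \hat U^k$ for at most $\min(n,k)+1 \le n+1$ values of $l$. These two bounds are exactly what produce the linear factor $(n+1)$.

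Now I would estimate each block: by the triangle inequality in $\mc S_q(H_l)$ followed by Lemma \ref{lem:block_inequality},
\[
\|p_l(\alpha_q(\omega_x)(y))\|_q \le \sum_{k \in K_l} \|p_l(x \star y_k)\|_q \le D_N \|x\|_q \sum_{k \in K_l} \|y_k\|_q,
\]
and Hölder's inequality applied to the sum of $|K_l| \le n+1$ terms gives $\sum_{k \in K_l} \|y_k\|_q \le (n+1)^{1 - 1/q} \big(\sum_{k \in K_l} \|y_k\|_q^q\big)^{1/q}$. Raising to the $q$-th power and summing over $l$, using that $L_q(\F O_F)$ is the $\ell_q$-direct sum of its blocks and that each index $k$ belongs to at most $n+1$ of the sets $K_l$, I obtain
\[
\|\alpha_q(\omega_x)(y)\|_q^q = \sum_l \|p_l(\alpha_q(\omega_x)(y))\|_q^q \le D_N^q \|x\|_q^q (n+1)^{q-1} \sum_l \sum_{k \in K_l} \|y_k\|_q^q \le D_N^q (n+1)^q \|x\|_q^q \|y\|_q^q,
\]
and taking $q$-th roots yields the claimed bound with the same constant $D_N$ as in Lemma \ref{lem:block_inequality}.

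The substantive analytic content has already been isolated in Lemma \ref{lem:block_inequality}, so the main obstacle here is purely organizational: one must arrange the two applications of the counting bound so that the factor $(n+1)^{1-1/q}$ coming from Hölder and the factor $(n+1)^{1/q}$ coming from the double-counting over $l$ combine to give exactly $(n+1)$, uniformly in $q \in [1,2]$. Care is also needed at the endpoints (at $q=1$ the Hölder step is trivial and the whole factor arises from the $l$-summation, while at $q=2$ it is split evenly), and to justify the reduction to finitely supported $y$ so that all the manipulations with $\star$ are literal rather than merely interpolated.
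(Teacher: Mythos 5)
Your proposal is correct and follows essentially the same route as the paper's proof: decompose $y$ into its blocks $y_k$, apply the blockwise estimate of Lemma \ref{lem:block_inequality} together with the triangle inequality on each $p_l$-component, use H\"older on the inner sum over $\{k : l \subset n \otimes k\}$, and then double-count using the fact that the fusion rules bound both $|\{k : l \subset n \otimes k\}|$ and $|\{l : l \subset n \otimes k\}|$ by $n+1$, so that the factors $(n+1)^{q-1}$ and $(n+1)$ combine to $(n+1)^q$. The only cosmetic difference is your explicit reduction to finitely supported $y$, which the paper handles implicitly by working directly with general $y \in L_q(\F O_F)$.
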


\begin{proof}
Given $n,k,l \in \N_0$, we use the notation $l \subset  n \otimes k$ to mean that $H_l$ is a subrepresentation of $H_n \otimes H_k$.  Now fix $x \in \mc B(H_n)$ and $y = \sum_{k \in \N_0} y_k \in L_q(\F O_F)$, where $y_k = p_k y \in \mc B(H_k)$.  By Lemma \ref{lem:block_inequality}, we have
\begin{align*}
\|\alpha_q(\omega_x)(y)\|_q^q &= \sum_{l \in \N_0} \|p_l\alpha_q(\omega_x)(y)\|_q^q  = \sum_{l \in \N_0} \Big\|\sum_{k: l \subset n \otimes k} p_l(\alpha_q(\omega_x)(y_k))\Big\|_q^q \\
&\le D_N^q \|x\|_q^q \sum_{l \in \N_0}\Big(\sum_{k: l \subset n \otimes k} \|y_k\|_q\Big)^q.
\end{align*}
Now let  $2 \le p < \infty$ be the conjugate exponent to $q$.  Applying H\"older's inequality to the internal sum above we then have 
\begin{align*}
\sum_{l \in \N_0}\Big(\sum_{k: l \subset n \otimes k} \|y_k\|_q\Big)^q &\le \sum_{l \in \N_0}\Big(\sum_{k: \ l \subset n \otimes k} \|y_k\|_q^q\Big) \Big(\sum_{k: \ l \subset n \otimes k} 1 \Big)^{\frac{q}{p}} \\
&\le \sup_{l \in \N_0}  \Big(\sum_{k: \ l \subset n \otimes k} 1 \Big)^{\frac{q}{p}}  \sum_{l \in \N_0}\Big(\sum_{k: \ l \subset n \otimes k} \|y_k\|_q^q\Big) \\
&=  \sup_{l \in \N_0} \Big(\sum_{k: \ l \subset n \otimes k} 1 \Big)^{\frac{q}{p}} \sum_{k \in \N_0}\|y_k\|_q^q \Big(\sum_{l: \ l \subset n \otimes k} 1\Big) \\
&\le  \sup_{l \in \N_0}  \Big(\sum_{k: \ l \subset n \otimes k} 1 \Big)^{\frac{q}{p}}  \sup_{k \in \N_0}\Big(\sum_{l: \ l 
\subset n \otimes k} 1\Big) \|y\|_q^q  .
\end{align*}
Since the fusion rules for $\F O_F$ force both of the sets $\{k: l \subset n \otimes k \}$ and $\{l: l \subset n \otimes k \}$ to have cardinality at most $n +1$, we obtain
\[\|\alpha_q(\omega_x)y\|_q^q \le D_N^q(n+1)^{\frac{q}{p} + 1}\|x\|_q^q \|y\|_q^q.\] The proposition now follows.  
\end{proof}

We now combine Proposition \ref{prop:qHaagerup} with Lemma \ref{lem:Kac_result} to obtain a corresponding Haagerup-type inequality for $C^*_p(\F O_F)$.

\begin{prop} \label{prop:Lqestimate_operatornorm}
Let $F\in \mc U_N$ with $F\bar F = \pm 1$.  Then there is a constant $D_N > 0$ depending only on $N$ such that for each $2 \le p < \infty$ and each  $n \in \N_0$, 
\[\|\omega_x\|_{C^*_p(\F O_F)} \le D_N(n+1)\|x\|_q \qquad (x \in \mc B(H_n)),\] where $1 <q \le 2$ is the conjugate exponent to $p$. 
\end{prop}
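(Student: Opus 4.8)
The plan is to combine the operator-norm estimate of Lemma \ref{lem:Kac_result} with the $L_q$-property of rapid decay (Proposition \ref{prop:qHaagerup}), using the left action $\alpha_q$ to control the $L_q$-norm of iterated convolutions. First I would observe that the inequality
\[\|\pi(\omega)\| \le \liminf_{k\to\infty}\|(\omega^\sharp \star \omega)^{\star 2k}\|_q^{1/4k}\]
holds for \emph{every} $L_p$-representation $\pi$, not merely the cyclic ones: the cyclicity hypothesis in Lemma \ref{lem:Kac_result} is used only to guarantee that $\pi$ is an $L_p$-representation, whereas the estimate itself comes from the norm formula \eqref{eqn:Haagerup} applied to any dense subspace $H_0$ on which $\varphi^\pi_{\eta,\eta} \in L_p(\F O_F)$, followed by H\"older's inequality. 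Since the right-hand side is independent of $\pi$, taking the supremum over all $L_p$-representations (equivalently, applying this to the faithful $L_p$-representation of Proposition \ref{equiv2}) gives
\[\|\omega_x\|_{C^*_p(\F O_F)} \le \liminf_{k\to\infty}\|(\omega_x^\sharp \star \omega_x)^{\star 2k}\|_q^{1/4k}.\]

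Next I would estimate the right-hand side for $x \in \mc B(H_n)$. Since $\F O_F$ is unimodular, the involution on $L_1 = L_1^\sharp$ sends $\omega_x$ to $\omega_{x'}$ with $x' = S(x^*)$, and $x \mapsto S(x^*)$ is an $L_q$-isometry preserving the block $n$ (as $\bar n = n$), so $x' \in \mc B(H_n)$ with $\|x'\|_q = \|x\|_q$. Expanding $(\omega_x^\sharp \star \omega_x)^{\star 2k}$ as a convolution of $4k$ factors, each equal to $\omega_x^\sharp$ or $\omega_x$ and hence supported on block $n$, all partial convolutions remain in $C_c(\F O_F) \subseteq L_q(\F O_F)$, so no convergence issues arise. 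Writing the $4k$-fold convolution as $\alpha_q$ applied iteratively (using the compatibility of the interpolated action with convolution from Lemma \ref{lem:compatibility}), I obtain
\[\|(\omega_x^\sharp \star \omega_x)^{\star 2k}\|_q \le \Big(\prod_{i=1}^{4k-1}\|\alpha_q(\omega_i)\|_{\mc B(L_q(\F O_F))}\Big)\|x\|_q,\]
where each $\omega_i \in \{\omega_x, \omega_x^\sharp\}$ is supported on block $n$. Proposition \ref{prop:qHaagerup} bounds each operator norm by $D_N(n+1)\|x\|_q$, giving $\|(\omega_x^\sharp \star \omega_x)^{\star 2k}\|_q \le (D_N(n+1))^{4k-1}\|x\|_q^{4k}$.

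Finally, taking the $\tfrac{1}{4k}$-th power yields $\|(\omega_x^\sharp \star \omega_x)^{\star 2k}\|_q^{1/4k} \le (D_N(n+1))^{(4k-1)/4k}\|x\|_q$, and letting $k \to \infty$ (so that $(4k-1)/4k \to 1$) gives the claimed bound. I expect the only genuinely delicate point to be the first step, namely checking that the estimate of Lemma \ref{lem:Kac_result} is uniform over all $L_p$-representations and therefore controls the entire $C^*_p$-norm; the remaining steps are bookkeeping around the submultiplicativity of $\alpha_q$ under convolution and the factor $n+1$ supplied by the fusion rules of $\F O_F$.
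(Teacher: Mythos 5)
Your proposal is correct and follows essentially the same route as the paper: reduce to the Haagerup-type estimate $\|\pi(\omega_x)\|\le\liminf_k\|(\omega_x^\sharp\star\omega_x)^{\star 2k}\|_q^{1/4k}$, identify $\omega_x^\sharp=\omega_{S(x^*)}$ with $S(x^*)\in\mc B(H_n)$ of the same $L_q$-norm, and bound the iterated convolution by $(D_N(n+1))^{4k-1}\|x\|_q^{4k}$ via Proposition \ref{prop:qHaagerup}. The only (harmless) cosmetic difference is that the paper passes to the cyclic subrepresentation generated by a unit vector of $H_0$ so as to quote Lemma \ref{lem:Kac_result} verbatim, whereas you observe directly that the second estimate in that lemma holds for any $L_p$-representation; both are valid.
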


\begin{proof}
Fix $n \in \N_0$ and $x \in\mc B(H_n)$.  Let $\pi:L_1(\F O_F) \to \mc B(H)$ be an $L_p$-representation with canonical dense subspace $H_0$.  By density, it suffices to show that \[\|\pi(\omega_x)\xi\| \le D_N (n+1)\|x\|_q\] for each unit vector $\xi \in H_0$.  Fix  such a $\xi \in H_0$ and consider the cyclic representation $(\sigma, K, \xi)$ given by $\sigma = \pi|_K$ and $K = \overline{\pi(L_1(\F O_F))\xi}$.  Since the coefficient function $\varphi^\sigma_{\xi,\xi}$ belongs to $L_p(\G)$, Lemma \ref{lem:Kac_result} implies that $\sigma$ is an $L_p$-representation and
\[
\|\pi(\omega_x)\xi\| \le \|\sigma(\omega_x)\| \le \liminf_{k \to \infty} \|(\omega_x^\sharp \star \omega_x)^{\star 2k}\|_q^{\frac{1}{4k}}. 
\]
 On the other hand, from the definition of the left action $\alpha_q:L_1(\F O_F) \to \mc B(L_q(\F O_F))$, \begin{align*}
\|(\omega_x^\sharp \star \omega_x)^{\star 2k}\|_q &=  \|(\alpha_q(\omega_x^\sharp)\alpha_q(\omega_x))^{2k-2}\alpha_q(\omega_x^\sharp)(x)\|_{q} \\
& \le \|\alpha_q(\omega_x^\sharp)\|_{\mc B(L_q(\F O_F))}^{2k}\|\alpha_q(\omega_x)\|_{\mc B(L_q(\F O_F))}^{2k-1}\|x\|_q. 
\end{align*}  To bound the right hand side of the above inequality, observe that $\omega_x^\sharp = \omega_{S(x^*)}$, where $S$ is the antipode of $\F O_F$.  Indeed, for each $y \in L_\infty(\F O_F)$ we have \[\langle \omega_x^\sharp, y \rangle = \overline{h(S(y)^*x)} = h(x^*S(y)) = h(yS(x^*)) = \langle \omega_{S(x^*)}, y \rangle.\]  Moreover, we have $S(x^*) \in \mc B(H_n)$ and  $\|S(x^*)\|_q = \|x\|_q$.  Therefore we may apply Proposition \ref{prop:qHaagerup} to get \[ \|\alpha_q(\omega_x^\sharp)\|_{\mc B(L_q(\F O_F))}^{2k}\|\alpha_q(\omega_x)\|_{\mc B(L_q(\F O_F))}^{2k-1}\|x\|_q \le (D_N(n+1))^{4k-1}\|x\|_q^{4k}.\]  The proposition now follows from these inequalities.  
\end{proof}

The following is an immediate consequence of Proposition \ref{prop:Lqestimate_operatornorm} and H\"older's inequality. For each non-zero $x \in C_c(\F O_F)$, we define $n(x):=\max\{n \in \N_0: p_n x \ne 0\}$.

\begin{cor} \label{cor:Lqestimate_operatornorm}
For each $2 \le p < \infty$, we have   
\[\|\omega_x\|_{C^*_p(\F O_F)} \le D_N (n(x)+1)^{1+1/p}\|x\|_q \qquad (x \in C_c(\F O_F)),\]
where $1 <q \le 2$ is the conjugate exponent to $p$. 
\end{cor}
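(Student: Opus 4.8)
The plan is to reduce everything to the single-block estimate of Proposition \ref{prop:Lqestimate_operatornorm} and then sum over blocks using H\"older's inequality. First I would decompose a given $x \in C_c(\F O_F)$ into its homogeneous components: writing $x_n = p_n x \in \mc B(H_n)$, we have $x = \sum_{n=0}^{n(x)} x_n$ and hence $\omega_x = \sum_{n=0}^{n(x)} \omega_{x_n}$ in $L_1(\F O_F)$. Since $\|\cdot\|_{C^*_p(\F O_F)}$ is a C$^\ast$-norm, the triangle inequality gives
\[\|\omega_x\|_{C^*_p(\F O_F)} \le \sum_{n=0}^{n(x)} \|\omega_{x_n}\|_{C^*_p(\F O_F)}.\]

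Next I would apply Proposition \ref{prop:Lqestimate_operatornorm} to each summand, which is legitimate precisely because each $x_n$ lives in a single block $\mc B(H_n)$. This yields
\[\|\omega_x\|_{C^*_p(\F O_F)} \le D_N \sum_{n=0}^{n(x)} (n+1)\|x_n\|_q.\]
To control the latter sum, I would invoke H\"older's inequality for finite sequences with the conjugate exponents $p$ and $q$, taking $a_n = n+1$ and $b_n = \|x_n\|_q$, so that
\[\sum_{n=0}^{n(x)} (n+1)\|x_n\|_q \le \Big(\sum_{n=0}^{n(x)} (n+1)^p\Big)^{1/p}\Big(\sum_{n=0}^{n(x)} \|x_n\|_q^q\Big)^{1/q}.\]

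Finally, I would simplify the two factors. The second factor equals $\|x\|_q$ exactly, because the non-commutative $L_q$-norm on $L_q(\F O_F)$ is the $\ell_q$-direct sum of the block norms, so $\sum_{n=0}^{n(x)} \|x_n\|_q^q = \|x\|_q^q$. For the first factor, there are only $n(x)+1$ nonzero terms, each bounded by $(n(x)+1)^p$, whence $\sum_{n=0}^{n(x)} (n+1)^p \le (n(x)+1)^{p+1}$ and the first factor is at most $(n(x)+1)^{1+1/p}$. Combining these estimates produces the claimed bound $\|\omega_x\|_{C^*_p(\F O_F)} \le D_N (n(x)+1)^{1+1/p}\|x\|_q$. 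There is no genuine obstacle in this argument; the only points demanding a moment's care are matching the H\"older exponents to the conjugate pair $(p,q)$ and using the $\ell_q$-direct-sum structure of $L_q(\F O_F)$ to identify the second factor with $\|x\|_q$.
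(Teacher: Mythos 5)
Your proposal is correct and follows exactly the same route as the paper's proof: decompose $x$ into its homogeneous blocks, apply the triangle inequality and Proposition \ref{prop:Lqestimate_operatornorm} blockwise, and then use H\"older's inequality with the conjugate pair $(p,q)$ together with the $\ell_q$-direct-sum structure of $L_q(\F O_F)$. No discrepancies to report.
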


\begin{proof}
Let $x \in C_c(\F O_F)$ and write $x = \sum_{n=0}^{n(x)}p_nx$.  By the triangle inequality, Proposition \ref{prop:Lqestimate_operatornorm}, and H\"older's inequality, we obtain
\begin{align*}\|\omega_x\|_{C^*_p(\F O_F)} &\le \sum_{n=0}^{n(x)}\|\omega_{p_nx}\|_{C^*_p(\F O_F)} \le \sum_{n=0}^{n(x)}D_N(n+1)\|p_nx\|_{q} \\
&\le D_N\Big(\sum_{n=0}^{n(x)}(n+1)^p\Big)^{1/p}\|x\|_q \le D_N(n(x)+1)^{1 + 1/p}\|x\|_q.
\end{align*}
\end{proof}

\subsection{Positive definite functions associated with $C^*_p(\F O_F)$}

In this section we use our Haagerup inequality for $C^*_p(\F O_F)$ (Proposition \ref{prop:qHaagerup}) to give a characterization of the positive definite functions on a unimodular orthogonal free quantum group $\F O_F$ that extend to states on $C^*_p(\F O_F)$ in terms of a weak $L_p$-condition relative to a natural length function $\ell$ defined on $\Irr(\widehat {\F O_F})$.  We begin by recalling $\ell$ and the natural semigroup that it generates. 

\begin{notat}
Denote by $\ell:\N_0 \cong \Irr(\widehat {\F O_F}) \to [0,\infty)$ the canonical length function.  I.e., $\ell(n) = n$ for each $n \in \N_0$.  For each $F \in \text{GL}_N(\C)$ with $F \bar F = \pm 1$, we define a multiplicative semigroup of central elements $(r^\ell)_{r \in (0,1)} \subset C_0(\F O_F)$ by setting
\[r^\ell = \prod_{n \in \N_0} r^n 1_{\mc B(H_n)} \in C_0(\F O_F).\]  At times, we will also regard $\ell$ as the unbounded operator $\ell = \prod_{n \in \N_0} n 1_{\mc B(H_n)}$
affiliated to $L_\infty(\F O_F)$ which generates the semigroup $r^\ell$.   
\end{notat}

\begin{rem} \label{rem:pd}
The semigroup $(r^\ell)_{r \in (0,1)} \in C_0(\F O_F)$ can be regarded as an analogue of the Poisson semigroup of positive definite functions on the free group $\F_N$ (see \cite{Ha}, Lemma 1.2).
Note, however, that unlike in the free group case, $r^\ell$ fails to be a positive definite function on $\F O_F$ (see Section 1 of \cite{Ve}).

For the purposes of the following theorem, we will need to use the fact that the quantum groups $\F O_F$ have the Haagerup property.  More precisely, we require an approximate identity  $(\varphi_r)_{r \in (0,1)} \subset C_0(\F O_F)$ consisting of positive definite functions which has the same (exponential) decay rate as the semigroup $(r^\ell)_{r \in (0,1)}$.  This can be done as follows.  Let $(S_n)_{n \in \N_0}$ be the type 2 Chebyschev polynomials defined in Section \ref{section:prelims_orthogonal}, and for each $r \in (0,1)$, define 
\[
\varphi_{r} = \prod_{n \in \N_0} \frac{S_n(rN)}{S_n(N)}1_{\mc B(H_n)} \in L_\infty(\F O_F).
\] Let $2 <t_0 < 3$ be fixed.  Using the formula $S_n(x) = \frac{\rho(x)^n(1-\rho(x)^{-2n-2})}{1- \rho(x)^{-2}}$ where $\rho(x) = \frac{x}{2}\Big(1 + \sqrt{1 -4x^{-2}}\Big)$ and $x \ge 2$, one can easily find constants $0 < C_1 < C_2$ depending only on $N$ and $t_0$ such that \[C_1r^n \le \frac{S_n(rN)}{S_n(N)} \le C_2r^n \qquad (t_0N^{-1} \le r < 1). \]
Since $\lim_{r \to 1}\frac{S_n(rN)}{S_n(N)} =1$ for each $n \in \N_0$, this shows that  $(\varphi_r)_{r \in (0,1)} \subset C_0(\F O_F)$ is an approximate unit with the required exponential decay. To see that $\varphi_r$ is positive definite, we appeal to  \cite[Proposition 4.4]{BrAP} and \cite[Theorem 17]{DeFrYa}, where it is shown that each $\varphi_r$ implements a unital completely positive (central) multiplier of $L_1(O^+_F)$.  I.e., there exists a completely bounded map $L_* \in \mc {CB}(L_1(O^+_F))$ given by \[\hat \lambda (L_* \hat \omega) = \varphi_r \hat \lambda (\hat \omega) \qquad (\hat \omega \in L_1(O^+_F)),\] whose adjoint $L = (L_*)^*$ is a $\sigma$-weakly continuous unital completely positive map on $L_\infty(O^+_F)$.  (This was proved for $F = 1_N$ in \cite{BrAP} and the general case can be deduced from \cite{DeFrYa}.)  To conclude, we just note that the complete positivity of $L$ is equivalent to the positive definiteness of  $\varphi_r^* = \varphi_r$ by Theorem 6.4 of \cite{Da11}. See also Remark \ref{rem:pd}.   
\end{rem}

We now arrive at our characterization of the positive definite functions on $\F O_F$ which extend to states on $C^*_p(\F O_F)$.  

\begin{thm}\label{thm:characterization_p_continuity}
Let $F\in \mc U_N$ with $F\bar F = \pm 1$ and let $2 \le p < \infty$.  The following conditions are equivalent for a positive definite function $\varphi \in B(\F O_F)$: 
\begin{enumerate}
\item \label{FS} $\varphi \in B_{L_p(\F O_F)}(\F O_F)$.
\item \label{sup} $\sup_{n \in \N_0} (n+1)^{-1}\|p_n\varphi\|_{p} < \infty$.
\item \label{length} $(1 + \ell)^{-1 - \frac{2}{p}}\varphi \in L_p(\F O_F)$.
\item \label{weaklp} $\varphi$ is weakly $L_p$.  I.e., $r^\ell \varphi \in L_p(\F O_F)$ for each $0 < r < 1$.
\end{enumerate}
\end{thm}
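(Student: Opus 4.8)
The plan is to prove the four conditions equivalent via the cycle $(1) \Rightarrow (2) \Rightarrow (3) \Rightarrow (4) \Rightarrow (1)$. Throughout I identify $\varphi \in C_b(\F O_F) = L_\infty(\F O_F) = \prod_n \mc B(H_n)$ with the sequence of its blocks $p_n\varphi \in \mc B(H_n)$, so that each condition becomes a growth or summability statement about the numbers $\|p_n\varphi\|_p = (d_n\Tr_n(|p_n\varphi|^p))^{1/p}$. Only the last implication $(4)\Rightarrow(1)$ is substantial; the middle two are elementary and the first is a duality estimate built on the $L_q$-property of rapid decay.

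For $(1) \Rightarrow (2)$, write $\varphi = \varphi^\pi_{\xi,\eta}$ for a $C^*_p(\F O_F)$-continuous representation $\pi$. For each $n$ and each $x \in \mc B(H_n)$ with $\|x\|_q \le 1$, where $q$ is the conjugate exponent of $p$, the pairing $\langle \omega_x, \varphi\rangle = \langle \pi(\omega_x)\xi \mid \eta\rangle$ is bounded by $\|\omega_x\|_{C^*_p(\F O_F)}\|\xi\|\|\eta\|$, which by the Haagerup-type estimate of Proposition \ref{prop:Lqestimate_operatornorm} is at most $D_N(n+1)\|x\|_q\|\xi\|\|\eta\|$. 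Since $\|p_n\varphi\|_p = \sup\{|h(\varphi x)| : x \in \mc B(H_n),\ \|x\|_q \le 1\}$ by $L_p$--$L_q$ duality on the finite-dimensional block $\mc B(H_n)$, I obtain $\|p_n\varphi\|_p \le D_N(n+1)\|\xi\|\|\eta\|$, which is exactly $(2)$. The implications $(2) \Rightarrow (3)$ and $(3)\Rightarrow(4)$ are then immediate: if $\|p_n\varphi\|_p \le C(n+1)$ then $\sum_n (1+n)^{-(p+2)}\|p_n\varphi\|_p^p \le C^p\sum_n (1+n)^{-2} < \infty$, i.e. $(1+\ell)^{-1-2/p}\varphi \in L_p(\F O_F)$; and $(3)$ forces $\|p_n\varphi\|_p \le C'(1+n)^{(p+2)/p}$, whence $\sum_n r^{pn}\|p_n\varphi\|_p^p < \infty$ for every $0 < r < 1$, i.e. $r^\ell\varphi \in L_p(\F O_F)$.

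The heart of the matter is $(4) \Rightarrow (1)$, where I would use the Haagerup property of $\F O_F$ in the form recorded before the theorem: the approximate identity $(\varphi_r)_{r \in (0,1)} \subset C_0(\F O_F)$ of positive definite functions with $\frac{S_n(rN)}{S_n(N)} \le C_2 r^n$ for $r$ near $1$. For such $r$, the product $\varphi_r\varphi$ is positive definite, being a coefficient of the tensor product representation by Lemma \ref{lem:products_coefficients}, and since $p_n(\varphi_r\varphi) = \tfrac{S_n(rN)}{S_n(N)} p_n\varphi$ the weak-$L_p$ hypothesis $(4)$ yields $\|\varphi_r\varphi\|_p^p \le C_2^p \sum_n r^{pn}\|p_n\varphi\|_p^p < \infty$, so $\varphi_r\varphi \in L_p(\F O_F)$. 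Applying Lemma \ref{lem:Kac_result} to the GNS representation of the positive definite $L_p$-function $\varphi_r\varphi$ shows that representation is an $L_p$-representation, hence $\varphi_r\varphi \in B_{L_p(\F O_F)}(\F O_F) = C^*_p(\F O_F)^*$. As $\varphi_r\varphi$ is positive definite, its norm in $B_{L_p(\F O_F)}(\F O_F)$ equals its value at the co-unit, namely $p_0\varphi$ since $p_0\varphi_r = 1$, so the net $(\varphi_r\varphi)_r$ is uniformly bounded and by Banach--Alaoglu has a weak$^*$ cluster point $\psi \in C^*_p(\F O_F)^*$. Finally, for $x \in C_c(\F O_F)$ one has $\langle \omega_x, \varphi_r\varphi\rangle = \tfrac{S_n(rN)}{S_n(N)}\langle\omega_x,\varphi\rangle \to \langle\omega_x,\varphi\rangle$ as $r \to 1$, so $\psi$ agrees with $\varphi$ on the dense subalgebra $\mc A(\F O_F)$; hence $\varphi = \psi \in B_{L_p(\F O_F)}(\F O_F)$, which is $(1)$.

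The main obstacle is this last implication, and it rests on two coordinated facts. First, the exponential decay of the Haagerup multipliers $\varphi_r$ must match the scale of the semigroup $r^\ell$, so that the weak-$L_p$ condition is precisely strong enough to push $\varphi_r\varphi$ into $L_p(\F O_F)$; getting this calibration right is where the estimate $\frac{S_n(rN)}{S_n(N)} \le C_2 r^n$ is essential. Second, the passage from membership in $L_p(\F O_F)$ to $C^*_p(\F O_F)$-continuity of the GNS representation is exactly the content of Lemma \ref{lem:Kac_result}, whose proof uses the non-commutative Hölder inequality together with the operator-norm formula $\|A\| = \sup_\eta \lim_k \langle (A^*A)^{2k}\eta\mid\eta\rangle^{1/4k}$; this is where the real analytic work is concentrated. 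The remaining steps---the uniform $B_{L_p}$-bound on the net and the weak$^*$ identification of its limit with $\varphi$---are routine bookkeeping.
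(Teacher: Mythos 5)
Your proof is correct and follows essentially the same route as the paper's: the implication \eqref{FS}$\implies$\eqref{sup} rests on Proposition \ref{prop:Lqestimate_operatornorm} (the paper tests against the extremal element $x = p_n|\varphi|^{p-2}\varphi^*$ where you invoke block $L_p$--$L_q$ duality, which is the same computation), the middle implications are the same elementary summability estimates, and \eqref{weaklp}$\implies$\eqref{FS} proceeds exactly as in the paper by multiplying by the Haagerup multipliers $\varphi_r$ of Remark \ref{rem:pd}, applying Lemma \ref{lem:Kac_result} to place $\varphi_r\varphi$ in $A_{L_p(\F O_F)}(\F O_F)$, and recovering $\varphi$ as a weak$^*$ limit in $B_{L_p(\F O_F)}(\F O_F) \cong C^*_p(\F O_F)^*$ using the uniform bound coming from positive definiteness.
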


\begin{proof} In the following, we assume without loss of generality that $\|\varphi\|_{B(\F O_F)} = 1$. 

\eqref{FS} $\implies$ \eqref{sup}.  Fix $n \in \N_0$ and let $x = p_n|\varphi|^{p-2}\varphi^* \in C_c(\F O_F)$.  Then we have 
\[\|p_n\varphi\|_{p}^p = h(p_n|\varphi|^p) = h(\varphi p_n|\varphi|^{p-2}\varphi^*) = h(\varphi x) = \langle \omega_x,\varphi \rangle.\]
In particular, by Proposition \ref{prop:Lqestimate_operatornorm}, there is a constant $D_N > 0$ (independent of $n \in \N_0$) such that 
\begin{align*}\|p_n\varphi\|_{p}^p&= \langle \omega_x,\varphi \rangle \le \|\omega_x\|_{C^*_p(\F O_F)} \le D_N(n+1)\|x\|_q \qquad (\text{where }p^{-1} + q^{-1} = 1)  \\
&=D_N(n+1) h(|x|^{\frac{p}{p-1}})^{\frac{p-1}{p}} = D_N(n+1) h\big(\big| p_n|\varphi|^{p-2}\varphi^*\big|^{\frac{p}{p-1}}\big)^{\frac{p-1}{p}} \\
&= D_N(n+1) h(p_n(|\varphi|^{p-1})^{\frac{p}{p-1}})^{\frac{p-1}{p}}\\
&=D_N(n+1) \|p_n\varphi\|_p^{p-1}.
\end{align*}
From these inequalities, we obtain $(n+1)^{-1}\|p_n\varphi\|_p \le D_N$.

\eqref{sup} $\implies$ \eqref{length}.  This implication follows from the inequality
\begin{align*}
\|(1 + \ell)^{-1 - \frac{2}{p}}\varphi\|_p^p &= \sum_{n \in \N_0} (1+n)^{-p-2} \|p_n\varphi\|_p^p\\
&\le \sup_{n \in \N_0}\Big\{(n+1)^{-1}\|p_n \varphi\|_p\Big\}^p \Big(\sum_{n \in \N_0} (n+1)^{-2}\Big). 
\end{align*}

\eqref{length} $\implies$ \eqref{weaklp}. For each $r \in (0,1)$, one can find a constant $C_r >0$ such that $r^{pn} \le C_r(n+1)^{-p-2}$ for all $n \in \N_0$.  From this, one easily sees that $\|r^\ell \varphi\|_p^p \le C_r\|(1 + \ell)^{-1 - \frac{2}{p}}\varphi\|_p^p.$ 

\eqref{weaklp} $\implies$ \eqref{FS}.  For each $r \in (0,1)$, let $\varphi_r \in B(\F O_F)$ be the positive definite function defined in Remark \ref{rem:pd}.  In particular, for $r > t_0N^{-1}$,  $0 \le \varphi_r \le C_2r^{\ell}$, which by \eqref{weaklp} and Lemma \ref{lem:Kac_result} implies that the norm one positive definite function $\varphi_r\varphi$ belongs to $L_p(\F O_F) \cap A_{L_p(\F O_F)}(\F O_F)$.  Since $\lim_{r \to 1} p_n\varphi_r\varphi = p_n \varphi$ for each $n \in \N_0$, it follows by linearity that  
\begin{align*}
|\langle \omega_x, \varphi \rangle| = \lim_{r \to 1}|\langle \omega_x, \varphi_r \varphi \rangle|\le \|\omega_x\|_{C^*_p(\F O_F)} \qquad (x \in C_c(\F O_F)),
\end{align*} 
In particular,  $\varphi \in B_{L_p(\F O_F)}(\F O_F)$.
\end{proof}

\subsection{Proof that $C^*_p(\F O_F)$ is exotic for $2 < p < \infty$}

Before proceeding to the proof of Theorem \ref{thm:free_orthogonal}, we need the following lemma.  Recall that $(S_n)_{n \in \N_0}$ are the Chebyschev polynomials defined in Section \ref{section:prelims_orthogonal}.  We also remind the reader that $\dim(H_n) = S_n(N)$, where $H_n$ is the irreducible unitary representation of $O^+_F$ with label $n$.

\begin{lem} \label{lem:ratiotest}
Let  $p \in (1,\infty)$, $N \ge 3$ and $\rho>1$ be such that $\rho + \rho^{-1} = N$.  Then the series \[\sum_{n \in \N_0} r^{pn}S_n(N)^2  \qquad (0 < r < 1)\] 
converges if $r < \rho^{-2/p}$ and diverges if $r > \rho^{-2/p}$.
\end{lem}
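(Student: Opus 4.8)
The plan is to extract the exponential growth rate of $S_n(N)$ from the explicit formula recorded in Section \ref{section:prelims_orthogonal} and then to compare the series term-by-term with a geometric series via the root (or ratio) test. Recall from that section the identity $S_n(N) = d_n = \frac{\rho^{n+1} - \rho^{-n-1}}{\rho - \rho^{-1}}$. Since $\rho > 1$, the term $\rho^{-n-1}$ decays exponentially, so we may write
\[
S_n(N) = \frac{\rho}{\rho - \rho^{-1}}\,\rho^n\bigl(1 - \rho^{-2n-2}\bigr),
\]
which gives the asymptotic $S_n(N) \sim \frac{\rho}{\rho - \rho^{-1}}\,\rho^n$ as $n \to \infty$. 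In particular $S_n(N)^{1/n} \to \rho$, and hence for the nonnegative terms $a_n := r^{pn}S_n(N)^2$ of the series we have $a_n^{1/n} \to r^p\rho^2$.

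First I would apply the root test. Since the limit $\lim_n a_n^{1/n} = r^p\rho^2$ exists, the series $\sum_n a_n$ converges when $r^p\rho^2 < 1$ and diverges when $r^p\rho^2 > 1$; in the latter case the asymptotic $a_n \sim \bigl(\tfrac{\rho}{\rho-\rho^{-1}}\bigr)^2 (r^p\rho^2)^n$ shows that the general term fails to tend to $0$, so divergence is automatic. Because the map $t \mapsto t^{1/p}$ is strictly increasing on $(0,\infty)$, the inequality $r^p\rho^2 < 1$ is equivalent to $r^p < \rho^{-2}$, i.e.\ $r < \rho^{-2/p}$, while $r^p\rho^2 > 1$ is equivalent to $r > \rho^{-2/p}$. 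This yields exactly the claimed dichotomy.

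As an equivalent route one could run the ratio test instead, using
\[
\frac{S_{n+1}(N)}{S_n(N)} = \rho\cdot\frac{1 - \rho^{-2n-4}}{1 - \rho^{-2n-2}} \longrightarrow \rho,
\]
so that $a_{n+1}/a_n \to r^p\rho^2$ and the same threshold emerges. I do not expect any genuine obstacle here: the argument is a direct comparison with a geometric series, and the only borderline point is $r = \rho^{-2/p}$, where $r^p\rho^2 = 1$ and the root/ratio tests are inconclusive; since the statement asserts nothing about that value, no further analysis is needed.
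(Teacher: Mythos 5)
Your proposal is correct and follows essentially the same route as the paper: the paper applies the ratio test to $a_n = r^{pn}S_n(N)^2$ using the same factorization $S_n(N) = \rho^n\bigl(\tfrac{1-\rho^{-2n-2}}{1-\rho^{-2}}\bigr)$ and obtains the identical threshold $L = r^p\rho^2$. Your primary use of the root test versus the paper's ratio test is an immaterial difference, and your handling of the endpoint and the monotonicity of $t \mapsto t^{1/p}$ is fine.
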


\begin{proof}
By the ratio test, this series converges if $L:= \limsup_{n}\frac{r^pS_{n+1}(N)^2}{S_n(N)^2} < 1$ and diverges if $L > 1$.  Since
$S_n(N) = \rho^{n}\Big(\frac{1- \rho^{-2n-2}}{1-\rho^{-2}}\Big)$,  we obtain
\begin{align*}
L  = \limsup_n r^p\rho^{2}\frac{(1-\rho^{-2n-4})^2}{(1-\rho^{-2n-2})^2} = r^p\rho^{2}. 
\end{align*}
\end{proof}

\begin{proof}[Proof of Theorem \ref{thm:free_orthogonal}]
\eqref{surjective} Let $2 \le p < p' < \infty$.  Since the canonical quotient map $C^*_{p'}(\F O_F) \to C^*_p(\F O_F)$ is injective if and only if $B_{L_p(\F O_F)}(\F O_F) = B_{L_{p'}(\F O_F)}(\F O_F)$, it suffices to exhibit a positive definite element $\varphi \in B_{L_{p'}(\F O_F)}(\F O_F)\backslash B_{L_p(\F O_F)}(\F O_F)$.  Consider the net $(\varphi_r)_{r \in (0,1)}$ of positive definite elements of $B(\F O_F)$ introduced in Remark \ref{rem:pd}.  Choose $0 < r_0 < 1$ so that $\rho^{-2/p} < r_0 < \rho^{-2/p'}$, where $\rho>1$ is such that $\rho + \rho^{-1} = N$, and consider the corresponding positive definite element $\varphi_{r_0}$.  Since there are constants $0 <C_1 <C_2$ such that $C_1r_0^\ell \le \varphi_{r_0} \le C_2r_0^\ell$, we have, for each $r \in (0,1)$,
\[\|r^\ell \varphi_{r_0}\|^{p'}_{p'} < \infty \iff \|(rr_0)^\ell\|^{p'}_{p'} =\sum_{n \in \N_0} (rr_0)^{p'n}S_n(N)^2 < \infty.\]  
Since $r_0 < \rho^{-2/p'}$, $\varphi_{r_0}$ is weakly $L_{p'}$ by Lemma \ref{lem:ratiotest} and $\varphi_{r_0} \in B_{L_{p'}(\F O_F)}(\F O_F)$ by Theorem \ref{thm:characterization_p_continuity}.  On the other hand, since $\rho^{-2/p} < r_0$, there exists an $r_1 \in (0,1)$ such that $\rho^{-2/p}  < r_1r_0$, and therefore $\|r_1^\ell \varphi_{r_0}\| \ge C_1\|(r_1r_0)^\ell\|_{p} = \infty$ by Lemma \ref{lem:ratiotest}.  In particular, $\varphi_{r_0} \notin B_{L_p(\F O_F)}(\F O_F)$ by Theorem \ref{thm:characterization_p_continuity}.

\eqref{non_iso}.  Fix $2 < p < \infty$.  To show that $C^*_{p}(\F O_F) \ncong C_u(O^+_F)$, it suffices to show that $C^*_p(\F O_F)$ does not have any non-trivial one-dimensional representations (characters).  Suppose, to reach a contradiction, that $\pi:C^*_p(\F O_F) \to \C = \mc B(\C)$ is a character and let $U_\pi \in \mc U(L_\infty(\F O_F))$ be the unitary element implementing $\pi$.  Since $\pi$ is one-dimensional, $U_\pi = \varphi^\pi_{1,1}$ is also a positive definite coefficient function in $B_{L_p(\F O_F)}(\F O_F)$ and  Theorem \ref{thm:characterization_p_continuity} \eqref{weaklp} implies that \[\|r^\ell U_\pi\|_{p}^p  = \sum_{n \in \N_0} r^{np} \|p_n U_\pi\|_p^p = \sum_{n \in \N_0} r^{np}S_n(N)^2< \infty \qquad (0 < r < 1). \]
This contradicts Lemma \ref{lem:ratiotest}.  The non-isomorphism $C^*_{p}(\F O_F) \ncong C(O^+_F)$ follows from the fact that $C(O^+_F)$ is a simple C$^\ast$-algebra \cite[Theorem 7.2]{VaVe}, while $C^*_{p}(\F O_F)$ is not simple by \eqref{surjective}. 
\end{proof}

\subsection{Uniqueness of trace} \label{section:uniquetrace}

In this section we prove Theorem \ref{thm:uniquetrace}, which states that the Haar state is the unique tracial state on $C^*_p(\F O_F)$ for each $2 \le p < \infty$.  The $p=2$ case was proved in \cite{VaVe} using a quantum analogue of the ``conjugation by generators'' approach to simplicity of reduced free group C$^\ast$-algebras.  In our proof, we develop an $L_p$ version of this quantum conjugation by generators method.

In $\mc B(H_1) = \mc B(\ell_2(N)) \subset L_\infty(\F O_F)$, let $(e_{ij})_{1 \le i,j \le N}$ be a system of matrix units and let $(\omega_{ij})_{1 \le i,j \le N} \subset L_1(\F O_F)$ be the corresponding dual basis (relative to the duality induced by the Haar weight $h$).  Note that  $L_1(\F O_F)$ is generated by $(\omega_{ij})_{1 \le i,j \le N}$ as a Banach $\ast$-algebra and that the matrix $\Omega:=[\omega_{ij}]$ is a unitary element of the $\ast$-algebra $M_N(L_1(\G))$.  This just follows from the fact that $(\lambda \otimes \iota)\Omega = \hat U^* \in M_N(C(O^+_F))$, where $\hat U$ is the fundamental representation of $O^+_F$.  Using the family $(\omega_{ij})_{1 \le i,j \le N}$, we define the following ``conjugation by generators'' map 
\begin{align} \label{eqn:conjugation}
\Phi: L_1(\F O_F) \to L_1(\F O_F); \qquad \Phi(\omega) = \frac{1}{2N} \sum_{i,j =1}^N \big( \omega_{ij}\star \omega \star \omega_{ij}^\sharp + \omega_{ij}^\sharp\star \omega \star \omega_{ij}\big).
\end{align} 

\begin{rem}
Since $\Omega = [\omega_{ij}]$ is unitary, we have $\sum_{i,j} \omega_{ij}\star \omega_{ij}^\sharp = \sum_{i,j} \omega_{ij}^\sharp\star \omega_{ij} = N$ and it follows that $\Phi$ is unital and $\tau = \tau \circ \Phi$ for any tracial state $\tau:L_1(\F O_F) \to \C$.  
\end{rem}    
Since we have $\omega_{ij} = \frac{1}{N}\omega_{e_{ji}}$, it follows that $\|\omega_{ij}\|_1 = \|\omega_{ij}^\sharp\|_1 = 1$ for each $1 \le i,j \le N$, and therefore one obtains the trivial bound $\|\Phi\|_{L_1(\F O_F) \to L_1(\F O_F)} \le N$.  In the following lemma, we show that this bound can be greatly improved.
\begin{lem} \label{lem:L_1contractive}
$\|\Phi\|_{L_1(\F O_F) \to L_1(\F O_F)} = 1$.
\end{lem}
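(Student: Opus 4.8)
The plan is to pass to the Banach adjoint. Since $\Phi$ is bounded on $L_1(\F O_F)$ (the trivial bound gives $\|\Phi\|\le N$), it has a Banach-space adjoint $\Phi^*$ acting on $L_1(\F O_F)^* = L_\infty(\F O_F)$, with $\|\Phi\|_{L_1\to L_1} = \|\Phi^*\|_{L_\infty\to L_\infty}$. I will show that $\Phi^*$ is \emph{unital and completely positive}. Since a unital positive map $T$ on a unital C$^\ast$-algebra satisfies $\|T\| = \|T(1)\| = 1$, this immediately yields $\|\Phi\| = \|\Phi^*\| = 1$. (The lower bound $\|\Phi\|\ge 1$ is in any case transparent from the Remark preceding the statement: $\Phi$ fixes the identity $\omega_{p_0}$ of $L_1(\F O_F)$, which is the positive functional $h(\,\cdot\,p_0)$ of norm $h(p_0) = 1$.)

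To compute $\Phi^*$ I use the duality $\langle \mu\star\omega\star\nu, x\rangle = \langle \omega, \nu\star x\star\mu\rangle$ relating the convolution product on $L_1$ to the bimodule actions $\omega\star x = (\iota\otimes\omega)\Delta(x)$ and $x\star\omega = (\omega\otimes\iota)\Delta(x)$ on $L_\infty$. This gives
\[
\Phi^*(x) = \frac{1}{2N}\sum_{i,j=1}^N\big(\omega_{ij}^\sharp\star x\star\omega_{ij} + \omega_{ij}\star x\star\omega_{ij}^\sharp\big).
\]
By coassociativity $\omega_{ij}^\sharp\star x\star\omega_{ij} = (\omega_{ij}\otimes\iota\otimes\omega_{ij}^\sharp)(\iota\otimes\Delta)\Delta(x)$, so summing over $i,j$,
\[
\sum_{i,j}\omega_{ij}^\sharp\star x\star\omega_{ij} = \sum_{i,j}(\omega_{ij}\otimes\iota\otimes\omega_{ij}^\sharp)(\iota\otimes\Delta)\Delta(x),
\]
with the middle leg as output. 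Now $(\iota\otimes\Delta)\Delta$ is a normal $\ast$-homomorphism, hence completely positive, and slicing the two outer legs by a \emph{positive} functional is completely positive (it is $\iota\otimes\phi$ for a positive $\phi$, up to a leg permutation). Thus each summand of $\Phi^*$ is completely positive provided the functionals $\Theta = \sum_{ij}\omega_{ij}\otimes\omega_{ij}^\sharp$ and $\Theta' = \sum_{ij}\omega_{ij}^\sharp\otimes\omega_{ij}$ on the outer legs are positive. Unitality is then automatic from $(\iota\otimes\Delta)\Delta(1) = 1\otimes 1\otimes 1$, which gives $\Phi^*(1) = \tfrac{1}{2N}(\Theta(1)+\Theta'(1))\,1$, so it suffices to verify $\Theta(1) = \Theta'(1) = N$.

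The crux is therefore the positivity of $\Theta$ (and symmetrically $\Theta'$). Here I use that $\omega_{ij} = \tfrac1N\omega_{e_{ji}}$ is supported on the fundamental block $\mc B(H_1)$, where (since $d_1 = N$) it has $\Tr_1$-density $e_{ji}$, while $\omega_{ij}^\sharp = \tfrac1N\omega_{S(e_{ij})}$ has $\Tr_1$-density $S(e_{ij})$. Hence $\Theta$ is carried by $\mc B(H_1)\otimes\mc B(H_1)$ and is represented, relative to $\Tr_1\otimes\Tr_1$, by the operator
\[
\rho = \sum_{i,j=1}^N e_{ji}\otimes S(e_{ij}) \in \mc B(H_1)\otimes\mc B(H_1).
\]
I expect to identify $\rho$ with the positive rank-one operator $tt^*$ attached to the self-duality morphism $t\colon\C\to H_1\otimes H_1$ of the fundamental representation — which exists precisely because $F\bar F\in\R 1$ forces $\hat U$ to be irreducible and self-conjugate — so that $\rho\ge 0$ and $\Theta(1) = t^*t = N$. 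In the simplest case $F=1$ one has $S(e_{ij}) = e_{ji}$ and $\rho = \sum_{ij}e_{ji}\otimes e_{ji} = \xi\xi^*\ge 0$ with $\xi = \sum_i e_i\otimes e_i$, making positivity explicit. Granting this, $\Phi^*$ is unital completely positive, so $\|\Phi^*\| = 1$ and the lemma follows.

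The step I expect to be the main obstacle is exactly this last one: pinning down the action of the antipode $S$ on the matrix units $e_{ij}$ of the fundamental block and recognizing the resulting density $\rho$ as the (normalized) self-intertwiner of the self-conjugate representation $H_1$. This is where the hypothesis $F\bar F = \pm 1$ genuinely enters. I note that the symmetrization over the two orders in the definition of $\Phi$ is not needed for positivity (each summand is already completely positive), but retaining both terms makes $\Phi^*$ self-adjoint, which will be used in the spectral analysis underlying the uniqueness-of-trace result to follow.
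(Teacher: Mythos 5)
Your argument is correct, but it is genuinely different from the one in the paper. The paper stays on the $L_1$ side and runs an operator-space estimate: it bounds $2N\|\Phi(\omega)\|_1$ by two operator projective tensor norms in $L_1(\F O_F)\widehat\otimes L_1(\F O_F)$, and the whole proof reduces to the fact that $\Omega=[\omega_{ij}]$ has norm $1$ in $M_N(L_1(\F O_F))\cong\mc{CB}^\sigma(L_\infty(\F O_F),M_N(\C))$, because under that identification $\Omega$ is the completely positive compression onto the block $\mc B(H_1)$. You instead dualize and show that $\Phi^*$ is a unital completely positive map on $L_\infty(\F O_F)$; your duality formula, the reduction to positivity of $\Theta=\sum_{i,j}\omega_{ij}\otimes\omega_{ij}^\sharp$ (and $\Theta'$), and the unitality computation are all correct. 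The step you flag as the expected obstacle does go through, and can be closed cleanly without chasing $F$ through the matrix units: since $\omega_{ij}(a)=a_{ij}$ and $\omega_{ij}^\sharp(b)=S(b)_{ji}$ on the block $\mc B(H_1)$, one gets $\Theta(a\otimes b)=\Tr_1(aS(b))$; in the Kac case $S|_{\mc B(H_1)}$ is a $\ast$-anti-automorphism of a matrix algebra, hence of the form $b\mapsto Ub^tU^*$ for a unitary $U$, so $\Theta(z)=\langle(\mathrm{Ad}_{U^*}\otimes\iota)(z)\xi\,|\,\xi\rangle$ with $\xi=\sum_ie_i\otimes e_i$, which is manifestly positive with $\Theta(1)=N$ (this is your self-duality morphism $t$ in disguise, and is indeed where $F\bar F\in\R 1$ enters). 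In the end the two proofs hinge on the same positivity phenomenon attached to $\mc B(H_1)$ — the paper uses complete positivity of the compression onto that block, you use positivity of the associated functional $\Theta$ — but your route buys slightly more: it exhibits $\Phi$ as the pre-adjoint of a self-adjoint unital completely positive map, whereas the paper's estimate only gives (complete) contractivity; the paper's route avoids any explicit discussion of the antipode on $\mc B(H_1)$.
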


\begin{proof}
Since $L_1(\F O_F) = L_\infty(\F O_F)_*$ is a completely contractive Banach algebra, we have 
\[2N\|\Phi(\omega)\| \le \Big\| \sum_{i,j=1}^N \omega_{ij}\star \omega \otimes \omega_{ij}^\sharp\Big\|_{L_1(\F O_F) \widehat{\otimes} L_1(\F O_F)} +\Big\|\sum_{i,j=1}^N \omega_{ij}^\sharp \otimes \omega\star \omega_{ij}\Big\|_{L_1(\F O_F) \widehat{\otimes} L_1(\F O_F)}. \] It therefore suffices to show that the two norms on the right side of the above equation are bounded by $N\|\omega\|_1$ for each $\omega \in L_1(\F O_F)$.
 
Let $(e_i)_{1 \le i \le N}$ be the orthonormal basis corresponding to the matrix units $e_{ij} \in \mc B(H_1)$ and put $a = [e_1^t \ e_2^t \ldots \ e_N^t] \in M_{1,N^2}$.  Then we can write
$\sum_{i,j=1}^N \omega_{ij}\star \omega \otimes \omega_{ij}^\sharp = a\cdot ([\omega_{ij}\star \omega] \otimes [\omega_{ij}^{\sharp}])\cdot a^{t}$, and it follows from the definition of the operator projective tensor product that
\begin{align*}
\Big\| \sum_{i,j=1}^N \omega_{ij}\star \omega \otimes \omega_{ij}^\sharp\Big\|_{L_1(\F O_F) \widehat{\otimes} L_1(\F O_F)} &\le \|a\|\|a^t\|\|[\omega_{ij}\star \omega]\|_{M_N(L_1(\F O_F))}\|[\omega_{ij}^\sharp]\|_{M_N(L_1(\F O_F))} \\
& \le N\|\omega\|_1\|[\omega_{ij}]\|_{M_N(L_1(\F O_F))}^2,
\end{align*}
where in the second inequality we have used $\|a\| = \|a^t\|  = \sqrt{N}$, the complete contractivity of the convolution product $\star$, and the fact that the involution $\sharp$ is a  complete isometry.  

To conclude the proof, recall that we have a completely isometric identification $M_N(L_1(\F O_F)) \cong \mc{CB}^\sigma(L_\infty(\F O_F), M_N(\C))$, where the latter operator space is the collection of all completely bounded $\sigma$-weakly continuous linear maps from $L_\infty(\F O_F)$ to $M_N(\C)$.  This identification is given by  \[[f_{ij}] \in M_N(L_1(\F O_F)) \mapsto \{x \in L_\infty(\F O_F) \mapsto [\langle f_{ij},x \rangle] \in M_N(\C) \}. \] As a consequence, our matrix $\Omega=[\omega_{ij}] \in M_N(L_1(\F O_F))$ corresponds (under this identification) to the completely positive projection map $L_\infty(\F O_F) = \prod_{k \in \N_0} \mc B(H_k) \to \mc B(H_1)  \cong M_N(\C)$.  In particular, $\|[\omega_{ij}]\|_{M_N(L_1(\F O_F))}  = 1$ and therefore \[\Big\| \sum_{i,j=1}^N \omega_{ij}\star \omega \otimes \omega_{ij}^\sharp\Big\|_{L_1(\F O_F) \widehat{\otimes} L_1(\F O_F)} \le N \|\omega\|_1,\] and a similar argument yields $\Big\|\sum_{i,j=1}^N \omega_{ij}^\sharp \otimes \omega\star \omega_{ij}\Big\|_{L_1(\F O_F) \widehat{\otimes} L_1(\F O_F)} \le N\|\omega\|_1$.
\end{proof}

Since the left and right actions of $L_1(\F O_F)$ extend to actions of $L_1(\F O_F)$ on $L_p(\F O_F)$, we can also consider the map $\Phi$ as a bounded linear map on $L_p(\F O_F)$ for each $1 \le p \le \infty$.  In the following proposition, we let $L_{p,0} \subset L_p(\F O_F)$ denote the codimension $1$ subspace given by the $\|\cdot\|_p$-completion of $\bigoplus_{n \ge 1}\mc B(H_n) = C_c(\F O_F) \ominus \C p_0$.  Note that since $\Phi(L_{p,0}) \subseteq L_{p,0}$ for each $p$, we may regard $\Phi$ as a bounded linear map on $L_{p,0}$.  The following norm estimate is crucial for our proof of unique trace for $C^*_p(\F O_F)$. 

\begin{prop} \label{prop:contractive}
For each $1 < q \le 2$, there is a constant $0 <C(q,N) < 1$ such that 
\[\|\Phi\|_{L_{q,0} \to L_{q,0}} \le C(q,N).\] 
\end{prop}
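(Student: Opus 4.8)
The plan is to establish the estimate first at the Hilbert space endpoint $q=2$, where $\Phi$ is self-adjoint, and then obtain the range $1<q<2$ for free by complex interpolation. For the reduction, recall that Lemma \ref{lem:L_1contractive} gives $\|\Phi\|_{L_1(\F O_F)\to L_1(\F O_F)}=1$, and since $\Phi(L_{p,0})\subseteq L_{p,0}$ for every $p$ we get $\|\Phi\|_{L_{1,0}\to L_{1,0}}\le 1$. The map $x\mapsto x-p_0xp_0$ projecting onto the trace-zero part $L_{p,0}$ is contractive on each $L_p(\F O_F)$, so $(L_{1,0},L_{2,0})_\theta=L_{q,0}$ for $\theta=\tfrac{2}{q}-1\in(0,1)$. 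Granting the endpoint bound $\|\Phi\|_{L_{2,0}\to L_{2,0}}\le C(2,N)<1$, complex interpolation then yields
\[
\|\Phi\|_{L_{q,0}\to L_{q,0}}\le \|\Phi\|_{L_{1,0}\to L_{1,0}}^{\theta}\,\|\Phi\|_{L_{2,0}\to L_{2,0}}^{1-\theta}\le C(2,N)^{\,2-2/q},
\]
and because $2-2/q>0$ for $1<q\le 2$, the constant $C(q,N):=C(2,N)^{2-2/q}$ is strictly less than $1$. Thus the whole statement reduces to the case $q=2$.

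For the endpoint I would first note that $\Phi$ is self-adjoint on the Hilbert space $L_2(\F O_F)$: using the compatibilities $\alpha_2(\omega)^\ast=\alpha_2(\omega^\sharp)$ and $\beta_2(\omega)^\ast=\beta_2(\omega^\sharp)$ together with the symmetry of the defining sum in \eqref{eqn:conjugation}, each summand of $\Phi$ has its adjoint appearing in the sum. Moreover $\Phi$ is a unital, trace-preserving Markov map (an average of conjugations by the entries of the unitary $\Omega=[\omega_{ij}]\in M_N(L_1(\F O_F))$, with $\sum_{i,j}\omega_{ij}\star\omega_{ij}^\sharp=\sum_{i,j}\omega_{ij}^\sharp\star\omega_{ij}=N$), so $\|\Phi\|_{L_2\to L_2}=1$ with the value $1$ attained at the invariant central vector $p_0$. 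The content of the proposition is therefore a \emph{spectral gap}: that the spectrum of $\Phi$ on $L_{2,0}$ lies in a ball of radius $C(2,N)<1$. To locate this spectrum I would exploit symmetry. Since the $\omega_{ij}$ span a copy of the fundamental corepresentation and $\Phi$ is built by symmetric conjugation against them, $\Phi$ commutes with the adjoint action of $O^+_F$ on $L_2(\F O_F)=\bigoplus_n\big(H_n\otimes\overline{H_n}\big)$. By Schur's lemma $\Phi$ preserves each isotypic component, and because left and right convolution by a coefficient of $\hat U^1$ shift the block index $n$ by $\pm1$ via the fusion rules \eqref{eqn:fusion}, on the multiplicity space of each fixed isotype $\Phi$ acts as a \emph{tridiagonal} (Jacobi) operator in $n$.

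It then remains to estimate these Jacobi operators. Using the convolution formula \eqref{eqn:convformula}, the Clebsch--Gordan isometries $V_l^{n,k}$, and the dimension formula $d_n=S_n(N)=\tfrac{\rho^{n+1}-\rho^{-n-1}}{\rho-\rho^{-1}}$ with $\rho>1$ (here is where $N\ge 3$ enters), one computes the matrix coefficients explicitly; the off-diagonal weights converge to a definite limit controlled by $\rho^{-1}<1$ as $n\to\infty$, which forces each Jacobi operator to have norm bounded by a single constant $C(2,N)<1$. The trivial isotype is the radial part $\overline{\mathrm{span}}\{p_n:n\ge0\}$, on which the eigenvalue $1$ occurs at $p_0$; there one must show that the second spectral value of the radial Jacobi operator is strictly below $1$. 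I expect \emph{this uniform spectral gap} to be the main obstacle: one must simultaneously separate the top eigenvalue $1$ from the rest of the radial spectrum and bound the norms of the Jacobi operators over the infinitely many isotypes by a single $C(2,N)<1$. Both rely on the exponential growth rate $\rho>1$ of the quantum dimensions, and the computation is modeled on Vergnioux's treatment of the $p=2$ case in \cite{Ve}.
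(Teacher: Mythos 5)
Your interpolation reduction is exactly the paper's argument: the bound $\|\Phi\|_{L_{1,0}\to L_{1,0}}\le 1$ from Lemma \ref{lem:L_1contractive}, the identification of $L_{q,0}$ as the complex interpolation space between $L_{1,0}$ and $L_{2,0}$, and the resulting estimate $C(2,N)^{2-2/q}<1$ (the paper takes $\theta=2(1-q^{-1})$; your $\theta=\tfrac2q-1$ attaches to the opposite ordering of the couple, but your final exponent agrees). The genuine gap is the $q=2$ endpoint, which carries all of the remaining content and which you do not establish: you outline a program --- self-adjointness, commutation with the adjoint action, block-tridiagonal Jacobi operators in the block index, a uniform spectral gap over infinitely many isotypes --- and you explicitly flag the uniform gap as ``the main obstacle.'' As written, the proposal therefore does not close. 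The paper does not prove this endpoint from scratch either: it quotes the estimate $\|\Phi\|_{L_{2,0}\to L_{2,0}}\le C(N)<1$ directly from the proof of Theorem 7.2 of Vaes--Vergnioux \cite{VaVe}, where exactly this conjugation-by-generators contraction is the engine behind simplicity and uniqueness of trace for $C(O^+_F)$; note that the relevant source is \cite{VaVe}, not Vergnioux's rapid-decay paper \cite{Ve} that you name. Your sketched spectral analysis is aimed at the right target and is broadly in the spirit of what Vaes--Vergnioux do, so the fix is either to carry out that computation in full or to invoke their result. One small technical point: since $\hat U^1\otop\hat U^n\cong\hat U^{n-1}\oplus\hat U^{n+1}$, two-sided convolution by degree-one coefficients shifts the block index by $0$ or $\pm2$, so your Jacobi operators are tridiagonal only after restricting to each parity class of $n$; this does not affect the strategy.
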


\begin{proof}
From Lemma \ref{lem:L_1contractive} we have $\|\Phi\|_{L_{1,0} \to L_{1,0}} \le 1$, and in the proof of Theorem 7.2 in \cite{VaVe} it is shown that there exists a constant $0 <C(N) < 1$ such that $\|\Phi\|_{L_{2,0} \to L_{2,0}} \le C(N)$.  Since $L_{q,0}$ is the complex interpolation space $(L_{1,0}, L_{2,0})_{\theta}$, where $\theta = 2(1-q^{-1})$, it follows that \[\|\Phi\|_{L_{q,0} \to L_{q,0}} \le \|\Phi\|_{L_{1,0} \to L_{1,0}}^{1 - \theta}\|\Phi\|_{L_{2,0} \to L_{2,0}}^{\theta} \le C(N)^{2(1-q^{-1})} < 1. \]
\end{proof}

We now have all the tools we need to prove Theorem \ref{thm:uniquetrace}.

\begin{proof}[Proof of Theorem \ref{thm:uniquetrace}] 
Fix $2 \le p < \infty$ and let $1 < q \le 2$ be the conjugate exponent to $p$.  Denote by $\hat h$ the tracial Haar state of $O^+_F = \widehat {\F O_F}$, which corresponds to the positive definite function $p_0 = \varphi^{\lambda}_{\Lambda(p_0),\Lambda(p_0)} \in L_\infty(\F O_F)$.  To prove the theorem, note that it suffices to show \begin{align} \label{eqn:powers}\lim_{k \to \infty}\|\Phi^k(\omega) - \langle \omega,p_0 \rangle \omega_{p_0}\|_{C^*_p(\F O_F)} = 0 \qquad (\omega \in L_1(\G)).
\end{align}  Indeed, if \eqref{eqn:powers} is true and $\tau$ is any $C^*_p(\F O_F)$-norm continuous tracial state on $L_1(\F O_F)$, then $\tau(\omega) = \lim_{k \to \infty} \tau(\Phi^k(\omega))  = \tau(\omega_{p_0})\langle \omega,p_0 \rangle= \langle \omega,p_0 \rangle$ for each $\omega \in L_1(\F O_F)$.  

To prove \eqref{eqn:powers}, fix $x \in C_c(\F O_F)$ and consider $\omega_x \in L_1(\F O_F)$.  (By density, it suffices to prove \eqref{eqn:powers} for elements  of the form $\omega_x$).  Using the notation from Corollary \ref{cor:Lqestimate_operatornorm}, let $n(x) = \max \{n:  p_n x \ne 0\}$.   For each $k \in \N$, let $z_k \in C_c(\F O_F)$ be the unique element such that $\Phi^k(\omega_x - \langle \omega_x,p_0 \rangle\omega_{p_0}) = \omega_{z_k}$.  Using  the fusion rules for $O^+_F$ described in Section \ref{section:prelims_orthogonal}, we get the estimate $n(z_k) \le n(x) +2k$.  In particular, it follows from Corollary \ref{cor:Lqestimate_operatornorm} and Proposition \ref{prop:contractive} that
\begin{align*}
\|\Phi^k(\omega_x) - \langle \omega_x,p_0 \rangle\omega_{p_0}\|_{C^*_p(\F O_F)}& = \|\Phi^k(\omega_x - \langle \omega_x,p_0 \rangle\omega_{p_0})\|_{C^*_p(\F O_F)} = \|\omega_{z_k}\|_{C^*_p(\F O_F)} \\
&\le D_N(n(x)+ 2k +1)^{1+ 1/p}\|\omega_{z_k}\|_q \\
& = D_N(n(x)+ 2k +1)^{1+ 1/p} \|\Phi^k(\omega_x -\langle \omega_x,p_0 \rangle\omega_{p_0})\|_q \\
& \le D_N(n(x)+ 2k +1)^{1+ 1/p} \|\Phi\|_{L_{q,0} \to L_{q,0}}^k\|\omega_x - \langle \omega_x,p_0 \rangle\omega_{p_0}\|_q \\
&\le D_N(n(x)+ 2k +1)^{1+ 1/p} C(q,N)^k\|\omega_x - \langle \omega_x,p_0 \rangle\omega_{p_0}\|_q. 
\end{align*}      
Since $0 <C(q,N) <1$, $\lim_{k \to \infty}\|\Phi^k(\omega_x) - \langle \omega_x,p_0 \rangle\omega_{p_0}\|_{C^*_p(\F O_F)} = 0$. 
\end{proof}

\subsection{Unitary free quantum groups} \label{section:unitary}

In this section we will outline how the ideas developed to study the $L_p$-C$^\ast$-algebras $C^*_p(\F O_F)$ can be adapted to the case of (unimodular) unitary free quantum groups.  

Let 
$F \in \text{GL}_N(\C)$ be such that $F \bar F = \pm 1$.  Recall from \cite{VaWa, Ba} that the \textit{unitary free quantum group} $\F U_F$ is the discrete quantum group whose compact dual quantum group  $U^+_F = \widehat{\F U_F}$ is given by the universal C$^\ast$-algebra 
\[C_u(U^+_F) = C^*\big(\{\hat u_{ij}\}_{1 \le i,j \le N} \ | \ \hat U = [\hat u_{ij}] \text{ and }  F \bar{ \hat U} F^{-1}  \text{ are unitary}\big).\]
The coproduct $\hat \Delta_u$ for $U^+_F$ is defined exactly as it is for $O^+_F$.  In particular, $\hat \Delta_u$ is defined so that $\hat U$ becomes an irreducible unitary representation of $U^+_F$.  It is known that $\F U_F$ is non-amenable if and only if $N \ge 2$ \cite{Ba}. As in the orthogonal case, we have by \cite[Th\'eor\`eme 3]{Ba} that $\F U_F$ is unimodular if and only if our $F$ (which satisfies $F \bar F = \pm 1$) is a unitary matrix.  In this case, using  \cite[Remarque on page 145]{Ba}, we can replace $F$ by the identity matrix without changing the underlying discrete quantum group.  Therefore, for the remainder, we  work with $\F U_N:=\F U_{1_N}$ and $U_N^+:= \widehat{\F U_N}$.   

Let us recall the structure of the irreducible unitary representations of $U^+_N$ obtained in \cite{Ba}.  There is a natural identification between $\Irr(U^+_N)$ and the free semigroup  $\F_2^+$ on two generators, which sends the pair of irreducible unitary representations $(\hat U, \overline{\hat U})$ to the pair of generators of $(\alpha,\beta)$ of $\F_2^+$ and the trivial representation to the empty word $e \in \F_2^+$. The conjugation and fusion rules of the irreducible unitary representations are determined recursively by \[\overline{g\hat U} = \overline{\hat U}\overline{g}, \quad \overline{\hat U g} = \overline{g}\overline{\hat U}, \quad g \hat U \otop \hat U h = g\hat U \hat U h, \quad g \hat U \otop \overline{\hat U} h = g \hat U \overline{\hat U}h \oplus (g \otop h) \qquad (g,h \in \F_2^+).\] 
In particular, each inclusion $\gamma \subset g \otop h$ of irreducible  representations is multiplicity free.

Finally, fix a family $(H_g)_{g \in \F_2^+}$ of representative Hilbert spaces on which the irreducible representations of $U^+_N$ act.  Denote by $\ell:\F_2^+ \to \N_0$ the canonical word length function and define, for each $n \in \N_0$, the subspace \[C_c(\F U_N)_n = \bigoplus_{
\ell(g) = n} \mc B(H_g) \subset C_c(\F U_N).\]  Note that $C_c(\F U_N)_n = p_nL_\infty(\F U_N)$ where $p_n$ is the central projection $p_n = \sum_{\ell(g) = n} p_g$.  

Using the above length function $\ell$ and the corresponding homogeneous subspaces $C_c(\F U_N)_n$ of length $n$, we will prove an $L_q$-property of rapid decay for the quantum group $\F U_N$ similar to Proposition \ref{prop:qHaagerup}.  As in the orthogonal case, the following theorem is a key tool in proving that the $L_p$-C$^\ast$-algebras associated to $\F U_N$ are exotic.  To obtain this theorem and the results that follow, we exploit the close structural relationship between $\F O_F$ and $\F U_F$ discovered in \cite{Ba}.  This allows us to use our work in the previous section on $\F O_F$ to deduce corresponding results for $\F U_F$.  

Recall that $\alpha_q$ denotes the natural left action of $L_1(\F U_N)$ on $L_q(\F U_N)$.

\begin{thm} \label{thm:unitaryRD}
Let  $N \ge 3$.  Then there is a constant $D_N > 0$ depending only on $N$ such that for each $1 < q \le 2$ and each  $n \in \N_0$, 
\begin{align} \label{eqn:q}\|\alpha_q(\omega_x)\|_{\mc B(L_q(\F U_N))} \le D_N(n+1)\|x\|_q \qquad (x \in C_c(\F U_N)_n).\end{align}  Moreover, if $2 \le p < \infty$ is the conjugate exponent to $q$, then  
\begin{align}\label{eqn:p}\|\omega_x\|_{C^*_p(\F U_N)} \le D_N(n+1)\|x\|_q.\end{align}
\end{thm}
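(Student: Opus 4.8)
The plan is to transcribe the two-step argument from the orthogonal case -- the block estimate of Lemma \ref{lem:block_inequality} followed by the summation of Proposition \ref{prop:qHaagerup} -- to $\F U_N$, and then to deduce \eqref{eqn:p} from \eqref{eqn:q} exactly as in Proposition \ref{prop:Lqestimate_operatornorm}. First I would establish a \emph{block estimate}: a constant $D_N>0$ depending only on $N$ such that for all $g,h \in \F_2^+$ with $\ell(g)=n$, $\ell(h)=k$, every irreducible $\gamma \subset g \otop h$, and all $1 \le q \le 2$,
\[
\|p_\gamma(\alpha_q(\omega_x)(y))\|_q \le D_N\|x\|_q\|y\|_q \qquad (x \in \mc B(H_g), \ y \in \mc B(H_h)).
\]
As in the orthogonal case, Lemma \ref{lem:compatibility} reduces this to the convolution identity $p_\gamma(x \star y) = (d_g d_h/d_\gamma)(V_\gamma^{g,h})^*(x \otimes y)V_\gamma^{g,h}$ (the analogue of \eqref{eqn:convformula}), where $V_\gamma^{g,h}\colon H_\gamma \hookrightarrow H_g \otimes H_h$ is the isometric intertwiner, unique up to a phase by the stated multiplicity-freeness of the fusion rules. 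The estimate then follows by rewriting $L_q$-norms as Schatten $q$-norms, applying Proposition \ref{prop:Schatteninterpolation}, and bounding the dimension prefactor by $\tilde D_N(d_\gamma/(d_g d_h))^{1/2}$, exactly as in \eqref{eqn:dim_ineq}--\eqref{eqn:schatten_ineq}.

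The feature that makes this transfer possible is Banica's description of the representation category of $U_N^+$ from \cite{Ba}. Since $\dim H_g = S_{\ell(g)}(N) = d_{\ell(g)}$ coincides with the orthogonal dimensions, and since each inclusion $\gamma \subset g \otop h$ arises by iterating the single elementary cancellation $g'\hat U \otop \overline{\hat U}h' \supset g' \otop h'$, the isometry $V_\gamma^{g,h}$ should factor as a product of the same elementary cup/cap morphisms (built from $F$) appearing in \cite[Equations $(7.1)$--$(7.3)$]{VaVe}. Consequently the accumulated normalization factor telescopes to the same product over cancellation steps as in the orthogonal computation, with $n,k$ and $l = \ell(\gamma) = n+k-2r$ playing their usual roles, and admits the uniform bound \eqref{eqn:dim_ineq} with $\tilde D_N$ depending only on $N$. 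This is the step I expect to be the main obstacle: one must carefully track which legs of $g$ and $h$ are of type $\hat U$ versus $\overline{\hat U}$ (only conjugate-matched legs may be contracted along a noncrossing matching), confirm that the resulting partial isometry is genuinely isometric onto $H_\gamma$ with the correct normalization, and verify that, uniformly over the two-letter index set $\F_2^+ \times \F_2^+$, the dimension bookkeeping never produces a constant worse than the single-generator orthogonal bound.

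With the block estimate in hand, \eqref{eqn:q} follows by the summation argument of Proposition \ref{prop:qHaagerup}. Writing $y = \sum_k p_k y$ (with $p_k$ the central projection onto length-$k$ components), I would expand $\|\alpha_q(\omega_x)(y)\|_q^q$ over the components $\gamma$, apply the block estimate, and use H\"older's inequality together with the crucial combinatorial fact that both index sets $\{h : \gamma \subset g \otop h\}$ and $\{\gamma : \gamma \subset g \otop h\}$ have cardinality at most $n+1$. This bound holds because a word $g$ of length $n$ can be cancelled against $h$ in at most $\min\{n,\ell(h)\}+1 \le n+1$ ways, each amount of cancellation producing a single component (multiplicity-freeness), and dually $h$ is determined by $g$, $\gamma$, and the cancellation amount. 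The computation then yields $\|\alpha_q(\omega_x)(y)\|_q^q \le D_N^q(n+1)^{q/p+1}\|x\|_q^q\|y\|_q^q$, which is \eqref{eqn:q}.

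Finally, \eqref{eqn:p} is deduced from \eqref{eqn:q} verbatim from the proof of Proposition \ref{prop:Lqestimate_operatornorm}. For an $L_p$-representation $\pi$ and a unit cyclic vector, Lemma \ref{lem:Kac_result} gives $\|\pi(\omega_x)\| \le \liminf_{k}\|(\omega_x^\sharp \star \omega_x)^{\star 2k}\|_q^{1/4k}$, and the right-hand side is controlled by telescoping through $\alpha_q$ via \eqref{eqn:q}. The only point requiring the unitary structure is that $\omega_x^\sharp = \omega_{S(x^*)}$ with $S(x^*) \in \mc B(H_{\bar g})$; since conjugation preserves word length, $\ell(\bar g) = \ell(g) = n$, so \eqref{eqn:q} applies equally to $\omega_x$ and $\omega_x^\sharp$. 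The resulting bound $\|(\omega_x^\sharp \star \omega_x)^{\star 2k}\|_q \le (D_N(n+1))^{4k-1}\|x\|_q^{4k}$ gives $\|\pi(\omega_x)\| \le (D_N(n+1))^{(4k-1)/4k}\|x\|_q \to D_N(n+1)\|x\|_q$, establishing \eqref{eqn:p}.
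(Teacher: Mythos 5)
Your overall architecture matches the paper's: a block estimate for a single inclusion $\gamma \subset g \otop h$, a summation step, and the deduction of \eqref{eqn:p} from \eqref{eqn:q} via Lemma \ref{lem:Kac_result} (your observation that $\omega_x^\sharp = \omega_{S(x^*)}$ is supported on $\bar g$ with $\ell(\bar g)=\ell(g)$ is exactly the point used there). However, your summation step has a genuine gap. In the orthogonal case each length class is a single irreducible, so expanding over components and invoking $|\{k : l \subset n \otimes k\}| \le n+1$ suffices. Here $x \in C_c(\F U_N)_n$ is spread over the $2^n$ words of length $n$, and for a \emph{fixed} target irreducible $\gamma$ and fixed $k=\ell(h)$ the contributing pairs $(g,h)$ are parametrized by the cancellation word $\tau$ of length $r=(n+k-\ell(\gamma))/2$: writing $\gamma = g'h'$, one has $g=g'\tau$ and $h=\bar\tau h'$ with $\tau$ ranging over up to $2^{r}$ words. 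Neither of the two cardinality-$(n+1)$ facts you cite (which fix $g$, or fix both $g$ and $h$) controls this index set, and a triangle inequality followed by H\"older over it produces an exponential factor in $r$, destroying the linear bound.

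The paper closes this gap with an intermediate estimate (\eqref{eqn:block}) at the level of the length-homogeneous subspaces: for fixed lengths $(l,n,k)$ it groups terms by the splitting $(g',h')$ of $\gamma$, uses that the projections $p_{g'h'}$ for distinct $(g',h')$ are mutually orthogonal so the $q$-th powers add, and applies H\"older only to the inner sum over $\tau$ in the form $(\sum_\tau a_\tau b_\tau)^q \le (\sum_\tau a_\tau^q)(\sum_\tau b_\tau^p)^{q/p} \le (\sum_\tau a_\tau^q)(\sum_\tau b_\tau^q)$, after which $\sum_{g',\tau}\|p_{g'\tau}x\|_q^q$ and $\sum_{h',\tau}\|p_{\bar\tau h'}y\|_q^q$ recombine into $\|x\|_q^q\|y\|_q^q$ with no cardinality loss; only then does the $(n+1)$ count of length classes enter, verbatim as in Proposition \ref{prop:qHaagerup}. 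You would need to add this aggregation step. Separately, your block estimate is a sketch rather than a proof: the paper's Lemma \ref{lem:blockUnitary} requires a three-case analysis reducing a general inclusion $\gamma \subset g \otop h$ to the alternating-word case (which is literally identified with the orthogonal picture) by splitting off the fully cancelled part $f$ via an invariant vector in $H_f \otimes H_{\bar f}$ and the irreducibly tensored ends, as in \eqref{eqn:V}. You correctly flag this as the main obstacle, but the ``telescoping of normalizations'' you invoke is precisely what that case analysis has to establish.
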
 

\begin{rem}
Presumably the above result is true for $N=2$ as well. However, our proof technique (which relies heavily on the $\F O_F$ case where $N \ge 3$) only allows us to conclude the result when $N \ge 3$. 
\end{rem} 

As in the orthogonal case, Theorem \ref{thm:unitaryRD} relies on the following ``local''  inequality.  Compare with Lemma \ref{lem:block_inequality}.

\begin{lem} \label{lem:blockUnitary}
Let $N \ge 3$ and $D_N > 0$ the constant given by Lemma \ref{lem:block_inequality}.  Then for any $1 \le q \le 2$ and any $\gamma, g,h \in \F_2^+$, we have
\begin{align}\label{eqn:irrepcase}
\|p_\gamma (\alpha_q(\omega_x)(y))\|_{q} \le D_N \|x\|_{q}\|y\|_{q} \qquad (x \in\mc B(H_g), \  y \in \mc B(H_h)).
\end{align}
\end{lem}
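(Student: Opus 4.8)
The plan is to mimic the proof of Lemma \ref{lem:block_inequality} as closely as possible, transferring the orthogonal computation to the unitary setting by means of Banica's description \cite{Ba} of the fusion rules of $U^+_N$. First I would dispose of the trivial case: if $H_\gamma$ is not a subrepresentation of $H_g \otimes H_h$, then (using Lemma \ref{lem:compatibility} exactly as in the orthogonal proof) $p_\gamma(\alpha_q(\omega_x)(y)) = p_\gamma(x \star y) = 0$ and there is nothing to prove, so I assume $\gamma \subset g \otimes h$. Because this inclusion is multiplicity-free, there is, up to a phase, a unique isometric intertwiner $V^{g,h}_\gamma : H_\gamma \hookrightarrow H_g \otimes H_h$. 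The derivation of \eqref{eqn:convformula} used only the Podle\'s--Woronowicz coproduct formula and Vergnioux's identity $p_\gamma \cdot (\omega_{p_g}\star\omega_{p_h}) = (d_g d_h/d_\gamma)\,\omega_{p_\gamma}$, both valid for any compact quantum group with multiplicity-free fusion; applying them here gives
\[
p_\gamma(x \star y) = \Big(\frac{d_g d_h}{d_\gamma}\Big)(V^{g,h}_\gamma)^*(x \otimes y)V^{g,h}_\gamma .
\]
Thus, after rewriting the $L_q$-norms as Schatten $q$-norms, the lemma reduces to the analogue of inequality \eqref{eqn:schatten_ineq} for the isometry $V^{g,h}_\gamma$.

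Next I would invoke Banica's structural results to describe $V^{g,h}_\gamma$. By the fusion rules recalled above, the inclusion $\gamma \subset g \otimes h$ is realized by cancelling a suffix of $g$ against a matching prefix of $h$, where each cancelled pair has the form $\hat U \overline{\hat U}$ (i.e.\ $\alpha\beta$ or $\beta\alpha$). The crucial point is that every such elementary cancellation is governed by the \emph{same} evaluation/coevaluation maps built from the parameter matrix $F$ that appear in the orthogonal category of $O^+_F$: locally the contraction $\hat U \otop \overline{\hat U} \supset \text{trivial}$ in $U^+_N$ is identical to the contraction $\hat U \otop \hat U \supset \text{trivial}$ in $O^+_F$. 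Consequently, after identifying $H_g, H_h$ with the appropriate highest-weight subspaces of tensor powers of the fundamental representation and its conjugate, the explicit formula for $V^{g,h}_\gamma$ (the oriented analogue of \cite[Equations (7.1)--(7.3)]{VaVe}) has the same shape as in the orthogonal case. Writing $x = \sum_{i,j} x_{i,j}\otimes e_{ij}$ and $y = \sum_{i,j} f_{ij}\otimes y_{i,j}$ with respect to the bases $\{e_i\}$ and $\{f_i = F e_i\}$ of $H_1$ as in the proof of Lemma \ref{lem:block_inequality}, one arrives at
\[
(V^{g,h}_\gamma)^*(x \otimes y)V^{g,h}_\gamma = c_{g,h,\gamma}\,\Delta(p_\gamma)\Big(\sum_{i,j} x_{i,j}\otimes y_{\check i,\check j}\Big)\Delta(p_\gamma),
\]
where $c_{g,h,\gamma}$ is a product of dimension ratios identical in form to the orthogonal scalar.

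Finally I would run the Schatten estimate verbatim: applying Proposition \ref{prop:Schatteninterpolation} to the right-hand side and bounding the scalar $c_{g,h,\gamma}$ by $\tilde D_N (d_\gamma/d_g d_h)^{1/2}$, which is the bound \eqref{eqn:dim_ineq} depending only on $N$ and $\rho$ through the Chebyshev dimensions $d_n = S_n(N)$ that govern each elementary cancellation, yields precisely \eqref{eqn:schatten_ineq} with the orthogonal constant $D_N = \tilde D_N$. Since $(d_\gamma/d_g d_h)^{1/2} \le (d_\gamma/d_g d_h)^{1 - 1/q}$ for $1 \le q \le 2$, this gives \eqref{eqn:irrepcase}. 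The hard part will be the middle step: one must keep careful track of the orientation of the strands (as $\hat U \ne \overline{\hat U}$ in the unitary case) and verify that the product of local dimension weights telescopes to exactly the orthogonal quantity appearing in \eqref{eqn:dim_ineq}, so that the \emph{same} constant $D_N$ is produced rather than one depending on $g, h, \gamma$; once this is secured, the Schatten-norm and interpolation steps are identical to the orthogonal argument.
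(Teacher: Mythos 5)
Your setup is exactly right and matches the paper: the trivial case when $\gamma \not\subset g\otop h$, the reduction via the convolution formula to a Schatten-norm inequality for the multiplicity-free intertwiner $V_\gamma^{g,h}$, and the final steps (Proposition \ref{prop:Schatteninterpolation}, the dimension bound \eqref{eqn:dim_ineq}, and $(d_\gamma/d_gd_h)^{1/2} \le (d_\gamma/d_gd_h)^{1-1/q}$) are all as in the paper. The gap is in the middle step, which you yourself flag as ``the hard part'' but then only assert by analogy. The claimed formula
\[
(V^{g,h}_\gamma)^*(x \otimes y)V^{g,h}_\gamma = c_{g,h,\gamma}\,\Delta(p_\gamma)\Big(\textstyle\sum_{i,j} x_{i,j}\otimes y_{\check i,\check j}\Big)\Delta(p_\gamma)
\]
is not correct for general words $g,h \in \F_2^+$: it is the right shape only when the cancelled suffix/prefix is ``entangled'' with the rest of the word in the same way as in the orthogonal case, i.e.\ essentially only for alternating words in $\hat U, \overline{\hat U}$. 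For a general word, the fusion rules $g\hat U \otop \hat U h = g\hat U\hat U h$ versus $g\hat U \otop \overline{\hat U}h = g\hat U\overline{\hat U}h \oplus (g\otop h)$ force a two-tier structure on the cancelled part $\tau$ (where $g = g'\tau$, $h = \bar\tau h'$, $\gamma = g'h'$): a rigid block $f$ with $\tau = \tau' \otop f$ splits off as a genuine tensor factor of $H_g$ and is contracted entirely by the canonical invariant vector $t = d_f^{-1/2}\sum_I e_I \otimes \overline{e_I}$ with \emph{no} highest-weight projection, while only the remaining block $\tau'$ undergoes the orthogonal-type recursive contraction with projections. Your single sandwich by $\Delta(p_\gamma)$ over one sum of multi-indices of length $\ell(\tau)$ conflates these two mechanisms, and without separating them you cannot verify that the scalar telescopes to $\tilde D_N(d_\gamma/d_gd_h)^{1/2}$ with a constant independent of $g,h,\gamma$.

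The paper's proof supplies exactly the missing structure via a three-case analysis: (i) $g\otop h$ irreducible (trivial); (ii) $\bar g$ and $h$ both alternating, where Vergnioux's identification $H_g = H_{\ell(g)}$, $V_\gamma^{g,h} = V_{\ell(\gamma)}^{\ell(g),\ell(h)}$ reduces everything to the orthogonal inequality \eqref{eqn:schatten_ineq}; and (iii) the general case, where one decomposes $g = g_1 \otop g_2 \otop f$ and $h = \bar f \otop h_2 \otop h_1$, writes $V_\gamma^{g,h}$ as in \eqref{eqn:V} with spectator legs $g_1,h_1$, the invariant vector $t$ on the $f$-block (contributing $d_f^{-1}$), and a case-(ii) isometry on the middle block, and then checks that $d_f^{-1}\bigl(d_{\ell(g''h'')}/(d_{\ell(g_2)}d_{\ell(h_2)})\bigr)^{1/2} = (d_\gamma/(d_gd_h))^{1/2}$ by multiplicativity of dimensions over the rigid splittings. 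To complete your argument you would need to carry out this decomposition (or an equivalent one); as written, the key identity is assumed rather than proved.
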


\begin{proof}
Fix $\gamma, g,h \in \F_2^+$ and $x \in \mc B(H_g)$, $y \in \mc B(H_h)$ as above.  We assume that $\gamma \subset g \otop h$ (otherwise $p_\gamma (\alpha_q(\omega_x)y) = 0$ and there is nothing to prove).  Since the inclusion $\gamma  \subset g \otop h$ is multiplicity-free, the arguments in the proof of Lemma \ref{lem:block_inequality} apply in the present context to show that it suffices to obtain the following analogue of inequality \eqref{eqn:schatten_ineq}:
\begin{align} \label{eqn:schatten_ineqU}
\|(V_{\gamma}^{g,h})^*(x \otimes y)V_{\gamma}^{g,h}\|_{S_q(H_\gamma)} \le D_N \Big( \frac{d_{\gamma}}{d_gd_h}\Big)^{1-1/q} \|x\|_{S_q(H_g)} \|y\|_{S_q(H_h)},
\end{align} 
where $V_{\gamma}^{g,h}$ is the (unique up to scaling by $\T$) isometric inclusion of representations $H_\gamma \hookrightarrow H_g \otimes H_h$.  To establish \eqref{eqn:schatten_ineqU}, we will examine three cases.  

Case 1:  If $g \otop h$ is irreducible, then $\gamma = g \otop h$ and \eqref{eqn:schatten_ineqU} holds trivially.  

Case 2:  Next we assume $\bar g = \hat U \overline{\hat U} \hat U...$ and $h = \hat U \overline{\hat U} \hat U...$ (or $\bar g = \overline{\hat U} \hat U\overline{\hat U}...$ and $h = \overline{\hat U} \hat U\overline{\hat U}...$ ) are both alternating words in $\{\hat U, \overline{\hat U}\}$.  Then, as shown in \cite[Section 2]{Ve05} (see also \cite[Page 16]{Ve}), we have identifications  $H_g = H_{\ell(g)}$, $H_h = H_{\ell(h)}$, $H_\gamma = H_{\ell(\gamma)}$ and $V_\gamma^{g,h} = V_{\ell(\gamma)}^{\ell(g),\ell(h)}$, where $V_{\ell(\gamma)}^{\ell(g),\ell(h)}, H_{\ell(g)}, H_{\ell(h)}, H_{\ell(\gamma)}$ are the corresponding objects associated to the orthogonal free quantum group $\F O_N:=\F O_{1_N}$.  Using these identifications, \eqref{eqn:schatten_ineqU} follows immediately from \eqref{eqn:schatten_ineq}.  

Case 3: Finally we consider the most general case.  Given $\gamma \subset g \otop h$, the fusion rules for $\text{Irr}(U^+_N)$ imply that there is a unique triple $(g',h',\tau) \in \F_2^+$ such that $g=g'\tau$, $h = \bar \tau h'$ and $\gamma =g'h'$.  Let $g_1$ (respectively $h_1$) be the longest subword of $g'$ (respectively $h'$) such that $g' = g_1 \otop g''$ (respectively $h' = h'' \otop h_1 $).   Next, let $f$ be the longest subword of $\tau$ such $\tau = \tau' \otop f$.  Writing $g_2 = g''\tau'$ and $h_2 = \bar{\tau'}h''$, we obtain decompositions $g= g_1 \otop g_2 \otop f$,   $h = \bar{f} \otop h_2 \otop h_1$ and $\gamma = g_1 \otop g''h'' \otop h_1$.  By construction $g''h'' \subset g_2 \otop h_2$ is an inclusion of the type given by case 2.  To verify \eqref{eqn:schatten_ineqU} in this general case, let $(e_I)_I$ and $(\overline{e_I})_I$ be orthonormal bases for $H_f$ and $H_{\bar f}$, respectively, with the property that $t = d_{f}^{-1/2} \sum_{I} e_I \otimes \overline{e_I}:\C \hookrightarrow H_f \otimes H_{\bar f}$ is an isometric intertwiner (unit invariant vector).  This is always possible since $U^+_N$ is of Kac type.  Then we obtain the following expression for $V_\gamma^{g,h}$ (after making the identifications $H_{g_2} = H_{\ell(g_2)}$, $H_{h_2} = H_{\ell(h_2)}$,  $H_{g''h''} = H_{\ell(g''h'')}$ and $V_{g''h''}^{g_2,h_2} = V_{\ell(g''h'')}^{\ell(g_2),\ell(h_2)}$ as in case 2):
\begin{align} \label{eqn:V}V_\gamma^{g,h} = (\iota_{H_{g_1} \otimes H_{g_2} } \otimes t \otimes \iota_{H_{h_2 } \otimes H_{h_1}})\circ (\iota_{H_{g_1}} \otimes V_{\ell(g''h'')}^{\ell(g_2),\ell(h_2)} \otimes \iota_{H_{h_1}}).
\end{align}
Using this formula we may now proceed along the lines of the proof of Lemma \ref{lem:block_inequality} (using the notation therein).  Identify $H_{\ell(g_2)}$ with the highest weight subspace of $H_{\ell(g'')} \otimes (\C^N)^{\otimes \ell(\tau ')}$ and identify $H_{\ell(h_2)}$ with the highest weight subspace of $ (\C^N)^{\otimes \ell(\tau ')} \otimes H_{\ell(g'')}$.  Let $\{e_{i,j}\}_{i,j}$ , $\{e_{I,J}\}_{I,J}$ and $\{\overline{e}_{I,J}\}_{I,J}$ be the canonical matrix units for $\mc B( (\C^N)^{\otimes \ell(\tau ')})$, $\mc B(H_f)$ and $\mc B(H_{\bar f})$, respectively. 
Then we can uniquely write $x \in \mc B(H_g)$ and $y \in \mc B(H_h)$ as
\[x= \sum_{i,j,I,J} x_{ij}^{IJ} \otimes e_{ij} \otimes e_{IJ}, \qquad y= \sum_{i,j,I,J} \overline{e}_{IJ} \otimes e_{\check{i}, \check{j}} \otimes y_{ij}^{IJ}, \] where $x_{ij}^{IJ} \in \mc B(H_{g_1} \otimes H_{g''})$ and $y_{ij}^{IJ} \in \mc B(H_{h''} \otimes H_{h_1})$.  Using this decomposition and \eqref{eqn:V}, one readily calculates
\begin{align*}
&\|(V_{\gamma}^{g,h})^*(x \otimes y)V_{\gamma}^{g,h}\|_{S_q(H_\gamma)} \\
&= d_f^{-1} \Big\|(\iota_{H_{g_1}} \otimes V_{\ell(g''h'')}^{\ell(g_2),\ell(h_2)} \otimes \iota_{H_{h_1}})^*\Big(\sum_{i,j,I,J} x_{ij}^{IJ} \otimes e_{i,j} \otimes e_{\check{i},\check{j}} \otimes y_{ij}^{IJ}\Big)(\iota_{H_{g_1}} \otimes V_{\ell(g''h'')}^{\ell(g_2),\ell(h_2)} \otimes \iota_{H_{h_1}})\Big\|_{S_q(H_\gamma)}\\
&\le d_f^{-1} D_N\Big(\frac{d_{\ell(g''h'')}}{d_{\ell(g_2)}d_{\ell(h_2)}}\Big)^{1/2}\Big\|\sum_{i,j,I,J} x_{ij}^{IJ} \otimes y_{ij}^{IJ}\Big\|_{S_q(H_{g_1} \otimes H_{g''} \otimes H_{h''} \otimes H_{h_1})} \quad (\text{using \eqref{eqn:dim_ineq}})\\ 
&\le d_f^{-1} D_N\Big(\frac{d_{\ell(g''h'')}}{d_{\ell(g_2)}d_{\ell(h_2)}}\Big)^{1/2} \|x\|_{S_q(H_g)}\|y\|_{S_q(h_h)}  \quad (\text{by Proposition \ref{prop:Schatteninterpolation}}) \\
&=D_N \Big( \frac{d_{\gamma}}{d_gd_h}\Big)^{1/2} \|x\|_{S_q(H_g)} \|y\|_{S_q(H_h)} \le D_N \Big( \frac{d_{\gamma}}{d_gd_h}\Big)^{1-1/q} \|x\|_{S_q(H_g)} \|y\|_{S_q(H_h)}.
\end{align*}
\end{proof}

We are now ready to prove Theorem \ref{thm:unitaryRD}.

\begin{proof}[Proof of Theorem \ref{thm:unitaryRD}]  First note that, as in the proof of Proposition \ref{prop:Lqestimate_operatornorm}, inequality \eqref{eqn:p} follows from \eqref{eqn:q} and Lemma \ref{lem:Kac_result}.  

To prove \eqref{eqn:q}, we first prove the following inequality: For any triple $(l,n,k) \in \N_0^3$,
\begin{align} \label{eqn:block}
\|p_l (\alpha_q(\omega_x)(y))\|_{q} \le D_N \|x\|_{q}\|y\|_{q} \qquad (x \in C_c(\F U_N)_n, \  y \in C_c(\F U_N)_k).
\end{align}
Fix $x \in C_c(\F U_N)_n$, $y \in C_c(\F U_N)_k$ and $l \in \N_0$.  We write $l \subset n \otimes k$ if there exist $\gamma, g, h \in \F_2^+$ such that $\gamma \subset g \otop h$ and $(\ell(\gamma), \ell(g), \ell(h)) = (l,n,k)$.  Note that to prove \eqref{eqn:block}, we can assume
$l \subset n \otimes k$, since $p_l (\alpha_q(\omega_x)(y)) = 0$ otherwise.   Let $\gamma \subset g \otop h$ with $(\ell(\gamma), \ell(g), \ell(h)) = (l,n,k)$ and let $r = \frac{n+k-l}{2}$.  Then there exists a unique triple $(\tau,g',h') \in (\F_2^+)^3$ with $(\ell(\tau), \ell(g'), \ell(h') = (r,n-r,k-r)$, $g=g'\tau$, $h=\bar \tau h'$ and $\gamma = g'h'$.  Moreover, every such triple $(\gamma, g,h)$ arises in this way.  Using this change of variables, we can write
\[p_l \alpha_q(\omega_x)(y) = \sum_{\substack{\ell(\gamma) = l\\
\ell(g) = n \\ \ell(h) = k}} p_{\gamma}\alpha_q(\omega_{p_{g}x})(p_hy) = \sum_{\substack{\ell(g') = n-r \\
\ell(h') = k-r}} \sum_{\ell(\tau) = r}p_{g'h'}\alpha_q(\omega_{p_{g'\tau}x})(p_{\bar \tau h'}y). \] Since the projections $p_{g'h'}$ are mutually orthogonal for differing values of $g',h'$, we obtain the following $L_q$-norm estimate. 
\begin{align*}
\|p_l \alpha_q(\omega_x)(y)\|_q^q &= \sum_{\substack{\ell(g') = n-r \\
\ell(h') = k-r}}\Big\| \sum_{\ell(\tau) = r}p_{g'h'}\alpha_q(\omega_{p_{g'\tau}x})
(p_{\bar \tau h'}y) \Big\|_q^q\\
&\le D_N^q\sum_{\substack{\ell(g') = n-r \\
\ell(h') = k-r}}\Big(\sum_{\ell(\tau) = r}\|p_{g'\tau}x\|_q \|p_{\bar \tau h'}y\|_q \Big)^q \quad (\text{by Lemma \ref{lem:blockUnitary}})\\
&\le  D_N^q\sum_{\substack{\ell(g') = n-r  \\
\ell(h') = k-r}}\Big(\sum_{\ell(\tau) = r}\|p_{g'\tau}x\|_q^q\Big) \Big(\sum_{\ell(\tau) = r} \|p_{\bar \tau h'}y\|_q^p \Big)^{q/p} \quad (\text{where $p^{-1} +q^{-1} = 1$}) \\
&\le D_N^q\sum_{\substack{\ell(g') = n-r \\
\ell(h') = k-r}}\Big(\sum_{\ell(\tau) = r}\|p_{g'\tau}x\|_q^q\Big) \Big(\sum_{\ell(\tau) = r} \|p_{\bar \tau h'}y\|_q^q \Big) \quad (\text{since $p \ge q$})\\
&=  D_N^q \|x\|_q^q\|y_q\|^q. 
\end{align*}
This proves \eqref{eqn:block}.

The proof of \eqref{eqn:q} using \eqref{eqn:block} is now almost identical to the proof of Proposition \ref{prop:qHaagerup}.  Indeed, since the fusion rules for $\text{Irr}(U^+_N)$ imply that $l \subset  n \otimes k$ iff $l = n+k - 2r$ for some $0 \le r \le \min\{n,k\}$, it follows (just as in the orthogonal case) that the sets $\{k: l \subset n \otimes k \}$ and $\{l: l \subset n \otimes k \}$ have cardinality at most $n +1$.  The rest of the argument follows  the proof of Proposition \ref{prop:qHaagerup} verbatim:  Take $y  = \sum_{k \in \N_0} y_k \in L_q(\F U_N)$ with $y_k = p_k y \in C_c(\F U_N)_k$.  Then $p_l(\alpha_q(\omega_x)y_k \ne 0$ only if $l \subset n \otimes k$, and therefore 

\begin{align*}
\|\alpha_q(\omega_x)(y)\|_q^q &= \sum_{l \in \N_0} \|p_l\alpha_q(\omega_x)(y)\|_q^q  = \sum_{l \in \N_0} \Big\|\sum_{k: l \subset n \otimes k} p_l \alpha_q(\omega_x)(y_k)\Big\|_q^q \\
&\le D_N^q \|x\|_q^q \sum_{l \in \N_0}\Big(\sum_{k: l \subset n \otimes k} \|y_k\|_q\Big)^q \quad (\text{by \eqref{eqn:block}})\\
& \le D_N^q \|x\|_q^q (n+1)^q\|y\|_q^q.
\end{align*}
\end{proof}

Using Theorem \ref{thm:unitaryRD}, we obtain a characterization of the positive definite functions on $\F U_N$ that extend to states on $C^*_p(\F U_N)$ which parallels the orthogonal
case (Theorem \ref{thm:characterization_p_continuity}).

\begin{thm}\label{thm:characterization_p_continuityUnitary}
Let $N \ge 3$ and $2 \le p < \infty$.  The following conditions are equivalent for a positive definite function $\varphi \in B(\F U_N)$: 
\begin{enumerate}
\item \label{FSU} $\varphi \in B_{L_p(\F U_N)}(\F U_N)$.
\item \label{supU} $\sup_{n \in \N_0} (n+1)^{-1}\|p_n\varphi\|_{p} < \infty$.
\item \label{lengthU} $(1 + \ell)^{-1 - \frac{2}{p}}\varphi \in L_p(\F U_N)$.
\item \label{weaklpU} $\varphi$ is weakly $L_p$.  I.e., $r^\ell \varphi \in L_p(\F U_N)$ for each $0 < r < 1$, where $(r^\ell)_{r \in (0,1)} \subset C_0(\F U_N)$ denotes the semigroup 
\[
r^{\ell} = \prod_{g \in \F_2^+} r^{\ell(g)}1_{\mc B(H_g)}.
\]
\end{enumerate}
\end{thm}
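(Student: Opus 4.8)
The plan is to mirror the proof of the orthogonal version, Theorem \ref{thm:characterization_p_continuity}, substituting the unitary Haagerup-type inequality of Theorem \ref{thm:unitaryRD} for Proposition \ref{prop:Lqestimate_operatornorm} at every step. As there, I would normalize $\|\varphi\|_{B(\F U_N)} = 1$ and prove the cycle \eqref{FSU} $\implies$ \eqref{supU} $\implies$ \eqref{lengthU} $\implies$ \eqref{weaklpU} $\implies$ \eqref{FSU}. The essential observation is that although $\Irr(U^+_N)$ is indexed by the free semigroup $\F_2^+$ rather than by $\N_0$, the length function $\ell$ collapses all the relevant data to the scalar sequence $(\|p_n\varphi\|_p)_n$, where $p_n = \sum_{\ell(g)=n}p_g$ is the central projection onto $C_c(\F U_N)_n$; all of the fusion-rule complexity has already been absorbed into Theorem \ref{thm:unitaryRD}.

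For \eqref{FSU} $\implies$ \eqref{supU} I would fix $n \in \N_0$ and set $x = p_n|\varphi|^{p-2}\varphi^* \in C_c(\F U_N)_n$. Using the traciality of the Haar weight $h$ (unimodularity), one computes $\|p_n\varphi\|_p^p = h(\varphi x) = \langle\omega_x,\varphi\rangle \le \|\omega_x\|_{C^*_p(\F U_N)}$; combining the estimate $\|\omega_x\|_{C^*_p(\F U_N)} \le D_N(n+1)\|x\|_q$ of Theorem \ref{thm:unitaryRD} with the non-commutative $L_p$--$L_q$ duality identity $\|x\|_q = \|p_n\varphi\|_p^{p-1}$ then yields $(n+1)^{-1}\|p_n\varphi\|_p \le D_N$. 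The implications \eqref{supU} $\implies$ \eqref{lengthU} and \eqref{lengthU} $\implies$ \eqref{weaklpU} are purely scalar estimates on $(\|p_n\varphi\|_p)_n$ that carry over verbatim: the first uses convergence of $\sum_n (n+1)^{-2}$, and the second chooses, for each fixed $r \in (0,1)$, a constant $C_r$ with $r^{pn} \le C_r(n+1)^{-p-2}$.

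The final implication \eqref{weaklpU} $\implies$ \eqref{FSU} needs an approximate identity $(\varphi_r)_{r\in(0,1)} \subset C_0(\F U_N)$ of positive definite functions whose decay matches the semigroup $(r^\ell)$, i.e. satisfying $C_1 r^\ell \le \varphi_r \le C_2 r^\ell$ on a range $t_0/N \le r < 1$. Once this family is available, the argument concludes exactly as in the orthogonal case: for $r$ near $1$ the hypothesis $r^\ell\varphi \in L_p(\F U_N)$, the domination $\varphi_r \le C_2 r^\ell$, and Lemma \ref{lem:Kac_result} show that the norm-one positive definite function $\varphi_r\varphi$ lies in $L_p(\F U_N) \cap A_{L_p(\F U_N)}(\F U_N)$; since $p_n\varphi_r\varphi \to p_n\varphi$ for each $n$, passing to the limit gives $|\langle\omega_x,\varphi\rangle| \le \|\omega_x\|_{C^*_p(\F U_N)}$ for all $x \in C_c(\F U_N)$, whence $\varphi \in B_{L_p(\F U_N)}(\F U_N)$.

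I expect the construction of the decaying approximate identity $(\varphi_r)$ to be the only genuinely new ingredient, and hence the main point to secure. In the orthogonal case this was provided in Remark \ref{rem:pd} through explicit Chebyschev-polynomial central multipliers together with the completely positive multiplier results of \cite{BrAP,DeFrYa}. Since \cite{DeFrYa} establishes the Haagerup property with quantitative exponential decay for the unitary free quantum groups as well, and since the structural correspondence between $\F O_F$ and $\F U_N$ already exploited in Theorem \ref{thm:unitaryRD} relates the two length functions, the same radial completely positive central multipliers on $L_\infty(U_N^+)$ yield positive definite functions $\varphi_r$ on $\F U_N$ obeying $C_1 r^\ell \le \varphi_r \le C_2 r^\ell$, with positive definiteness following from \cite[Theorem 6.4]{Da11}. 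With this family in hand, every remaining step is a direct transcription of the orthogonal proof.
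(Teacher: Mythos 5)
Your proposal is correct and follows essentially the same route as the paper: the implications \eqref{FSU}$\implies$\eqref{supU}$\implies$\eqref{lengthU}$\implies$\eqref{weaklpU} are transcribed verbatim from the orthogonal case with Theorem \ref{thm:unitaryRD} replacing Proposition \ref{prop:Lqestimate_operatornorm}, and the final implication rests on the exponentially decaying approximate unit of norm one positive definite functions, which the paper takes ready-made from \cite[Proposition 5.8]{BrAP} (the net $(\hat\Psi_t)_t$, with $\psi_r=\hat\Psi_{rN}$ satisfying $\psi_r\le Cr^{\ell}$). Note that only the upper bound $\varphi_r\le C_2 r^{\ell}$ together with the approximate-unit property is actually used in \eqref{weaklpU}$\implies$\eqref{FSU}, so your two-sided requirement $C_1r^{\ell}\le\varphi_r\le C_2r^{\ell}$ is harmless but stronger than what the cited construction is stated to provide.
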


\begin{proof}
The implications $\eqref{FSU} \implies \eqref{supU} \implies \eqref{lengthU} \implies \eqref{weaklpU}$ are proved exactly as in Theorem \ref{thm:characterization_p_continuity}.

\eqref{weaklpU} $\implies$ \eqref{FSU}.  In \cite[Section 5.2]{BrAP}, an approximate unit $(\psi_r)_{0 < \kappa < r < 1} \subset C_0(\F U_N)$ consisting of norm one positive definite functions was constructed with the property that $\psi_r \le Cr^{\ell}$ for some $C > 0$ (depending only on $N$) \cite[Proposition 5.8]{BrAP}. (Warning: In \cite{BrAP} this net
is denoted by $(\hat \Psi_t)_t$ where $t \in (t_0,N)$.  See equation (5.10) in \cite{BrAP}.  Here we are taking $\psi_r = \hat\Psi_{rN}$ where $\kappa = t_0/N < r < 1$).   Using the approximate unit $(\psi_r)_r$, we conclude that $\varphi \in B_{L_p(\F U_N)}(\F U_N)$ exactly as in the orthogonal case.
\end{proof}

We can now prove that the C$^\ast$-algebras $(C^*_p(\F U_N))_{p \in (2,\infty)}$ are all exotic.

\begin{cor} \label{cor:exoticU}
Let $N \ge 3$.
\begin{enumerate}
\item \label{surjectiveU} For each $2 \le p < p' <\infty$, the canonical quotient map \[C^*_{p'}(\F U_N) \to C^*_p(\F U_N)\] extending the identity map on $L_1(\F U_N)$ is not injective.  
\item \label{non_isoU} For each $2 <p < \infty$, the C$^\ast$-algebra $C^*_p(\F U_N)$ is not isomorphic (as a C$^\ast$-algebra) to either the universal C$^\ast$-algebra $C_u(U_N^+)$ or the reduced C$^\ast$-algebra $C(U_N^+)$. 
\end{enumerate}
\end{cor}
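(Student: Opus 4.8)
The plan is to transcribe the proof of Theorem~\ref{thm:free_orthogonal} almost word for word, replacing the orthogonal characterization Theorem~\ref{thm:characterization_p_continuity} by its unitary counterpart Theorem~\ref{thm:characterization_p_continuityUnitary} (whose proof already supplies the required Haagerup-type input via Theorem~\ref{thm:unitaryRD}). The only genuinely new ingredient is a unitary replacement for the ratio test Lemma~\ref{lem:ratiotest}, governing the series that detects weak $L_p$-membership. Writing $d_g = \dim H_g$ and $D_n := \sum_{\ell(g) = n} d_g^2$, I first note that since $p_n = \sum_{\ell(g)=n} p_g$ is a projection and $h = \bigoplus_g d_g\Tr_g$ is a trace, we have $\|p_n\|_p^p = \sum_{\ell(g)=n} d_g \Tr_g(p_g) = D_n$ for every $1 \le p < \infty$. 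Hence, for a positive definite $\varphi$ comparable to $r^\ell$, the norms $\|r^\ell \varphi\|_p^p$ are, up to constants, the tails of $\sum_n (rs)^{pn} D_n$, exactly as the orthogonal series $\sum_n (rr_0)^{pn} S_n(N)^2$.

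Next I would pin down the growth rate. Set $\mu := \limsup_n D_n^{1/n}$; I claim $N^2 \le \mu \le 2N^2$, so in particular $1 < \mu < \infty$. For the upper bound there are $2^n$ irreducibles of length $n$ (the words of length $n$ in $\F_2^+$), and each is a subrepresentation of an $n$-fold tensor product of copies of $\hat U$ and $\overline{\hat U}$, both of dimension $N$, whence $d_g \le N^n$ and $D_n \le 2^n N^{2n}$. For the lower bound, the word $\hat U^{\otop n}$ is irreducible (the rule $g\hat U \otop \hat U h = g\hat U\hat U h$ produces no cancellation), so $d_{\hat U^{\otop n}} = N^n$ and $D_n \ge N^{2n}$. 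Consequently the power series $\sum_n D_n x^n$ has radius of convergence $\mu^{-1} \in (0,1)$, and for $0 < s < 1$ the series $\sum_n s^{pn} D_n$ converges when $s < \mu^{-1/p}$ and diverges when $s > \mu^{-1/p}$. This is the exact analogue of Lemma~\ref{lem:ratiotest}, the explicit orthogonal threshold $\rho^{-2/p}$ being replaced by $\mu^{-1/p}$; crucially, no closed form for $\mu$ is needed, only $1 < \mu < \infty$ (although a transfer-matrix computation over the runs of the words expresses $\mu$ as the dominant root of an explicit cubic in $N$).

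For part~\eqref{surjectiveU}, the quotient $C^*_{p'}(\F U_N) \to C^*_p(\F U_N)$ is injective iff $B_{L_p(\F U_N)}(\F U_N) = B_{L_{p'}(\F U_N)}(\F U_N)$, so it suffices to exhibit a positive definite element of $B_{L_{p'}(\F U_N)}(\F U_N)\setminus B_{L_p(\F U_N)}(\F U_N)$. Since $\mu > 1$ we may fix $r_0$ with $\mu^{-1/p} < r_0 < \mu^{-1/p'} < 1$, and take the positive definite function $\psi_{r_0} \in C_0(\F U_N)$ from the approximate unit of \cite{BrAP} used in the proof of Theorem~\ref{thm:characterization_p_continuityUnitary}, which satisfies a two-sided estimate $C_1 r_0^\ell \le \psi_{r_0} \le C_2 r_0^\ell$. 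As $rr_0 < r_0 < \mu^{-1/p'}$ for every $r \in (0,1)$, the growth lemma gives $\|r^\ell \psi_{r_0}\|_{p'} < \infty$, so $\psi_{r_0}$ is weakly $L_{p'}$ and lies in $B_{L_{p'}(\F U_N)}(\F U_N)$ by Theorem~\ref{thm:characterization_p_continuityUnitary}. Choosing instead $r$ with $\mu^{-1/p} r_0^{-1} < r < 1$ forces $rr_0 > \mu^{-1/p}$, so $\|r^\ell \psi_{r_0}\|_p = \infty$ by the lower estimate and the divergence half of the lemma; thus $\psi_{r_0}$ is not weakly $L_p$ and $\psi_{r_0} \notin B_{L_p(\F U_N)}(\F U_N)$.

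For part~\eqref{non_isoU}, to exclude $C^*_p(\F U_N) \cong C_u(U_N^+)$ I would show $C^*_p(\F U_N)$ has no nontrivial character, whereas $C_u(U_N^+)$ (which surjects onto the commutative $C(U_N)$) has many. If $\pi$ were such a character, its implementing unitary $U_\pi \in L_\infty(\F U_N)$ would give a positive definite coefficient in $B_{L_p(\F U_N)}(\F U_N)$; since $p_n U_\pi$ is unitary in $p_n L_\infty(\F U_N)$ we get $\|p_n U_\pi\|_p^p = \|p_n\|_p^p = D_n$, so weak $L_p$-membership would force $\sum_n r^{pn} D_n < \infty$ for all $r \in (0,1)$, contradicting divergence for $r \in (\mu^{-1/p},1)$. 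The non-isomorphism $C^*_p(\F U_N) \ncong C(U_N^+)$ then follows because $C(U_N^+)$ is simple for $N \ge 3$ by \cite{VaVe}, while $C^*_p(\F U_N)$ is not simple by part~\eqref{surjectiveU}. The main point to be checked carefully is the growth lemma: unlike the orthogonal case, the fusion ring of $U_N^+$ is noncommutative with exponentially many irreducibles per length and admits no tidy closed-form threshold, and one must verify both that the crude bound $N^{2n} \le D_n \le (2N^2)^n$ already determines $\limsup_n D_n^{1/n}$ sharply enough to separate $p$ from $p'$, and that the \cite{BrAP} net obeys the two-sided comparison $C_1 r^\ell \le \psi_r \le C_2 r^\ell$ needed for the lower estimate above.
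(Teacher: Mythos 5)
Your part (2) is correct and coincides with the paper's argument (a character would give a unitary positive definite function in $B_{L_p(\F U_N)}(\F U_N)$, whose weak $L_p$-norm $\sum_n r^{pn}D_n$ diverges for $r$ near $1$; the reduced side is handled by simplicity of $C(U_N^+)$ together with part (1)). Your growth estimates $N^{2n}\le D_n\le 2^nN^{2n}$ and the root-test dichotomy for $\sum_n s^{pn}D_n$ are also correct as stated.

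The gap is in part (1), at exactly the point you flag at the end. To conclude that $\psi_{r_0}$ is \emph{not} weakly $L_p$ you need the lower half of the two-sided comparison $C_1 r_0^\ell\le\psi_{r_0}\le C_2 r_0^\ell$ uniformly over all of $\Irr(U_N^+)$: divergence of $\sum_n (rr_0)^{pn}D_n$ says nothing about $\|r^\ell\psi_{r_0}\|_p$ without it. But only the upper bound $\psi_r\le Cr^\ell$ is supplied by \cite[Proposition 5.8]{BrAP}, and that is all the paper quotes; on a general word $g\in\F_2^+$ the value of $\psi_r$ comes from a product over blocks in the free-complexification picture, and no uniform lower bound of the form $C_1r^{\ell(g)}$ is established there. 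The paper's proof is engineered precisely to avoid this: it multiplies $\psi_{r_0}$ by the central projection $p_{\hat{\mathbb H}}$ onto the quantum subgroup with $\Irr(\hat{\mathbb H})=\{(\hat U\overline{\hat U})^n\}_{n}$ (positive definite by \cite{Ve04} and \cite{Da11}, so the product is again positive definite), obtaining $\tilde\psi_{r_0}$ supported on the alternating words only, where the explicit formula $p_{g_{2n}}\tilde\psi_{r_0}=\frac{S_{2n}(r_0N)}{S_{2n}(N)}1$ restores the two-sided comparison and reduces the convergence question to the even-indexed orthogonal series of Lemma \ref{lem:ratiotest}, with the same threshold $\rho^{-2/p}$ as in Theorem \ref{thm:free_orthogonal}. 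So your route closes only if you either prove the uniform lower bound for $\psi_r$ across all of $\Irr(U_N^+)$ (not available in the cited sources), or cut down by $p_{\hat{\mathbb H}}$ as the paper does, in which case your constant $\mu$ is not needed.
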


\begin{proof}
\eqref{surjectiveU}.  We recall from \cite{DaKaSkSo} the notion of a (closed) quantum subgroup $\mathbb H$ a discrete quantum group $\G$.  In particular, $\mathbb H$ is discrete, $\Irr(\hat {\mathbb H}) \subseteq \Irr(\hat \G)$ and $L_\infty(\mathbb H)$ can be identified with the subalgebra $p_{\hat{\mathbb H}}L_\infty(\G)$, where $p_{\hat{\mathbb H}} = \sum_{\alpha \in \Irr(\hat {\mathbb H})}p_\alpha$.  Moreover it follows from \cite[Proposition 2.2]{Ve04} that $p_{\hat{\mathbb H}}$ is implements a unital completely positive multiplier of $L_1(\hat \G)$ in the sense of Remark \ref{rem:pd}.  In particular, $p_{\hat{\mathbb H}}$ is a norm one positive definite function on $\G$ by \cite[Theorem 6.4]{Da11}.  

Now let $\mathbb H$ be the quantum subgroup of $\F U_N$ given by 
\[\Irr(\hat {\mathbb H}) = \{g_{2n}\}_{n \in \N_0} \subset \F_2^+  \quad \text{where} \quad g_{2n} = (\hat U \overline{\hat U})^n.\]  Let $(\psi_r)_{0 < \kappa \le r < 1} \subset C_0(\F U_N)$ be the net of norm one positive definite functions constructed \cite[Proposition 5.8]{BrAP}. Let $\tilde{\psi}_r = \psi_r p_{\hat{\mathbb H}}$.  Then $\tilde{\psi}_r \in B(\F U_N)$ is a norm one positive definite function, and the description of $\Irr(U_N^+)$ given in \cite[Proposition 4.3]{VeVo} gives the following expression for $\tilde{\psi}_r$:
\[p_g \tilde\psi_r = \Bigg\{ \begin{matrix} \frac{S_{2n}(rN)}{S_{2n}(N)}1_{\mc B(H_{g_{2n}})} & \text{if } g = g_{2n} \in \Irr(\hat {\mathbb H})  \\
0 & \text{otherwise}\end{matrix} \qquad (g \in \F_2^+). \]
    
Now fix $2 \le p < p' <\infty$ and consider the positive definite function \[\varphi_{r_0}  = \prod_{n \in \N_0} \frac{S_{2n}(r_0N)}{S_{2n}(N)}1_{\mc B(H_n)}\in B_{L_{p'}(\F O_N)}(\F O_N)\backslash B_{L_{p}(\F O_N)}(\F O_N)\]  constructed in the proof of Theorem \ref{thm:free_orthogonal} \eqref{surjectiveU}.  Let $\tilde \psi_{r_0}$ be the corresponding positive definite function on $\F U_N$.    Since we have $\dim(H_{g_{2n}}) = \dim (H_{2n})$, where $H_{2n}$ is the irreducible representation of $\F O_N$ with label $2n$, it follows from an easy modification of Lemma \ref{lem:ratiotest} (considering only even summands) that $\tilde \psi_{r_0}$ is weakly $L_q$ (for some $1 \le q < \infty$) if and only if $\varphi_{r_0}$ is weakly $L_q$.  Since $\varphi_{r_0}$ is weakly $L_{p'}$ but not weakly $L_{p}$, we conclude from Theorem \ref{thm:characterization_p_continuityUnitary} that $\tilde \psi_{r_0} \in B_{L_{p'}(\F U_N)}(\F U_N)\backslash B_{L_{p}(\F U_N)}(\F U_N)$.  I.e., the canonical quotient $C^*_{p'}(\F U_N) \to C^*_{p}(\F U_N)$ is not injective.      

\eqref{non_isoU}.  The argument here is the same as the orthogonal case. Fix $2 < p < \infty$.   The non-isomorphism $C^*_{p}(\F U_N) \ncong C(U^+_N)$ follows from the simplicity of $C(U^+_N)$ \cite[Theorem 3]{Ba} and the non-simplicity of $C^*_{p}(\F U_N)$ by \eqref{surjectiveU}.  The non-isomorphism $C^*_{p}(\F U_N) \ncong C_u(U^+_N)$ follows from the fact that $C^*_p(\F U_N)$ has no characters.  Indeed, any character on $C^*_p(\F U_N)$ corresponds to a weakly $L_p$ positive definite function $\varphi \in B(\F U_N) \cap \mc U (L_\infty(\F U_N))$, and such a $\varphi$ must satisfy
  \[\|r^\ell \varphi\|_{p}^p  = \sum_{g \in \F_2^+} r^{\ell(g)p} \dim(H_g)^2 < \infty \qquad (0 < r < 1). \]
Since the above sum diverges for $r$ close to $1$, no such $\varphi$ can exist.
\end{proof}

\begin{rem}
At this time we are unable to prove the uniqueness of trace for the C$^\ast$-algebras $(C^*_p(\F U_N))_{p \in (2,\infty)}$.  We suspect that a conjugation by generators approach similar to the orthogonal case should work here as well. 
\end{rem}

\end{document}